\numberwithin{equation}{section}
\newtheorem{defi}{Definition}[section]
\newtheorem{lem}[defi]{Lemma}
\newtheorem{theo}[defi]{Theorem}
\newtheorem{cor}[defi]{Corollary}
\newtheorem{pro}[defi]{Proposition}
\newtheorem{rem}[defi]{Remark}
\DeclareMathOperator{\divop}{div}
\DeclareMathOperator{\diam}{diam}
\DeclareMathOperator{\RR}{\mathbb{R}}
\newcommand{\cT}{{\mathcal T}}
\title[Cucker-Smale model with singular matrix-valued communication]{Measure solutions to a kinetic Cucker-Smale model with singular and matrix-valued communication}
\author{Jan Peszek}
\address{Institute of Applied Mathematics and Mechanics, University of Warsaw, ul. Banacha 2, 02-097 Warszawa, Poland}
\email{j.peszek@mimuw.edu.pl}
\author{David Poyato}
\address{Institut Camille Jordan (ICJ), UMR 5208 CNRS \& Universit\'e Claude Bernard Lyon 1, 69100 Villeurbanne, France, and Research Unit ``Modeling Nature'' (MNat), Universidad de Granada, Granada, 18071, Spain}
\email{poyato@math.univ-lyon1.fr}
\begin{document}

\date{\today}

\subjclass[2020]{35B40, 35D30, 35L81, 35Q70, 35Q83} 
\keywords{Cucker-Smale model, Kinetic equations, Alignment models, Singular interactions, Matrix-valued communication, Measure-valued solutions}

\thanks{\textbf{Acknowledgment.}  This work has been partially supported by the Polish National Science Centre grant No 2018/31/D/ST1/02313 (SONATA) (JP), by the European Research Council under the European Union's Horizon 2020 research and innovation program grant agreement No 865711 (DP), and by the Spanish projects RTI2018-098850-B-I00 from the MINECO-Feder and P18-RT-2422 \& A-FQM-311-UGR18 from the Junta de Andalucia (DP)}

\begin{abstract}
 We introduce a multi-dimensional variant of the kinetic Cucker-Smale model with singular and matrix-valued communication weight, which reduces to the singular kinetic Cucker-Smale equation in the one-dimensional case. We propose an appropriate notion of weak measure-valued solution to this second-order system and a suitable first-order reduction, which persist beyond the blow-up time in classical norms. The core of the paper is to show that both formulations are equivalent in this singular regime, thus extending the previous results from regular to weakly singular communication weights. As a consequence, we obtain: (i) global-in-time well-posedness of weak measure-valued solutions, (ii) quantitative convergence rates to equilibrium, and  (iii) uniform-in-time mean-field limits thanks to the above-mentioned equivalence and our recent work on fibered gradient flows.
\end{abstract}

\maketitle

\section{Introduction}

Mathematical models of collective dynamics are widely used to describe a variety of phenomena ranging from aggregation of bacteria and flocking of birds to opinion dynamics, distribution of goods and traffic flows. They serve as an inspiration for new directions of mathematical development. Among such models is the Cucker-Smale (CS) model of self-propelled particles with a tendency to flock \cite{CS-07}. It has garnered significant attention in the last decade owing to its relative simplicity and an interesting structure reminiscent of classical models of fluid mechanics. It has been studied in a variety of directions including asymptotics and large-time behavior \cite{HL-09, PKH-10, PGE-09}, influence of additional forces \cite{BH-17, BCC-11, HHK-10} and control \cite{AKMO-20, BBCK-18, CKPP-19}. Other directions include collision-avoidance \cite{CCMP-17, CD-10, D-21} and propagation of finite-range interactions \cite{J-18, MPT-19}.

Throughout this paper, we consider the following agent-based CS-type system of alignment dynamics with matrix-valued communication
\begin{align}\label{sysp}
\begin{aligned}
&\dot{x}_i=v_i,\quad  i\in\{1,..., N\},\\
&\dot{v}_i\displaystyle=\frac{1}{N}\sum_{j=1}^N D^2 W(x_i-x_j)(v_j-v_i),\\
&(x_i(0),v_i(0))  = (x_{i0},v_{i0}) \in \RR^{2d},
\end{aligned}
\end{align}
where $N$ is the total number of agents, while $x_i(t),v_i(t)\in \mathbb{R}^d$ denote the position and velocity of the $i$th agent at time $t\geq 0$, respectively. Above, $W$ is a super-linear kernel given by
\begin{equation*}
    W(x)=\frac{1}{2-\alpha}\frac{1}{1-\alpha}|x|^{2-\alpha}, \quad x\in \mathbb{R}^d,
\end{equation*}
for the parameter $\alpha\in(0,1)$. Therefore, its first and second-order derivatives take the form
\begin{align*}
\nabla W(x) &= \frac{1}{1-\alpha}\phi(|x|)\,x,\\
D^2 W(x) &= \frac{1}{1-\alpha}\phi(|x|)\left(I_d-\alpha \frac{x}{|x|}\otimes \frac{x}{|x|}\right),
\end{align*}
where $\phi$ can be viewed as a weakly singular communication weight defined as
\begin{equation}\label{ker}
\phi(r):=\frac{1}{r^\alpha},\quad r>0,\ \alpha\in(0,1).
\end{equation}

 Alignment models with matrix-valued communication similar to \eqref{sysp} have been recently studied by {\sc Shu} and  {\sc Tadmor} \cite{ST-21} and by {\sc Kim} \cite{K-22} and can be viewed as multi-D generalization of the 1D CS flocking model and an alternative to the classical multidimensional CS model \cite{CS-07}.
The large-crowd regime with $N\gg 1$ in \eqref{sysp} is often expressed by a kinetic model. For the classical CS model with regular communication weight (including some singular cases), the kinetic model can be derived as mean-field limit of \eqref{sysp} as $N\rightarrow \infty$, see for instance \cite{HL-09, MP-18}. Our approach is to perform the mean-field limit on the so-called first-order reduction of \eqref{sysp}, leading ultimately to the following kinetic equation
\begin{align}\label{sysk}
\begin{aligned}
&\partial_tf + v\cdot \nabla_xf + {\rm div}_v(\boldsymbol{F}[f]\,f) = 0,\quad t\geq 0,\ (x,v)\in \mathbb{R}^{2d},\\
&\boldsymbol{F}[f_t](x,v) :=  \int_{\RR^{2d}} D^2W(x-x')\,(v'-v)\, f_t(x',v')\,dx'\,dv',\\
&f_{t=0} = f_0,
\end{aligned}
\end{align}
 where $\boldsymbol{f}=f_t(x,v)$ is the density of particles with position $x\in \RR^d$ and velocity $v\in\RR^d$ at time $t\geq 0$. With $d=1$ system \eqref{sysp} is trivially reduced to the 1D weakly singular CS model
\begin{align}\label{csp}
\begin{aligned}
&\dot{x}_i=v_i,\quad i\in \{1,\ldots,N\},\\
&\dot{v}_i=\frac{1}{N}\sum_{j=1}^N\phi(|x_j-x_i|)(v_j-v_i),\\
& (x_i(0),v_i(0))=(x_{i0},v_{i0}),
\end{aligned}
\end{align}
whose associated kinetic model is \eqref{sysk} with $\boldsymbol{F}$ replaced by 
\begin{equation*}
    \boldsymbol{F}_{\text{CS}}[f_t](x,v):= \int_{\mathbb{R}^{2d}} \phi(|x-x'|)(v'-v)\,f_t(x',v')\,dx'\,dv'.
\end{equation*}
Thus, the systems \eqref{sysp} and \eqref{sysk} can be viewed as a multidimensional generalization of the 1D weakly singular CS model. 

The goal of the paper is twofold. First we propose a notion of weak measure-valued solutions to \eqref{sysk} that persist after eventual emergence of singularities, and we prove that under suitable assumptions they are equivalent to weak measure-valued solutions of a multidimensional Kuramoto-type equation investigated recently in \cite{PP-22-1-arxiv}. Then, employing the machinery developed in \cite{PP-22-1-arxiv} and earlier, by the second author in \cite{P-19-arxiv}, we discuss global-in-time well-posedness and uniform stability estimates for system \eqref{sysk}, in particular obtaining convergence to the equilibrium and the mean-field limit.
This contribution falls in line with the mainstream drive to prove well-posedness for the singular CS model in its mesoscopic, kinetic variant. What makes our work distinguishable (particularly from \cite{K-22} discussed below) is that we are the first to do it for weakly singular interactions \eqref{ker} despite the fact that collision-avoidance does not hold in this regime \cite{P-14}, and our approach is applicable to the multidimensional case and provides actual equivalence on the level of weak measure-valued formulations.

\subsection{Main result}

 The starting point of our considerations is  the change of variables
\begin{align}\label{trans}
\omega = v+\nabla W*\rho_t(x),\quad \mbox{with}\quad \rho_t(x):=\int_{\RR^d} f_t(x,v)\,dv,
\end{align}
which transforms \eqref{sysk} into the kinetic Kuramoto-type equation
\begin{align}\label{kurak}
\begin{aligned}
&\partial_t\mu + {\rm div}_x(\boldsymbol{u}[\rho]\,\mu) = 0,\quad t\geq 0,\ (x,\omega)\in \mathbb{R}^{2d},\\
&\boldsymbol{u}[\rho_t](x,\omega):=\omega-\nabla W*\rho_t(x),\quad \rho_t(x):=\int_{\mathbb{R}^d}\mu_t(x,\omega)\,d\omega,\\
&\mu_{t=0}=\mu_0,
\end{aligned}
\end{align}
satisfied by the probability density
\begin{align*}
\mu_t(x,\omega): = f_t(x,\omega-\nabla W*\rho_t(x)),\quad t\geq 0,\ (x,\omega)\in \mathbb{R}^{2d}.
\end{align*}
From a rigorous point of view, the above transformation and definition of $\boldsymbol{\mu}$ only makes sense under the assumption that everything is smooth, but we define it below in Definition \ref{D-change-variables} for general Radon measures as a push-forward of $\boldsymbol{f}$ along the mapping
$$\mathcal{T}^{2\to 1}[\rho_t](x,v):=(x,v+\nabla W*\rho_t(x)), \quad t\geq 0,\ (x,v)\in \mathbb{R}^{2d},$$
and, conversely, we define $\boldsymbol{f}$ as a push-forward of $\boldsymbol{\mu}$ along
$$\mathcal{T}^{1\to 2}[\rho_t](x,\omega):=(x,\omega-\nabla W*\rho_t(x)), \quad t\geq 0,\ (x,\omega)\in \mathbb{R}^{2d}.$$

Note that in the continuity equation in \eqref{kurak} the divergence is taken only with respect to the variable $x$. Such type of problems have been extensively studied by the authors recently in \cite{PP-22-1-arxiv}, wherein similar continuity equations were recognized as gradient flows with respect to the so-called {\it fibered Wasserstein metric}. From the perspective of our present considerations the key observation remarked in \cite{PP-22-1-arxiv} is that each time-slice $\mu_t$ of any solution $\boldsymbol{\mu}$ to \eqref{kurak} can be disintegrated with respect to its $\omega$-marginal $\nu$ and represented as
\begin{equation}\label{disint}
    \int_{\RR^{2d}}\psi(x,\omega)\,d\mu_t(x,\omega) = \int_{\RR^{d}} \left(\int_{\RR^d}\psi(x,\omega)\,d\mu_t^\omega(x)\right)d\nu(\omega),
\end{equation}
for all Borel-measurable functions $\psi:\RR^{2d}\longrightarrow [0,+\infty)$, see Theorem \ref{dis} in the appendix. In addition, one has that the $\omega$-marginal $\nu$ does not change in time, and the family of probability measures $\{\mu_t^\omega\}_{\omega\in\RR^d}$ are defined $\nu$-almost everywhere and verify
\begin{align*}
&\partial_t\mu^\omega+\divop_x(\boldsymbol{u}[\rho](\cdot,\omega)\,\mu^\omega),\quad t\geq 0,\ x\in \mathbb{R}^d,\\
&\boldsymbol{u}[\rho_t](x,\omega)=\omega-\int_{\mathbb{R}^d}\nabla W*\mu_t^{\omega'}(x)\,d\nu(\omega'),\\
&\mu_{t=0}^\omega=\mu_0^\omega,
\end{align*}
for $\nu$-a.e. $\omega\in \mathbb{R}^d$. Therefore, roughly speaking, \eqref{kurak}  amounts to a family of continuity equations depending on a parameter $\omega$, which is distributed according to the prescribed probability measure $\nu$. However, we remark that the interplay between different $\omega$-fibers of $\{\mu_t^\omega\}_{\omega\in\RR^d}$ is highly complex since the velocity field $\boldsymbol{u}[\rho_t]$ is reconstructed as an average of the combined effects of all fibers simultaneously. Keeping in mind the special role of the marginal $\nu$ we state our main result.

\begin{theo}[Equivalence]\label{main1}
 Let $T>0$, $\alpha\in(0,1)$ be given.  Let $\boldsymbol{\mu}$ be a weak measure-valued solution to $\eqref{kurak}$ in the sense of Definition \ref{D1st} (see below) with a prescribed $\omega$-marginal $\nu$. Then $f_t = (\cT^{1\to 2}[\rho_t])_{\#}\mu_t$  is a solution to \eqref{sysk} in the sense of Definition \ref{DCS} (see below) provided that
\begin{align}\label{nudiag0}
    \int_{\RR^{2d}}|\omega-\omega'|^\frac{2-3\alpha}{1-\alpha}\, d(\nu\otimes\nu)<\infty.
\end{align}
Conversely, if $\boldsymbol{f}$ is a solution to \eqref{sysk} in the sense of Definition \ref{DCS}, then $\mu_t = (\cT^{2\to 1}[\rho_t])_{\#}f_t$ has a time-independent probabilistic $\omega$-marginal $\nu$ and it is a solution to $\eqref{kurak}$ in the sense of Definition \ref{D1st} provided that the a priori estimate
\begin{align}\label{nudiagf0}
    \sup_{t\in[0,T]}\int_{\RR^{4d}}\left|(v+\nabla W*\rho_t(x))-(v'+\nabla W*\rho_t(x'))\right|^\frac{2-3\alpha}{1-\alpha}\, d(f_t\otimes f_t)< +\infty
\end{align}
holds true.
\end{theo}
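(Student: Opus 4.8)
The plan is to establish the equivalence by carefully transferring the weak formulation of one equation into the other along the push-forward maps $\mathcal{T}^{2\to 1}$ and $\mathcal{T}^{1\to 2}$, with the integrability conditions \eqref{nudiag0}–\eqref{nudiagf0} playing the role of making all the relevant integrals absolutely convergent. First I would fix the admissible class of test functions for both Definitions \ref{D1st} and \ref{DCS} and record the basic change-of-variables identities: if $\mu_t = (\mathcal{T}^{2\to 1}[\rho_t])_\# f_t$ then $\rho_t$ (the $x$-marginal) is the same for both, the $\omega$-marginal $\nu$ of $\mu_t$ coincides with the image of $f_t$ under $(x,v)\mapsto v+\nabla W*\rho_t(x)$, and for any test function $\psi=\psi(x,\omega)$ one has $\int \psi\,d\mu_t = \int \psi(x,v+\nabla W*\rho_t(x))\,df_t$. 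The heart of the argument is then to differentiate this identity in time and check that the $\partial_t f + v\cdot\nabla_x f + \mathrm{div}_v(\boldsymbol{F}[f]f)=0$ weak formulation is carried exactly onto the $\partial_t\mu + \mathrm{div}_x(\boldsymbol{u}[\rho]\mu)=0$ weak formulation, using the chain rule $\frac{d}{dt}\nabla W*\rho_t$ together with the continuity equation for $\rho_t$, and the algebraic identity $\boldsymbol{u}[\rho_t](x,v+\nabla W*\rho_t(x)) = v$ which collapses the transport part, while the force term $\boldsymbol{F}[f]$ reorganizes into $-\frac{d}{dt}\nabla W*\rho_t$ along characteristics.

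The delicate point — and the reason the moment bounds \eqref{nudiag0}–\eqref{nudiagf0} appear — is that none of this is legitimate unless the velocity field and the force are well-defined and integrable against the measures. The force $\boldsymbol{F}[f_t](x,v)$ involves $D^2W(x-x') = \frac{1}{1-\alpha}\phi(|x-x'|)(I_d - \alpha\,\hat{x}\otimes\hat{x})$, which is singular of order $|x-x'|^{-\alpha}$; pairing it with $|v'-v|$ and integrating against $f_t\otimes f_t$ is exactly the kind of quantity that \eqref{nudiagf0} controls after the substitution $\omega = v+\nabla W*\rho_t(x)$, because in the $\omega$-variables the relevant singular quantity becomes $|\nabla W*\rho_t(x) - \nabla W*\rho_t(x')|$ weighted by a power of $|x-x'|$, and one estimates $|\nabla W(z)| \lesssim |z|^{1-\alpha}$ so that the product behaves like $|x-x'|^{1-2\alpha}$ near the diagonal — integrable against the measure precisely when the exponent $\frac{2-3\alpha}{1-\alpha}$ moment is finite (this is the exponent coming from Hölder-interpolating the singular kernel against the difference of the first-order terms). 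So the second step is a set of quantitative lemmas: (a) under \eqref{nudiag0}, $\nabla W*\rho_t \in W^{1,1}_{\mathrm{loc}}$ with the relevant fibered-difference moment finite, so $\boldsymbol{u}[\rho]$ is an admissible (in the sense of \cite{PP-22-1-arxiv}) velocity field and the maps $\mathcal{T}^{2\to1},\mathcal{T}^{1\to2}$ are well-defined Borel maps that are mutual inverses; (b) the analogous bound under \eqref{nudiagf0} on the $\boldsymbol{f}$ side; (c) continuity-in-time of $t\mapsto\nabla W*\rho_t$ in a suitable sense, which is needed to make $\mathcal{T}^{1\to2}[\rho_t]$ depend measurably/continuously on $t$ and to justify the time-derivative manipulation.

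With those tools in place, the two implications are symmetric. For the forward direction ($\boldsymbol{\mu}\to\boldsymbol{f}$), I would take $\varphi=\varphi(t,x,v)$ admissible for Definition \ref{DCS}, set $\psi(t,x,\omega) := \varphi(t,x,\omega-\nabla W*\rho_t(x))$, verify $\psi$ is admissible for Definition \ref{D1st} (here the regularity of $\nabla W*\rho_t$ and bound \eqref{nudiag0} enter to control $\nabla_x\psi$), plug $\psi$ into the weak formulation of \eqref{kurak}, and unwind the time derivative: $\partial_t\psi = \partial_t\varphi + \nabla_v\varphi\cdot\partial_t(-\nabla W*\rho_t)$ and $\boldsymbol{u}[\rho]\cdot\nabla_x\psi = v\cdot\nabla_x\varphi - (v\cdot\nabla_x)(\nabla W*\rho_t)\cdot\nabla_v\varphi$; combining and using the continuity equation $\partial_t\rho_t + \mathrm{div}_x(j_t) = 0$ for the momentum-type current to identify $\partial_t\nabla W*\rho_t + (v\cdot\nabla_x)\nabla W*\rho_t = -\boldsymbol{F}[f_t]$ at the level of the measure, one recovers exactly the weak form of \eqref{sysk}. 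The converse is the same computation read backwards, using \eqref{nudiagf0} instead, plus the observation that pushing $f_t$ forward automatically yields a measure whose $\omega$-marginal is transported by a divergence-free-in-$\omega$ field, hence constant in $t$ — which is where ``$\nu$ is time-independent'' comes from. The main obstacle I anticipate is step (c) combined with the rigorous justification that $\boldsymbol{F}[f_t]$, a priori only defined $f_t$-a.e. as a singular integral, can legitimately be identified with $-\frac{d}{dt}\nabla W*\rho_t$ along the flow — i.e. commuting the singular convolution with the time derivative — which will require the uniform moment bound \eqref{nudiagf0} (respectively \eqref{nudiag0}) and a careful density/truncation argument rather than any formal chain rule.
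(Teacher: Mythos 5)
Your overall strategy --- transfer test functions along $\mathcal{T}^{1\to2}$ and $\mathcal{T}^{2\to1}$, use the algebraic identity $\boldsymbol{u}[\rho_t]\circ\mathcal{T}^{2\to1}[\rho_t]=v$ to collapse the transport, and control the resulting singular integrals via the moment conditions --- is the same as the paper's (Propositions~\ref{PCSexi} and~\ref{2ndto1st}). However, there are two genuine gaps in the plan as written.

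First, you propose to test \eqref{kurak} directly with $\psi_t(x,\omega)=\varphi_t(x,\omega-\nabla W*\rho_t(x))$, asserting that ``the regularity of $\nabla W*\rho_t$ and bound \eqref{nudiag0} enter to control $\nabla_x\psi$.'' This step fails: $\nabla W*\rho_t$ is in general only $(1-\alpha)$-H\"older in $x$, not $C^1$, so $\nabla_x\psi$ need not exist and $\psi$ is not an admissible test function for Definition~\ref{D1st}; a moment bound like \eqref{nudiag0} gives no pointwise differentiability. The paper's device is to first mollify the potential, replacing $W$ by $W_\varepsilon$ as in \eqref{epep} so that $\nabla W_\varepsilon\in C^1$ with bounded Hessian, then work with $\eta_t(x,\omega)=\psi_t(x,\boldsymbol{u}_\varepsilon[\rho_t](x,\omega))$ (and symmetrically on the converse side), and finally pass to the limit $\varepsilon\to0$ by dominated convergence. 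The exponent $\frac{2-3\alpha}{1-\alpha}$ arises precisely from this domination (Lemma~\ref{gepg}): near the diagonal in $x$, both the mollified and unmollified velocity differences are $\lesssim |x-x'|^{1-\alpha}$, so the singular product behaves like $|x-x'|^{2-3\alpha}$ (not $|x-x'|^{1-2\alpha}$ as in your heuristic, since there are \emph{two} velocity-difference factors), which for $\alpha>\tfrac23$ is dominated by $|\omega-\omega'|^{\frac{2-3\alpha}{1-\alpha}}$ and therefore needs \eqref{nudiag0}; away from the diagonal the dominating function is the dissipation density itself.

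Second, even granting the domination, you need pointwise convergence of the $\varepsilon$-dependent integrand $\lambda$-a.e., where $\lambda=\mu_t\otimes\mu_t\otimes\mathcal{L}^1_{\mathlarger{\llcorner}[0,T]}$, and the problematic set is exactly the collision set $\{x=x',\,\omega\ne\omega'\}$. Nothing in your plan addresses why this set is negligible or why the energy inequality \eqref{weakeqfa} holds for the transformed measure. The paper proves both via Lemma~\ref{enstrophy}: an energy dissipation inequality for $\boldsymbol{\mu}$ in the $\boldsymbol{u}[\rho]$-variables, which is far from immediate in the singular regime. Its proof is a self-contained piece of work (Section~\ref{sec:energest}): first for atomic solutions by carefully tracking sticking and collision times of the particle system, then for general measures via a mean-field limit combined with lower semicontinuity of the total dissipation functional (Lemma~\ref{Dlsc}). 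This apparatus is an essential precursor to the proof of the equivalence and is entirely absent from your proposal.
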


Together with well-posedness results from \cite{PP-22-1-arxiv} such as Theorem 5.3 or Theorem B, the above equivalence yields the following theorem.

\begin{theo}[Well posedness]\label{main2}
Let $f_0\in {\mathcal P}_2(\RR^{2d})$ satisfy the assumption
\begin{equation}\label{nudiag0t0}
    \int_{\RR^{4d}}\left|(v+\nabla W*\rho_0(x))-(v'+\nabla W*\rho_0(x'))\right|^\frac{2-3\alpha}{1-\alpha}\, d(f_0\otimes f_0)< +\infty.
\end{equation}
Then, there exists a solution to \eqref{sysk} in the sense of Definition \ref{DCS}. This solution is unique in the class of solutions satisfying additionally \eqref{nudiagf0} in $[0,T]$.

\end{theo}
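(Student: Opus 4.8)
The plan is to derive Theorem~\ref{main2} as a corollary of the equivalence in Theorem~\ref{main1} combined with the well-posedness theory for fibered continuity equations from \cite{PP-22-1-arxiv}: I would push $f_0$ forward to the Kuramoto variables, solve \eqref{kurak} there, and push the solution back to the original variables. First, set $\rho_0:=\int_{\RR^d}f_0(\cdot,v)\,dv$ and $\mu_0:=(\mathcal{T}^{2\to 1}[\rho_0])_{\#}f_0$. Since $|\nabla W(x)|=\tfrac{1}{1-\alpha}|x|^{1-\alpha}$ has sublinear growth (as $1-\alpha\in(0,1)$) and $\rho_0\in{\mathcal P}_2(\RR^d)$, the convolution $\nabla W*\rho_0$ is continuous with at most sublinear growth, so $\mathcal{T}^{2\to 1}[\rho_0]$ maps ${\mathcal P}_2(\RR^{2d})$ into itself and in particular $\mu_0\in{\mathcal P}_2(\RR^{2d})$. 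Its $\omega$-marginal $\nu$ is the push-forward of $f_0$ under $(x,v)\mapsto v+\nabla W*\rho_0(x)$, so
\begin{equation*}
\int_{\RR^{2d}}|\omega-\omega'|^{\frac{2-3\alpha}{1-\alpha}}\,d(\nu\otimes\nu)=\int_{\RR^{4d}}\bigl|(v+\nabla W*\rho_0(x))-(v'+\nabla W*\rho_0(x'))\bigr|^{\frac{2-3\alpha}{1-\alpha}}\,d(f_0\otimes f_0),
\end{equation*}
which is finite by hypothesis \eqref{nudiag0t0}; since the $\omega$-marginal of a solution of \eqref{kurak} does not evolve in time, $\nu$ will satisfy \eqref{nudiag0} on every interval $[0,T]$.

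Next, with $\mu_0\in{\mathcal P}_2(\RR^{2d})$ and the above control on its $\omega$-marginal, I would invoke the well-posedness results of \cite{PP-22-1-arxiv} (such as Theorem~5.3 or Theorem~B therein) to produce, for every $T>0$, a weak measure-valued solution $\boldsymbol{\mu}$ of \eqref{kurak} on $[0,T]$ in the sense of Definition~\ref{D1st}, with $\mu_{t=0}=\mu_0$, time-independent $\omega$-marginal $\nu$, and uniqueness within that class for the prescribed initial datum and marginal. The forward implication of Theorem~\ref{main1}, applicable since \eqref{nudiag0} holds, then gives that $f_t:=(\mathcal{T}^{1\to 2}[\rho_t])_{\#}\mu_t$, with $\rho_t$ the common $x$-marginal of $\mu_t$ and $f_t$, solves \eqref{sysk} in the sense of Definition~\ref{DCS}. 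Evaluating at $t=0$ and using that $\mathcal{T}^{1\to 2}[\rho_0]$ and $\mathcal{T}^{2\to 1}[\rho_0]$ are mutual inverses (they both fix the $x$-coordinate, hence are built from the same $\rho_0$) yields $f_{t=0}=f_0$. Moreover, this $f_t$ automatically satisfies \eqref{nudiagf0}: under $\mathcal{T}^{1\to 2}[\rho_t]$ the quantity $v+\nabla W*\rho_t(x)$ pulls back to $\omega$, whose law is the fixed $\nu$, so \eqref{nudiagf0} reduces to \eqref{nudiag0}; this makes the uniqueness claim below non-vacuous.

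For uniqueness, I would take two solutions $\boldsymbol{f}^1,\boldsymbol{f}^2$ of \eqref{sysk} in the sense of Definition~\ref{DCS} with initial datum $f_0$, each additionally satisfying \eqref{nudiagf0} on $[0,T]$, and apply the converse implication of Theorem~\ref{main1}: each $\mu^i_t:=(\mathcal{T}^{2\to 1}[\rho^i_t])_{\#}f^i_t$ is then a solution of \eqref{kurak} in the sense of Definition~\ref{D1st} with time-independent $\omega$-marginal. Since $\rho^i_0=\rho_0$, both have initial datum $\mu^i_0=\mu_0$ and the same $\omega$-marginal $\nu$ as in the first step, so the uniqueness part of the cited result forces $\mu^1\equiv\mu^2$ on $[0,T]$; hence $\rho^1_t=\rho^2_t$ for all $t$, and pushing back through $\mathcal{T}^{1\to 2}[\rho^i_t]$ recovers $\boldsymbol{f}^1\equiv\boldsymbol{f}^2$.

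Since Theorem~\ref{main1} and the results of \cite{PP-22-1-arxiv} are taken as given, the remaining work is largely bookkeeping, and the genuinely delicate points are: (i) checking that the transformed datum $\mu_0$ and its marginal $\nu$ lie exactly within the hypotheses under which \cite{PP-22-1-arxiv} provides both existence and uniqueness — precisely where \eqref{nudiag0t0} enters — and, relatedly, verifying that the specific kernel $W$, whose Hessian is singular at the origin, meets the structural assumptions of the fibered gradient-flow framework; and (ii) confirming that the two solution classes are matched by the change of variables, i.e.\ that \eqref{nudiagf0} (the a priori bound required by the converse half of Theorem~\ref{main1}) singles out exactly those solutions of \eqref{sysk} that correspond to Definition~\ref{D1st} solutions of \eqref{kurak}, within which uniqueness is available. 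I expect point (i) to absorb most of the effort.
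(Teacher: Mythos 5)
Your proposal follows the same route as the paper's proof: push $f_0$ forward to $\mu_0$ via $\mathcal{T}^{2\to 1}[\rho_0]$, verify that \eqref{nudiag0t0} translates into \eqref{nudiag0} for the time-independent $\omega$-marginal $\nu$, solve \eqref{kurak} via the well-posedness theory of \cite{PP-22-1-arxiv}, transfer the solution back using the forward part of Theorem~\ref{main1}, and then derive uniqueness from the converse part of Theorem~\ref{main1} together with uniqueness for \eqref{kurak}. The additional bookkeeping you supply (e.g.\ that $\mu_0\in\mathcal{P}_2$, that $f_{t=0}=f_0$, and that the constructed solution automatically satisfies \eqref{nudiagf0}) is correct and makes the argument more explicit than the paper's compressed version.
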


\begin{rem}

$\,$

(i) As we prove later, solutions $\boldsymbol{\mu}$ and $\boldsymbol{f}$ in the sense of Definitions  \ref{D1st} and \ref{DCS}, have uniformly bounded second order moments in $[0,T]$. Thus, for $\alpha\in(0,\frac{2}{3}]$ assumptions \eqref{nudiag0} and \eqref{nudiagf0} in Theorem \ref{main1} are trivially satisfied, since then the exponent $\frac{2-3\alpha}{1-\alpha}$ is positive and  \eqref{nudiag0} and \eqref{nudiagf0} can be obtained by interpolating between zeroth and second order moments. Indeed, by the definition of $W$ we have the following behavior
$$|v+\nabla W*\rho_t(x)|^{\frac{2-3\alpha}{1-\alpha}}\lesssim 1+|v|^{\frac{2-3\alpha}{1-\alpha}}+|x|^{2-3\alpha}\lesssim 1+|x|^2+|v|^2,$$
where in the last inequality we use Young's inequality together with the fact that $\frac{2-3\alpha}{1-\alpha}\in(0,2)$ for $\alpha\in (0,\frac{2}{3})$ (and thus also $2-3\alpha$ belongs to $(0,2)$).

\medskip

(ii) In light of the previous paragraph, condition \eqref{nudiagf0} in Theorem \ref{main1}  is relevant only when $\alpha\in (\frac{2}{3},1)$. Even then, it does not play any role in the existence of solutions, for which condition \eqref{nudiag0t0} in Theorem \ref{main2} is sufficient. The a priori condition is significant only as far as uniqueness is concerned. The possibility of existence of solutions $\boldsymbol{f}$ to the alignment system \eqref{sysk} that satisfy Definition \ref{DCS} but do not originate from solutions $\boldsymbol{\mu}$ in the sense of Definition \ref{D1st} is uncertain and requires further study. In such a case the novel conditions \eqref{nudiag0} and \eqref{nudiagf0} with negative exponent seem to play the key role.

\medskip

(iii) An alternative approach could have been to simply establish $f_t = (\cT^{1\to 2}[\rho_t])_{\#}\mu_t$ with $\boldsymbol{\mu}$ satisfying Definition \ref{D1st} as a notion of solutions to the alignment system \eqref{sysk}, as it was done for instance by {\sc Choi} and {\sc Zhang} in \cite{CZ-21}. This is further justified by the fact that in the class of so-called atomic solutions (which, in 1D, were shown to be unique in \cite{MP-18}) and in the class of smooth solutions it is easy to show equivalence between $f_t = (\cT^{1\to 2}[\rho_t])_{\#}\mu_t$ and $\boldsymbol{f}$ satisfying Definition \ref{DCS}. Our approach tackles a more difficult problem of equivalence of Definitions \ref{DCS} and \ref{D1st} for a wider class of solutions. It also leads to emergence of, otherwise unexpected nuances, such as assumption \eqref{nudiag0}. 

\end{rem}

The last result is a direct application of results from \cite{PP-22-1-arxiv} to solutions of \eqref{sysk}, which is possible thanks to equivalence granted by Theorem \ref{main1}.

\begin{theo}[Stability with compactly supported initial data]\label{main3}
Suppose that $\boldsymbol{f}$ is a solution to \eqref{sysk} satisfying the assumptions of Theorem \ref{main2} subject to initial datum $f_0$ satisfying conditions
\begin{align}\label{diamsupp}
\begin{aligned}
    &{\mathcal D}_x := \diam ({\rm supp}_x f_0)<+\infty,\\
    &{\mathcal D}_\omega := \diam \big(\{v+\nabla W*\rho_0(x):\ (x,v)\in {\rm supp} f_0\}\big)<+\infty.
\end{aligned}
\end{align}
Then, there exists a unique equilibrium $f_\infty\in \mathcal{P}_{2}(\mathbb{R}^{2d})$ of \eqref{sysk} such that
\begin{equation}\label{main30}
\int_{\mathbb{R}^{2d}}x\,df_0(x,v)=\int_{\mathbb{R}^{2d}}x\, df_\infty(x,v),
\end{equation}
and we also have
\begin{equation}\label{main31}
    W_{2}(f_t,f_\infty)\leq Ce^{-4(1-\alpha)\phi(D_1)t},
\end{equation}
for all $t\geq 0$. Here, $W_2$ denotes the $2$-Wasserstein metric, $C>0$ is a constant depending on the initial datum $f_0$ and on $f_\infty$, and $D_1>0$ is given by $$D_1:=\max\left\{\mathcal{D}_x,\mathcal{D}_\omega^{\frac{1}{1-\alpha}}\right\}.$$

Moreover let $\widetilde{\boldsymbol{f}}$ be another solution of \eqref{sysk},  subject to initial datum $\widetilde{f}_0$, satisfying the assumptions of Theorem \ref{main2} and condition \eqref{diamsupp} with constants $\widetilde{\mathcal{D}}_x$ and $\widetilde{\mathcal{D}}_\omega$. If in addition
$$\int_{\RR^{2d}}x\,df_0(x,v) = \int_{\RR^{2d}}x \,d\widetilde{f}_0(x,v), $$
then there exists an optimal transport-based metric ${\rm dist}$ on ${\mathcal P}_2(\RR^{2d})$ such that
$${\rm dist}(f_t,\widetilde{f}_t)\leq \left(1+\frac{1}{2\phi(D_2)}\right)\, {\rm dist}(f_0,\widetilde{f}_0),$$
for all $t\geq 0$, where $D_2$ is given by
$$D_2:=\max\left\{\mathcal{D}_x,\widetilde{\mathcal{D}}_x,\mathcal{D}_\omega^{\frac{1}{1-\alpha}},\widetilde{\mathcal{D}}_\omega^{\frac{1}{1-\alpha}}\right\}.$$
\end{theo}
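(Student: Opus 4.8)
The plan is to deduce Theorem~\ref{main3} from the corresponding large-time statements for the fibered continuity equation \eqref{kurak} proved in \cite{PP-22-1-arxiv}, using the equivalence of Theorem~\ref{main1} as the bridge. First I would translate the hypotheses. Given $f_0\in\mathcal{P}_2(\RR^{2d})$ satisfying \eqref{nudiag0t0} and the compact-support conditions \eqref{diamsupp}, set $\mu_0=(\cT^{2\to1}[\rho_0])_{\#}f_0$. Since $\cT^{2\to1}[\rho_0]$ leaves $x$ untouched and maps $(x,v)\mapsto(x,v+\nabla W*\rho_0(x))$, the measure $\mu_0$ has $x$-support of diameter $\mathcal{D}_x$ and $\omega$-marginal $\nu$ supported in a set of diameter $\mathcal{D}_\omega$; moreover \eqref{nudiag0} is automatic because its integrand is bounded on $\supp\nu\times\supp\nu$, and likewise \eqref{nudiagf0} holds because on compact supports its integrand is bounded uniformly in $t$, which legitimizes the uniqueness part of Theorem~\ref{main2}. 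Hence $\mu_0$ falls in the class for which \cite{PP-22-1-arxiv} provides a global solution $\boldsymbol{\mu}$ to \eqref{kurak} in the sense of Definition~\ref{D1st}, with time-independent $\omega$-marginal $\nu$ and with $x$-supports that stay uniformly bounded in time by the interaction length scale $D_1=\max\{\mathcal{D}_x,\mathcal{D}_\omega^{1/(1-\alpha)}\}$, the power $\tfrac{1}{1-\alpha}$ being forced by the homogeneity $|\nabla W(z)|\lesssim|z|^{1-\alpha}$ so that both $\mathcal{D}_x$ and the $v$-scale $\mathcal{D}_\omega$ are simultaneously dominated.

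Next I would invoke the convergence and stability results of \cite{PP-22-1-arxiv} (Theorem~5.3 / Theorem~B). Because \eqref{kurak} is the fibered Wasserstein gradient flow of the interaction energy built from $W$, and because on the uniformly bounded supports one has
$$D^2W(z)=\frac{\phi(|z|)}{1-\alpha}\Big(I_d-\alpha\,\tfrac{z}{|z|}\otimes\tfrac{z}{|z|}\Big)\succeq\phi(|z|)\,I_d\succeq\phi(D_1)\,I_d\qquad\text{for }|z|\leq D_1,$$
this energy is uniformly $\lambda$-convex along fibered geodesics with $\lambda$ a fixed multiple of $\phi(D_1)$. That yields a unique equilibrium $\mu_\infty$ --- fiberwise concentrated where $\boldsymbol{u}[\rho_\infty](\cdot,\omega)$ vanishes, i.e.\ on the graph $\omega=\nabla W*\rho_\infty(x)$ --- together with the decay $W_2(\mu_t,\mu_\infty)\leq Ce^{-4(1-\alpha)\phi(D_1)t}$, where the numerical constant is precisely the dissipativity modulus extracted in \cite{PP-22-1-arxiv} from the above Hessian bound. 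The remaining translational freedom of the equilibrium is fixed through a conserved quantity: differentiating the first moment along \eqref{kurak} gives $\frac{d}{dt}\int x\,d\mu_t=\int\omega\,d\nu$, since $\iint\nabla W(x-x')\,d\rho_t(x)\,d\rho_t(x')=0$ by oddness of $\nabla W$, so the centered first $x$-moment is conserved and singles out $\mu_\infty$, hence also $f_\infty$, so that \eqref{main30} holds.

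Then I would push everything back through $\cT^{1\to2}$. By Theorem~\ref{main1}, $f_t=(\cT^{1\to2}[\rho_t])_{\#}\mu_t$ solves \eqref{sysk} in the sense of Definition~\ref{DCS} and $f_\infty=(\cT^{1\to2}[\rho_\infty])_{\#}\mu_\infty$ is the corresponding equilibrium; since the $x$-marginals are untouched, \eqref{main30} is inherited. For \eqref{main31} I would bound $W_2(f_t,f_\infty)$ by transporting an optimal plan between $\mu_t$ and $\mu_\infty$ through the maps $\cT^{1\to2}[\rho_t]$ and $\cT^{1\to2}[\rho_\infty]$: the resulting cost is controlled by $W_2(\mu_t,\mu_\infty)$ plus the uniform norm of $\nabla W*\rho_t-\nabla W*\rho_\infty$ over the (bounded) supports, and the latter is $\lesssim W_2(\rho_t,\rho_\infty)\leq W_2(\mu_t,\mu_\infty)$ by Lipschitz continuity of $\nabla W$ on bounded sets; both pieces decay at rate $4(1-\alpha)\phi(D_1)$, which gives \eqref{main31} with $C$ depending on $f_0$ and $f_\infty$. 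For the two-solution estimate I would translate $f_t,\widetilde{f}_t$ to $\mu_t,\widetilde{\mu}_t$ (whose $\omega$-marginals need not coincide), apply the fibered-Wasserstein quasi-contraction estimate of \cite{PP-22-1-arxiv}, whose constant depends only on $\phi(D_2)$, and then \emph{define} the metric ${\rm dist}$ on $\mathcal{P}_2(\RR^{2d})$ as the pullback along $\cT^{2\to1}$ of that fibered transport metric, so that the factor $1+\tfrac{1}{2\phi(D_2)}$ carries over verbatim.

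The step I expect to be the main obstacle is bookkeeping rather than conceptual: matching the uniform-support propagation and the Hessian lower bound against exactly the length scale $D_1$, so that the claimed rate $4(1-\alpha)\phi(D_1)$ is recovered with its precise constant rather than merely up to an unspecified factor; and --- more delicately --- checking that the \emph{time-dependence} of the change of variables $\cT^{1\to2}[\rho_t]$ does not spoil the exponential rate when transferring between $W_2(f_t,f_\infty)$ and $W_2(\mu_t,\mu_\infty)$, which crucially uses that $\rho_t\to\rho_\infty$ decays at the same rate so that the extra term involving $\nabla W*\rho_t-\nabla W*\rho_\infty$ produces no slower mode.
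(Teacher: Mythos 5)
Your high-level strategy matches the paper's: translate $\boldsymbol{f}$ and $\widetilde{\boldsymbol{f}}$ into solutions $\boldsymbol{\mu}$, $\widetilde{\boldsymbol{\mu}}$ of \eqref{kurak} via Theorem~\ref{main1}, import the compactly supported large-time and stability estimates from \cite{PP-22-1-arxiv} (Corollary~5.14 and Theorem~5.13 there), pin down the equilibrium via conservation of the first $x$-moment, push back through $\cT^{1\to 2}[\rho_\cdot]$ by transporting an optimal plan, and define ${\rm dist}$ as the pullback of the adapted/fibered Wasserstein metric along $\cT^{2\to 1}$. All of that agrees with what the authors do.

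There is, however, a genuine error in the step where you transfer the exponential rate from $\boldsymbol{\mu}$ to $\boldsymbol{f}$. You claim that $\Vert\nabla W*\rho_t-\nabla W*\rho_\infty\Vert_\infty\lesssim W_2(\rho_t,\rho_\infty)$ ``by Lipschitz continuity of $\nabla W$ on bounded sets'' and conclude that the change of variables ``produces no slower mode.'' But $\nabla W(z)=\tfrac{1}{1-\alpha}z|z|^{-\alpha}$ is \emph{not} Lipschitz near $z=0$ (its Hessian blows up like $|z|^{-\alpha}$), so for a general probability measure $\rho$ the convolution $\nabla W*\rho$ is only $(1-\alpha)$-Hölder continuous. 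Consequently the extra cost term is controlled by $W_2^{1-\alpha}(\mu_t,\mu_\infty)$, not $W_2(\mu_t,\mu_\infty)$, and this \emph{does} slow the mode: from $W_2(\mu_t,\mu_\infty)\lesssim e^{-2\phi(D_1)t}$ one gets $W_2^2(f_t,f_\infty)\lesssim W_2^2(\mu_t,\mu_\infty)+W_2^{2-2\alpha}(\mu_t,\mu_\infty)\lesssim e^{-4(1-\alpha)\phi(D_1)t}$, and the factor $(1-\alpha)$ in the exponent of \eqref{main31} is precisely this Hölder degradation. In your write-up the rate $4(1-\alpha)\phi(D_1)$ appears by assertion but is not produced by your Lipschitz bound (which would yield the strictly faster rate $4\phi(D_1)$); the Hölder exponent is not ``bookkeeping'' but the mechanism that generates the stated constant, and it must enter as in the paper's estimate $\int|x-x'|^{2-2\alpha}\,d\gamma_t\lesssim W_2^{2-2\alpha}(\mu_t,\mu_\infty)$ rather than via a Lipschitz/Kantorovich duality argument.
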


Our setting lies in the intersection of two major directions of research related to the CS model: analysis of the CS model with singular interactions and study of its first-order reduction.
\subsubsection*{Cucker-Smale model with singular interactions}
Initial contributions towards the well posedness for the CS particle system with weakly singular interactions \eqref{ker} have been delivered by the first author in \cite{P-14, P-15}, which culminated in existence through the mean-field limit and the so called weak-atomic uniqueness for the kinetic CS model with singularity of order $\alpha\in(0,\frac{1}{2})$, see \cite{MP-18}. Other directions include the strongly singular CS model and its relation to collision-avoidance \cite{CCMP-17, M-18, YYC-20} and to the Euler-alignment system \cite{DMPW-19, DKRT-18,  ST-17, ST-18}. More information on the CS model can be found in the surveys \cite{CHL-17, MMPZ-19}. Outside of the aforementioned surveys is the study of monokinetic solutions to the kinetic CS model \cite{CC-21, KK-20}, which is an important new direction related to strongly singular CS model and the Euler-alignment system. Thus, from the perspective of kinetic CS model with singular interactions existence is known for $\alpha\in(0,\frac{1}{2})$ in multi-D and for $\alpha\in(0,1)$ in 1D, see \cite{CZ-21}. Here, we extend these results to $\alpha\in(0,1)$ in multi-D for the kinetic CS model with matrix-valued communication.

\subsubsection*{First-order reduction of the Cucker-Smale model} In line with the above-mentioned publication \cite{CZ-21}, we arrive at a new research direction related to transformation \eqref{trans}. In 1D it reduces to the well-known 1D Cucker-Smale-to-Kuramoto transformation, which has a very intuitive origin in the case of the particle system. Indeed, a single integration with respect to time $t\geq 0$ of system \eqref{csp} leads to the 1D Kuramoto-type equation
\begin{align}\label{kurap}
\begin{aligned}
&\dot{x}_i =  \omega_i + \frac{1}{N}\sum_{j=1}^N\nabla W(x_j-x_i),\\
&x_i(0)=x_{i0},\quad i\in\{1,\ldots,N\}.
\end{aligned}
\end{align}
Here, $\omega_i$ denote the fixed natural velocities defined as
$$
\omega_i := v_{i0} - \frac{1}{N}\sum_{j=1}^N \nabla W(x_{j0}-x_{i0}),
$$
which are nothing but a discrete variant of the change of variables \eqref{trans}. The above idea is well known and was used for instance in \cite{HKPZ-19, ZZ-20} to establish a complete control over cluster formation for the 1D CS model, and in \cite{P-14} to investigate the finite-time alignment and sticking of particles for the weakly singular CS model.
To the best of our knowledge, the current research employing the strategy of treating 1D CS model as a first-order Kuramoto-type system differs from our work in the following ways. First, in the singular 1D CS model the recent results such as the aforementioned \cite{CZ-21}  treat the weak formulation to \eqref{kurak} as relaxation of the weak formulation to \eqref{sysk}. In particular it is unclear if all solutions to \eqref{sysk} can be recovered as solutions to \eqref{kurak}. We provide conditions ensuring equivalence between their corresponding weak formulations leading to well-posedness. Second, we tackle the more general problem of the multidimensional system \eqref{sysk}. It is worthwhile to note that a similar approach was introduced recently by {\sc Kim} in \cite{K-21, K-22} but in the less complex case of regular communication weight.

One of the goals of this paper is to produce rigorous equivalence between first- and second-order systems for classes of weak solutions compatible with our recent work \cite{PP-22-1-arxiv}. In \cite{PP-22-1-arxiv} we introduced the concept of gradient flows with respect to the so-called fibered Wasserstein distance. We obtained a variety of results on stability for equation \eqref{kurak} including contractivity and its relation to asymptotics and the uniform-in-time mean-field limit for solutions initiated in compactly supported data. These results translate to equation \eqref{sysk}; some of these translated results can be seen in Theorem \ref{main3}.
Results similar to those of \cite{PP-22-1-arxiv} have been also obtained using the Filipov theory \cite{PPS-21, P-19-arxiv} for a Kuramoto-type model including a singular coupling, and they are also mostly compatible with the present paper.

The reminder of the paper is organized as follows. In Section \ref{sec:prelim} we present the necessary preliminary information including weak formulations of \eqref{sysk} and \eqref{kurak} and the rigorous definition of the transformation between them. Section \ref{sec:energest} is dedicated to the proof of energy estimate for weak solutions to \eqref{kurak}, while Section \ref{sec:mainproof} culminates in the proof of the main theorems.

\subsection*{Notation} Throughout the paper ${\mathcal P}(\RR^d)$ denotes the space of probability measures in $\RR^d$. Except otherwise stated, this space is equipped with the {\it narrow topology} (and we denote it by ${\mathcal P}(\RR^d)$-narrow), which can be defined sequentially as follows: a sequence $\{\mu_n\}$ in ${\mathcal P}(\RR^d)$ converges narrowly to $\mu\in{\mathcal P}(\RR^d)$ if and only if
\begin{align*}
    \lim_{n\to\infty}\int_{\RR^d} \psi\,d\mu_n = \int_{\RR^d} \psi\,d\mu, 
\end{align*}
for all bounded continuous functions $\psi$. 

If ${\mathcal T}:\RR^d\to\RR^d$ is a Borel-measurable mapping and $\mu\in{\mathcal P}(\RR^d)$, then by ${\mathcal T}_\#\mu\in{\mathcal P}(\RR^d)$ we denote the {\it pushforward} of $\mu$ along ${\mathcal T}$. It is defined on Borel sets $B$ as ${\mathcal T}_\#\mu(B) = \mu({\mathcal T}^{-1}(B))$ and obeys the change of variables formula
\begin{align*}
    \int_{\RR^d} g\,d {\mathcal T}_\#\mu = \int_{\RR^d} g \circ {\mathcal T} \,d\mu,
\end{align*}
for all Borel-measurable functions $g$. In particular, we have that $g$ is ${\mathcal T}_\#\mu$-integrable if, and only if, $g\circ {\mathcal T}$ is $\mu$-integrable.

Similarly, ${\mathcal P}_2(\RR^d)$ corresponds to the subspace of $\mathcal{P}(\mathbb{R}^d)$ having finite second order moment. For any two measures $\mu_1,\mu_2\in {\mathcal P}_2(\RR^d)$ we define  the Wasserstein $W_2$ distance as the solution of the following optimal transportation problem
\begin{align}\label{wasdef}
W_2(\mu_1, \mu_2) = \left(\inf_{\gamma\in \Gamma(\mu_1,\mu_2)}\int_{\RR^{2d}}\vert x-x'\vert^2\,d\gamma(x,x')\right)^{1/2},
\end{align}
where the infimum is taken over all probability measures $\gamma=\gamma(x,x')\in {\mathcal P}(\RR^{2d})$ with the $x$-marginal equal to $\mu_1$ and the $x'$-marginal equal to $\mu_2$. For more information on basics of optimal transport we refer to the textbooks \cite{AGS-08} or \cite{V-09}.

Throughout the paper, variables $(x,v)$ always relate to the second-order alignment system \eqref{sysk}, while $(x,\omega)$ are relegated to the first order Kuramoto-type equation \eqref{kurak}. Following this distinction, we define the diagonal set
\begin{equation}\label{diagonalset}
\begin{split}
        \Delta &:= \big\{((x,v),(x',v'))\in\RR^{4d}:\, (x,v)=(x',v')\big\}\\
        &\quad \mbox{ or } \big\{((x,\omega),(x',\omega'))\in\RR^{4d}:\, (x,\omega)=(x',\omega')\big\},
\end{split}
\end{equation}
and collision set
\begin{equation}\label{colisionset}
\begin{split}
          {\rm Col} &:= \{(t,(x,v),(x',v'))\in [0,T]\times\RR^{4d}:\ x=x'\mbox{ and } v\neq v'\}\\
           &\quad\mbox{ or } \{(t,(x,\omega),(x',\omega'))\in [0,T]\times\RR^{4d}:\ x=x'\mbox{ and } \omega\neq\omega'\},
\end{split}
\end{equation}
depending on whichever setting \eqref{sysk} or \eqref{kurak} is considered. While \eqref{diagonalset} and \eqref{colisionset} are a slight abuse of notation, it is contextually clear which is viable at any given moment. Furthermore, they are in fact equivalent since, by \eqref{trans}, $v=v'$ if and only if $\omega=\omega'$, whenever $x=x'$.

For any matrix-valued function $M:\RR^{2d}\longrightarrow\RR^{d\times d}$ and any vector field $V:\RR^{2d}\longrightarrow\RR^d$ sufficiently integrable, we define the convolution $[M*V]:\RR^{2d}\longrightarrow\RR^d$ as
\begin{equation}\label{E-convolution-matrix-vector}
[M*V](x,\omega):=\int_{\RR^{2d}}M(x-x',\omega-\omega')\,V(x',\omega')\,dx'\,d\omega'.
\end{equation}
Similarly, for any other vector field $W:\RR^{2d}\longrightarrow \RR^d$ we define $V*[W]:\mathbb{R}^{2d}\longrightarrow\RR$ as
\begin{equation}\label{E-convolution-vector-vector}
V*[W](x,\omega):=\int_{\RR^{2d}}V(x-x',\omega-\omega')\cdot W(x',\omega')\,dx'\,d\omega'.
\end{equation}

Finally, we use the notation $A\lesssim B$ if there exists a constant $C>0$, independent of the relevant parameters, such that $A\leq CB$.  Moreover we use the short-hand notation $a'$ to emphasise whenever a function $a=a(x,v)$ depends on alternative variables $(x',v')$.

\section{Preliminaries}\label{sec:prelim}
The following section is dedicated to the presentation of the preliminary information and results: rigorous definition of the first-to-second order transformation, weak formulations and basic properties of the solutions.

\subsection{First-to-second order transformation}
Our goal is to derive $\boldsymbol{f}$ solving \eqref{sysk} as a transformation of $\boldsymbol{\mu}$ solving \eqref{kurak} and vice versa. While the transformation introduced in \eqref{trans} is well defined if $\boldsymbol{\mu}$ and $\boldsymbol{f}$ are smooth enough, we need a similar notion applicable for measures. 

\begin{defi}[Change of variables]\label{D-change-variables}
Let $\rho\in \mathcal{P}_2(\mathbb{R}^d)$ be any probability measure. Then, we define the following transformations $\mathcal{T}^{1\to 2}[\rho],\mathcal{T}^{2\to 1}[\rho]:\mathbb{R}^{2d}\longrightarrow\mathbb{R}^{2d}$ of the phase space:
\begin{align*}
\begin{aligned}
\mathcal{T}^{1\to 2}[\rho](x,\omega)&:=(x,\boldsymbol{u}[\rho](x,\omega)), && (x,\omega)\in \mathbb{R}^{2d},\\
\mathcal{T}^{2\to 1}[\rho](x,v)&:=(x,\boldsymbol{\omega}[\rho](x,v)), && (x,v)\in \mathbb{R}^{2d},
\end{aligned}
\end{align*}
where
\begin{equation}\label{uomega}
    \begin{split}
          \boldsymbol{u}[\rho](x,\omega) = \omega-\nabla W*\rho(x),\\
    \boldsymbol{\omega}[\rho](x,v) = v+\nabla W*\rho(x).
    \end{split}
\end{equation}
In addition, given any $\mu,f\in \mathcal{P}(\mathbb{R}^{2d})$ with $\pi_{x\#}\mu=\rho=\pi_{x\#}f$, we define
\begin{align}
f^\mu&:=\mathcal{T}^{1\to 2}[\rho]_{\#}\mu\in \mathcal{P}(\mathbb{R}^{2d}),\label{defiT}\\
\mu^f&:=\mathcal{T}^{2\to 1}[\rho]_{\#}f\in \mathcal{P}(\mathbb{R}^{2d})\label{defiTstar}.
\end{align}
\end{defi}

We note that the above transformations $\mathcal{T}^{1\to 2}[\rho]$ and $\mathcal{T}^{2\to 1}[\rho]$ are well defined by the assumption $\rho\in \mathcal{P}_2(\mathbb{R}^d)$ and the growth $\vert \nabla W(x)\vert\lesssim\vert x\vert^{1-\alpha}$. In addition, they are inverse to each other by definition. Therefore, the pushforward map $\mathcal{T}^{1\to 2}[\rho]_{\#}$ in \eqref{defiT} defines a bijective mapping over the subspace of probability measures $\mathcal{P}(\mathbb{R}^{2d})$ with fixed marginal $\rho$ with respect to $x$. In addition, its inverse is given by the pushforward map $\mathcal{T}^{2\to 1}[\rho]_{\#}$ in \eqref{defiTstar}. Moreover, we remark that the spacial distribution stays unchanged under those transformations since we have
$$\pi_{x\#} f^\mu=\rho=\pi_{x\#} \mu^f.$$
Finally, we note that if $\mu$ and $f$ are densities (and since $\mathcal{T}^{1\to 2}[\rho]$ and $\mathcal{T}^{2\to 1}[\rho]$ are measure-preserving maps), then $f^\mu$ and $\mu^f$ in \eqref{defiT} and \eqref{defiTstar} simply take the form
\begin{align*}
f^\mu(x,v)&=\mu(x,v+\nabla W*\rho(x)),\\
\mu^f (x,\omega)&=f(x,\omega-\nabla W*\rho(x)).
\end{align*}

\subsection{Weak formulations for \eqref{sysk} and \eqref{kurak}}
Next, we proceed  by introducing the appropriate weak formulations for $\boldsymbol{f}$ and $\boldsymbol{\mu}$.
The starting point is the classical notion of measure-valued solution for the alignment system \eqref{sysk} associated with the identity
\begin{align}\label{DCSprep}
\int_0^T\int_{\RR^{2d}} \Bigg(\partial_t\psi_t + v\cdot\nabla_x\psi_t - \Big((D^2 W v)*f_t\Big)\cdot\nabla_v\psi_t\Bigg) \,df_t\,dt =
- \int_{\RR^{2d}} \psi_0\,df_0,
\end{align}
 which holds for any smooth test function $\psi=\psi_t(x,v)$, compactly supported in $[0,T)$. However, note that for singular $\phi$ as in \eqref{ker}, the nonlinear term may be ill-defined since $|D^2W|\approx \phi$. We circumvent this problem by diagonalizing such a term thanks to the anti-symmetry of $D^2 W(x-x')(v-v')$ under the change of variables $(x,v)\leftrightarrow (x',v')$. Formally we have
\begin{align}\label{sym}
\begin{aligned}
\int_0^T\int_{\RR^{2d}} &\Big((D^2 W v)*f\Big)\cdot\nabla_v\psi_t(x,v)\,df_t \,dt\\
&= \int_0^T\int_{\RR^{4d}} \nabla_v\psi_t(x,v)^\top D^2 W(x-x')(v-v') \,d(f_t\otimes f_t') \,dt\\ 
&= -\frac{1}{2} \int_0^T\int_{\RR^{4d}}(\nabla_v\psi_t(x,v)-\nabla_v\psi_t(x',v'))^\top D^2 W(x-x') (v'-v) \,d(f_t\otimes f_t') \,dt\\
&= -\frac{1}{2} \frac{1}{1-\alpha}\int_0^T\int_{\RR^{4d}}(\nabla_v\psi_t(x,v)-\nabla_v\psi_t(x',v'))\cdot (v'-v)\,\phi(|x-x'|)\,d(f_t\otimes f_t')\,dt\\
 &\qquad +\frac{1}{2}\frac{\alpha}{1-\alpha}\int_0^T\int_{\RR^{4d}}(\nabla_v\psi_t(x,v)-\nabla_v\psi_t(x',v'))\cdot(x'-x)\\
 &\qquad\qquad\times \phi(|x-x'|)\frac{(x-x')\cdot (v-v')}{|x-x'|^2}\,d(f_t\otimes f_t') \,dt,
\end{aligned}
\end{align}
where the last line contains the expansion of $D^2W$ for the readers' convenience. We note that in doing so, the factor $\nabla_v\psi_t(x,v)-\nabla_v\psi_t(x',v')$ coming from the diagonalization cancels one order of the singularity of $\phi(|x-x'|)$ with respect to $|x-x'|$ on the right-hand side of \eqref{sym} thanks to the Lipschitz-continuity of $\nabla_v\psi$, see Remark \ref{welldefi} below. More precisely, we remark that the right-hand side of \eqref{sym} can rigorously be defined only whenever $x\neq x'$. Surprisingly, as we prove in Lemma \ref{enstrophy}, the collision set {\rm Col} defined in \eqref{colisionset} is negligible with respect to the measure $f_t\otimes f_t\otimes \mathcal{L}^1_{\mathlarger{\llcorner}[0,T]}(t)$, where $\mathcal{L}^1_{\mathlarger{\llcorner}[0,T]}$ stands for the 1D Lebesgue measure restricted to $[0,T]$. Therefore, the only problematic situation is reduced to the diagonal set $\Delta$ defined in \eqref{diagonalset}. To make the above calculations rigorous, we remove the diagonal set from all integrals since it has no actual contribution in accordance with the particle system, see \cite{P-14} and also \cite{PPS-21}.

Based on the above computation we introduce the weak formulation for \eqref{sysk}.

\begin{defi}[Weak formulation for \eqref{sysk}]\label{DCS}
Consider any $T>0$ and $f_0\in {\mathcal P}_2(\mathbb{R}^{2d})$. We say that $\boldsymbol{f}\in C([0,T], {\mathcal P}(\mathbb{R}^{2d})-\mbox{narrow})$  is a weak measure-valued solution to \eqref{sysk} in the time interval $[0,T]$, subject to the initial datum $f_0$, if the following weak formulation is verified
\begin{align}\label{weakeqf}
- \int_{\RR^{2d}} &\psi_0\,df_0 = \int_0^T\int_{\RR^{2d}} \left(\partial_t\psi_t + v\cdot\nabla_x\psi_t\right)\,df_t\,dt\\
&+ \frac{1}{2} \int_0^T\int_{\RR^{4d}\setminus\Delta}(\nabla_v\psi_t(x,v)-\nabla_v\psi_t(x',v'))^\top D^2 W(x-x') (v'-v) \,d(f_t\otimes f_t')\,dt,\nonumber
\end{align}
 for all test functions $\psi\in C^1_c([0,T)\times\RR^{2d})$, compactly supported in $[0,T)$ with Lipschitz-continuous velocity gradient $\nabla_v\psi_t$ (uniformly with respect to $t$). Furthermore, the kinetic energy and enstrophy satisfy the inequality
\begin{equation}\label{weakeqfa}
\int_{\RR^{2d}}|v|^2\,df_T + \int_0^T \int_{\RR^{4d}\setminus\Delta}|v-v'|^2\phi(|x-x'|)\,d(f_s\otimes f_s') \,ds \leq \int_{\RR^{2d}}|v|^2\,df_0.
\end{equation}
Finally, the second-order moments with respect to $x$ are uniformly bounded
\begin{equation}\label{weakeqfb}
\sup_{t\in[0,T]}\int_{\RR^{2d}}|x|^{2}\,df_t <\infty.
\end{equation}
\end{defi}

\begin{rem}[Good definition]\label{welldefi}
Consider $\psi\in C^1([0,T]\times \mathbb{R}^d)$ with Lipschitz-continuous $\nabla_v\psi$ (uniformly with respect to $t$). Then, Young's inequality and Lemma \ref{szwarc} (iii) imply that
\begin{align}\label{E-welldefi}
\begin{aligned}
&\left\vert (\nabla_v\psi_t(x,v)-\nabla_v\psi_t(x',v'))^\top D^2 W(x-x')\,(v'-v)\right\vert\\
&\qquad \qquad \qquad\lesssim [\nabla_v\psi]_{\rm Lip}\,(|x-x'|+|v-v'|)\,\phi(|x-x'|)\,|v-v'|\\
&\qquad \qquad \qquad\lesssim [\nabla_v\psi]_{\rm Lip}\left(|v-v'|^2\phi(|x-x'|)+|x-x'|^2\phi(|x-x'|)\right)\\
&\qquad \qquad \qquad\lesssim [\nabla_v\psi]_{\rm Lip}\left(|v-v'|^2\phi(|x-x'|)+|x|^{2}+|x'|^{2}+1\right),
\end{aligned}
\end{align}
for any $t\in [0,T]$ and $((x,v),(x',v'))\in \mathbb{R}^{4d}\setminus \Delta$. Above, we have used the identity $\phi(r)r^2=r^{2-\alpha}$, and we have interpolated $(2-\alpha)$-order moments by second-order moments. Since the right hand side in \eqref{E-welldefi} is integrable with respect to $f_t\otimes f_t\otimes \mathcal{L}^1_{\mathlarger{\llcorner}[0,T]}(t)$ on $(\mathbb{R}^{4d}\setminus \Delta)\times [0,T]$ thanks to the properties \eqref{weakeqfa} and \eqref{weakeqfb}, then we note that each term in \eqref{weakeqf} is well-defined.
\end{rem}

\begin{rem}
Inequalities \eqref{weakeqfa} and \eqref{weakeqfb} are standard in {\rm non-singular} Cucker-Smale-type alignment models and can be directly derived from \eqref{weakeqf} under suitable uniform-in-time tightness assumptions on the solution $\boldsymbol{f}$. For the non-singular CS model, it was shown in \cite{HL-09}, that if $f_0$ is compactly supported then the support of $\boldsymbol{f}$ is uniformly bounded in $[0,T]$. Thus, the second moments in both $x$ and $v$ are uniformly bounded, from which  \eqref{weakeqfa} and \eqref{weakeqfb} readily follow. The singular variant of the CS model is an entirely different story, wherein condition \eqref{weakeqfa} plays the role of an {\it a priori} assumption ensuring good definition of the last integral in \eqref{weakeqf}. In singular models, inequality \eqref{weakeqfa} can be derived on the level of mean-field approximation and then carried over to solutions of kinetic equations, as was done in \cite{MP-18}. However, to the best of our knowledge, in singular alignment models, \eqref{weakeqfa} is yet to be deduced solely based on the weak formulation \eqref{weakeqf}. Naturally, this issue can be boiled down to the question of uniqueness of solutions. In the present paper we construct solutions satisfying both \eqref{weakeqfa} and \eqref{weakeqfb} and they are unique provided that \eqref{nudiag0} holds too.
\end{rem}

Under the assumptions in Definition \ref{DCS}, we obtain improved properties on any weak measure-valued solution to \eqref{sysk}, which we summarize in the following result.

\begin{pro}\label{W2contf}
Consider any $T>0$ and $f_0\in \mathcal{P}_2(\mathbb{R}^{2d})$, and let $\boldsymbol{f}$ be any weak measure-valued solution to \eqref{sysk} with initial datum $f_0$ in the sense of Definition \ref{DCS}. Then, we have
\begin{enumerate}[label=(\roman*)]
\item (Second-order moments) The following uniform bound holds true:
$$\sup_{t\in [0,T]}\int_{\mathbb{R}^{2d}}|x|^2+|v|^2\,df_t<\infty.$$
\item (Time-continuity) The solution has improved time-continuity:
$$\boldsymbol{f}\in C([0,T],\mathcal{P}_2(\mathbb{R}^{2d})-W_2).$$
\item (Admissible test functions) The weak formulation in Definition \ref{DCS} holds for general test functions with non-compact support, namely \eqref{weakeqf} remains true for any $\psi\in C^1_b([0,T]\times \mathbb{R}^{2d})$, compactly supported in $[0,T)$, with Lipschitz-continuous $\nabla_v\psi$ (uniformly in $t$).
\end{enumerate}
\end{pro}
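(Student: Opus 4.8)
The plan is to bootstrap from the three defining properties in Definition \ref{DCS}: the weak formulation \eqref{weakeqf}, the energy--enstrophy inequality \eqref{weakeqfa}, and the uniform bound \eqref{weakeqfb} on the spatial second moments. Part (i) is nearly immediate: \eqref{weakeqfa} controls $\int|v|^2\,df_t$ by $\int|v|^2\,df_0$ uniformly in $t$, and \eqref{weakeqfb} controls $\int|x|^2\,df_t$; adding them gives the claim. The only subtlety is that \eqref{weakeqfa} is stated at the endpoint $t=T$; since the same definition applies on any sub-interval $[0,t]$ (the solution restricted to $[0,t]$ is still a weak solution in the sense of Definition \ref{DCS}), we get the bound for every $t\in[0,T]$.

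For part (iii), I would approximate a general $\psi\in C^1_b([0,T]\times\RR^{2d})$, compactly supported in time with $\nabla_v\psi$ uniformly Lipschitz, by a sequence $\psi^{(n)}:=\chi_n\psi$, where $\chi_n=\chi_n(x,v)$ is a smooth cutoff equal to $1$ on the ball of radius $n$, supported in the ball of radius $2n$, with $|\nabla\chi_n|\lesssim 1/n$ and $|D^2\chi_n|\lesssim 1/n^2$; this keeps $\psi^{(n)}\in C^1_c$ with $[\nabla_v\psi^{(n)}]_{\rm Lip}$ bounded uniformly in $n$ (here one uses that $\psi$, $\nabla_v\psi$ are bounded and $\nabla_v\psi$ is Lipschitz, so the products obey a uniform Lipschitz bound). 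Then I would pass to the limit in each term of \eqref{weakeqf}. The linear terms $\int_0^T\int(\partial_t\psi^{(n)}+v\cdot\nabla_x\psi^{(n)})\,df_t\,dt$ converge by dominated convergence, using part (i) to dominate the $v\cdot\nabla_x\psi$ contribution (this is where $\int|v|^2\,df_t<\infty$ is essential, since $v\cdot\nabla_x\chi_n\cdot\psi$ must be dominated by something $f_t$-integrable — in fact $|v|\mathbf 1_{|v|\ge 1}\lesssim |v|^2$ works on the relevant region). The bilinear term is handled similarly: on $\RR^{4d}\setminus\Delta$ the integrand for $\psi^{(n)}$ is dominated, uniformly in $n$, by the right-hand side of \eqref{E-welldefi} (times the uniform Lipschitz constant), which is integrable against $f_t\otimes f_t\otimes\mathcal L^1_{\llcorner[0,T]}$ by \eqref{weakeqfa}--\eqref{weakeqfb} exactly as in Remark \ref{welldefi}; pointwise convergence of the integrand as $n\to\infty$ then gives the result by dominated convergence. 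The initial-datum term converges since $\psi^{(n)}_0\to\psi_0$ pointwise and boundedly and $f_0$ is a probability measure.

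For part (ii), I would first upgrade narrow continuity to $W_2$-continuity using part (i): a sequence of probability measures converging narrowly and having uniformly bounded (hence equi-integrable after an extra moment) second moments converges in $W_2$ provided the second moments themselves converge. Thus it suffices to show $t\mapsto\int(|x|^2+|v|^2)\,df_t$ is continuous. Continuity of $t\mapsto\int|x|^2\,df_t$ follows by testing the weak formulation with (a time-localized, spatially truncated version of) $\psi(x,v)=|x|^2$, for which $\nabla_v\psi=0$ so the singular bilinear term drops out entirely and one is left with $\frac{d}{dt}\int|x|^2\,df_t=2\int x\cdot v\,df_t$, a bounded quantity by (i); hence $t\mapsto\int|x|^2\,df_t$ is Lipschitz. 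The velocity moment is the harder half: here I cannot directly test with $|v|^2$ because the bilinear term does not vanish and only the energy \emph{inequality} \eqref{weakeqfa} is assumed, not an identity. Instead I would combine (a) narrow lower semicontinuity of $v\mapsto\int|v|^2\,df_t$, which gives $\int|v|^2\,df_t\le\liminf$, with (b) the monotone/energy structure: \eqref{weakeqfa} applied on $[0,t]$ gives $\int|v|^2\,df_t\le\int|v|^2\,df_0$ and, applied on $[s,t]$ (again using that the restriction is still a solution), $\int|v|^2\,df_t\le\int|v|^2\,df_s$, so $t\mapsto\int|v|^2\,df_t$ is non-increasing; a non-increasing, lower-semicontinuous function of $t$ is continuous from the left, and left-continuity together with an analogous right-continuity argument (narrow convergence $f_t\to f_s$ as $t\downarrow s$ forces $\limsup_{t\downarrow s}\int|v|^2\,df_t\le\int|v|^2\,df_s$ via monotonicity while liminf $\ge$ by lower semicontinuity) yields full continuity.

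The main obstacle is precisely this last point: extracting continuity (not merely semicontinuity) of the velocity second moment from an energy \emph{inequality}. The resolution I expect to use is the monotonicity of $t\mapsto\int|v|^2\,df_t$ inherited from \eqref{weakeqfa} on arbitrary subintervals, which turns one-sided semicontinuity bounds into genuine continuity; one must be slightly careful that \eqref{weakeqfa} does hold on every subinterval $[s,t]\subseteq[0,T]$, which follows because the restriction of $\boldsymbol f$ to $[s,t]$ (with initial datum $f_s$) again satisfies Definition \ref{DCS} — the enstrophy term is additive over time and the weak formulation localizes.
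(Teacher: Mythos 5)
You correctly reduce part (ii) to showing that $t\mapsto\int|v|^2\,df_t$ is continuous, but your argument for it breaks, and the reason you chose that route is based on a misconception.

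The misconception first. You write that you ``cannot directly test with $|v|^2$ because the bilinear term does not vanish and only the energy inequality \eqref{weakeqfa} is assumed, not an identity.'' But the weak formulation \eqref{weakeqf} \emph{is} an identity. Testing with a truncated, time-localized version of $\varphi(x,v)=|v|^2$ (so that $\nabla_v\varphi^R$ is bounded and Lipschitz and everything is compactly supported) yields an exact identity for the truncated moments, and passing to the limit in the truncation parameter $R\to\infty$ yields
\begin{equation*}
\int_{\RR^{2d}}|v|^2\,df_t=\int_{\RR^{2d}}|v|^2\,df_s-\int_s^t\int_{\RR^{4d}\setminus\Delta}(v-v')^\top D^2W(x-x')(v-v')\,d(f_r\otimes f_r)\,dr,
\end{equation*}
for all $s\leq t$. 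The only role of the a priori bound \eqref{weakeqfa} here is to dominate the bilinear term so that dominated convergence applies as $R\to\infty$ (as in Remark \ref{welldefi}); the identity itself comes from \eqref{weakeqf}. This is exactly the route the paper takes, and it shows that the velocity second moment is absolutely continuous in $t$, hence continuous, which is what (ii) needs.

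Now the gap in the route you actually took. You invoke (a) that $t\mapsto m(t):=\int|v|^2\,df_t$ is non-increasing, and (b) narrow lower semicontinuity of $f\mapsto\int|v|^2\,df$, and assert that ``a non-increasing, lower-semicontinuous function of $t$ is continuous from the left.'' That is false: a non-increasing, lower-semicontinuous function is \emph{right}-continuous, not left-continuous (consider $m(t)=1$ for $t<0$ and $m(t)=0$ for $t\geq 0$: it is non-increasing and LSC but jumps down from the left at $0$). Both monotonicity and narrow LSC only give $\liminf_{t\uparrow t_*}m(t)\geq m(t_*)$, so neither rules out a downward jump from the left, and your argument never produces the needed upper bound $\limsup_{t\uparrow t_*}m(t)\leq m(t_*)$. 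So left-continuity of the velocity second moment is not established, and with it (ii) is not established. The fix is precisely to recover the identity above from \eqref{weakeqf} by truncation and domination, rather than trying to squeeze continuity out of monotonicity and semicontinuity alone. The rest of your proposal — parts (i) and (iii), the handling of $|x|^2$, and the cut-off/dominated-convergence strategy — matches the paper's approach and is sound.
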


\begin{proof}
We start by claiming that the following two identities hold true
\begin{align}
\int_{\RR^{2d}}|v|^2\, df_t = \int_{\RR^{2d}}|v|^2\,df_s &- \int_{s}^t\int_{\RR^{4d}\setminus \Delta} (v-v')^\top D^2 W(x-x') (v-v')\ d(f_r \otimes f_r) \ dr,\label{E-second-moments-v}\\
\int_{\RR^{2d}}|x|^2\, df_t = \int_{\RR^{2d}}|x|^2\, df_s &- \int_{s}^t\int_{\RR^{4d}\setminus \Delta} (x-x')^\top D^2 W(x-x') (v-v')\ d(f_r \otimes f_r) \ dr
\label{E-second-moments-x}\\
&\hspace{5cm} + 2\int_s^t \int_{\RR^{2d}} x\cdot v\ df_r\ dr,\nonumber
\end{align}
for any $0\leq s\leq t\leq T$. We shall prove the above claim later, but first we use it to prove the above items $(i)$, $(ii)$ and $(iii)$.

\medskip

$\diamond$ {\sc Step 1}. Proof of $(i)$ and $(ii)$.\\
On the one hand, the uniform bound of the second-order moments with respect to $x$ in $(i)$ follows from the hypothesis \eqref{weakeqfb} in Definition \ref{DCS}. In addition, setting $s=0$ in \eqref{E-second-moments-v} and using Lemma \ref{szwarc}(ii) on the second integral in the right-hand side, we infer that the velocity second-order moments with respect to $v$ are non-increasing and therefore
$$\int_{\mathbb{R}^{2d}}|v|^2\,df_t\leq \int_{\mathbb{R}^{2d}}|v|^2 \,df_0,$$
for any $t\in [0,T]$. Since $f_0\in \mathcal{P}_2(\mathbb{R}^{2d})$, then we also have the uniform bound of the second-order moments with respect to $v$ in $(i)$, and in particular they are finite for every $t\in [0,T]$.

On the other hand, let us define the following couple of functions:
\begin{align*}
g(t)&:=-\int_{\RR^{4d}\setminus \Delta} (v-v')^\top D^2 W(x-x') (v-v')\ d(f_t \otimes f_t),\\
h(t)&:=-\int_{\RR^{4d}\setminus \Delta} (x-x')^\top D^2 W(x-x') (v-v')\ d(f_t \otimes f_t)+2\int_{\mathbb{R}^{2d}}x\cdot v\,df_t,
\end{align*}
for any $t\in [0,T]$. By Remark \ref{welldefi}, properties \eqref{weakeqfa}-\eqref{weakeqfb}, and the above uniform bound of second-order moments in $(i)$ the functions $g,h$ are well defined and belong to $L^1(0,T)$. Therefore, \eqref{E-second-moments-v}-\eqref{E-second-moments-x} infer that the following two functions are (absolutely) continuous
$$t\in [0,T]\longmapsto \int_{\mathbb{R}^{2d}} |x|^2\,df_t\quad\mbox{and}\quad t\in [0,T]\longmapsto \int_{\mathbb{R}^{2d}} |v|^2\,df_t.$$
Note that the above readily implies $(ii)$. Specifically, since $\boldsymbol{f}$ is time-continuous in the narrow topology by hypothesis, then we have that $f_{t_n}\rightarrow f_{t_*}$ narrowly for any $\{t_n\}_{n\in \mathbb{N}}\subset [0,T]$ and $t_*\in [0,T]$ such that $t_n\rightarrow t_*$. In addition, by the time-continuity of moments above, we have
$$\int_{\mathbb{R}^{2d}}|x|^2+|v|^2\,df_{t_n}\rightarrow \int_{\mathbb{R}^{2d}}|x|^2+|v|^2\,df_{t_*}.$$
This of course implies that $f_{t_n}\rightarrow f_{t_*}$ with respect to the $W_2$ Wasserstein distance by the usual characterization, see \cite[Theorem 6.9]{V-09}.

\medskip

$\diamond$ {\sc Step 2}. Proof of $(iii)$.\\
We employ a classical cut-off argument, which exploits the above uniform bound of second-order moments. To such an end, we set any $\psi\in C^1_b([0,T]\times \mathbb{R}^{2d})$, compactly supported in $[0,T)$, with Lipschitz-continuous $\nabla_v \psi$ (uniformly in $t$). Let us consider a smooth cut-off function $\chi\in C^\infty_c(\mathbb{R}^{2d})$, $0\leq \chi\leq 1$, with $\chi=0$ for $|(x,v)|\geq 2$ and $\chi=1$ for $|(x,v)|\leq 1$, and we define
$$\chi_R(x,v)=\chi\left(\frac{x}{R},\frac{v}{R}\right),\quad (x,v)\in \mathbb{R}^{2d},$$
for any $R>0$. Then, we can truncate $\psi$ and define $\psi^R_t(x,v):=\chi_R(x,v)\,\psi_t(x,v)$ which now belongs to $C^1_c([0,T)\times \mathbb{R}^{2d})$ and verifies that $\nabla_v\psi_R$ is again Lipschitz-continuous (uniformly in $t$). Since $\psi_R$ satisfies the appropriate assumptions, then the weak formulation \eqref{weakeqf} holds
\begin{align}\label{E-weakeqf-truncations}
- \int_{\RR^{2d}} &\psi_{0}^R\,df_0 = \int_0^T\int_{\RR^{2d}} \left(\partial_t\psi_t^R + v\cdot\nabla_x\psi_t^R\right)\,df_t\,dt\\
&+ \frac{1}{2} \int_0^T\int_{\RR^{4d}\setminus \Delta}(\nabla_v\psi_t^R(x,v)-\nabla_v\psi_t^R(x',v'))^\top D^2 W(x-x') (v'-v) \,d(f_t\otimes f_t')\,dt,\nonumber
\end{align}
Our final goal is to show that we can pass to the limit as $R\rightarrow \infty$ and therefore the weak formulation \eqref{weakeqf} also holds for the test function $\psi$. We remark that
$$1-\chi_R(x,v)\leq \mathds{1}_{\mathbb{R}^{2d}\setminus B_R}(x,v)\leq \frac{1}{R^2}(|x|^2+|v|^2),$$
where $\mathds{1}_{\mathbb{R}^{2d}\setminus B_R}$ is the characteristic function of the complement of the ball $B_R$ centered at $0$ with radius $R$. Therefore the first three terms in \eqref{E-weakeqf-truncations} can be controlled as follows
\begin{align*}
\left\vert\int_{\mathbb{R}^{2d}}(\psi_0^R-\psi_0)\,df_0\right\vert 
&\leq \frac{| \psi_0|_\infty}{R^2}\int_{\mathbb{R}^{2d}}|x|^2+|v|^2\,df_0,\\
\left\vert\int_0^T\int_{\mathbb{R}^{2d}}(\partial_t\psi^R-\partial_t\psi)\,df_t\,dt\right\vert 
&\leq \frac{T| \partial_t\psi|_\infty}{R^2}\sup_{t\in [0,T]}\int_{\mathbb{R}^{2d}} |x|^2+|v|^2\,df_t,\\
\left\vert\int_0^T\int_{\mathbb{R}^{2d}} v\cdot (\nabla_x\psi^R-\nabla_x\psi)\,df_t\,dt\right\vert&\lesssim \frac{T| \nabla_x\psi|_\infty}{R}\sup_{t\in [0,T]}\int_{\mathbb{R}^{2d}}|x|^2+|v|^2\,df_t,
\end{align*}
which converge to zero as $R\rightarrow \infty$ thanks to (i). For the nonlinear term in \eqref{E-weakeqf-truncations}, we note that
\begin{multline*}
\left\vert(\nabla_v\psi_t^R(x,v)-\nabla_v\psi_t(x,v))-(\nabla_v\psi_t^R(x',v')-\nabla_v\psi_t(x',v'))\right\vert\\
\leq C\left((1-\chi_R(x,v))+\frac{1}{R}\right)(|x-x'|+|v-v'|)\\
\leq C\left(\mathds{1}_{\mathbb{R}^{2d}\setminus B_R}(x,v)+\frac{1}{R}\right)(|x-x'|+|v-v'|),
\end{multline*}
for any $t\in [0,T]$, each $(x,v),(x',v')\in \mathbb{R}^{2d}$ and some $C>0$ depending on the uniform and Lipschitz norms of $\psi$ and $\chi$ and their first order derivatives. Arguing as in Remark \ref{welldefi} we have
\begin{align*}
&\left\vert\int_0^T\int_{\mathbb{R}^{4d}\setminus \Delta} \left((\nabla_v\psi^R-\nabla_v\psi)-(\nabla_v\psi^{R}\,'-\nabla_v\psi\,')\right)^\top D^2 W(x-x')(v'-v)\,d(f_t\otimes df_t)\,dt\right\vert\\
&\quad \lesssim\int_0^T \int_{\mathbb{R}^{4d}\setminus \Delta} \left(\mathds{1}_{\mathbb{R}^{2d}\setminus B_R}(x,v)+\frac{1}{R}\right)\big(|v-v'|^2\phi(|x-x'|)+|x|^{2}+|x'|^{2}+1\big)\,d(f_t\otimes f_t)\,dt,
\end{align*}
which again vanishes as $R\rightarrow \infty$ by the properties \eqref{weakeqfa}-\eqref{weakeqfb}, and we end this part.

\medskip

$\diamond$ {\sc Step 3}. Proof of the claim.\\
We note that the main difficulty is that we cannot take $|x|^2$ or $|v|^2$ as test functions in the weak formulation \eqref{weakeqf} because they do not have compact support. Our proof is then based on a cut-off argument again. For any $\varphi\in C^1_b(\mathbb{R}^{2d})$ we define the truncation 
$$\varphi^R(x,v):=\bar\chi_R(x,v)\,\varphi(x,v),\quad (x,v)\in \mathbb{R}^2,$$
with a similar cut-off function $\bar\chi_R$ as above, and for any arbitrary $\eta\in C^1_c(0,T)$ we can also set
$$\psi_t^R(x,v):=\eta(t)\,\varphi^R(x,v),\quad t\in [0,T],\,(x,v)\in \mathbb{R}^{2d}.$$
Note that we have $\psi^R\in C^1_c([0,T)\times \mathbb{R}^{2d})$ and $\nabla_v\psi_t^R$ is Lipschitz-continuous (uniformly in $t$), and therefore the weak formulation \eqref{weakeqf} implies
\begin{equation}\label{E-g-h-weak-derivative}
\int_0^T \eta'(t) g(t)\,dt=-\int_0^T \eta(t) h(t)\,dt,
\end{equation}
for every $\eta\in C^1_c(0,T)$, where $g,h\in [0,T]\longrightarrow \mathbb{R}$ are the functions given by
\begin{align*}
g(t)&:=\int_{\mathbb{R}^{2d}}\varphi^R(x,v)\,df_t(x,v),\\
h(t)&:=\int_{\mathbb{R}^{4d}\setminus \Delta} \left(\nabla_v\varphi^R(x,v)-\nabla_v\varphi^R(x',v')\right)^\top D^2 W(x-x')(v'-v)\,d(f_t\otimes f_t)+\int_{\mathbb{R}^{2d}}v\cdot \nabla_x\varphi^R\,df_t.
\end{align*}
Note that $g\in L^1(0,T)$ (indeed $L^\infty(0,T)$) and also arguing like in Remark \ref{welldefi} we have
$$\vert h(t)\vert\lesssim [\nabla _v\varphi^R]_{\rm Lip}\int_{\mathbb{R}^{4d}\setminus \Delta} (|v-v'|^2\phi(x-x')+|x|^2+|x'|^2+1)\,d(f_t\otimes f_t)+R\,| \nabla_x\varphi^R|_\infty,$$
for {\it a.e.} $t\in [0,T]$. Since the right hand side is an integrable function by \eqref{weakeqfa}, then $h\in L^1(0,T)$. By \eqref{E-g-h-weak-derivative} it follows that $g'=h$ in distributional sense, and hence $g\in W^{1,1}(0,T)$. Therefore, we can apply the fundamental theorem of calculus to obtain
$$g(t)-g(s)=\int_s^t h(r)\,dr,$$
for {\it a.e.} $t,s\in [0,T]$. Since $g$ is continuous because $\boldsymbol{f}\in C([0,T],\mathcal{P}_2(\mathbb{R}^{2d})-narrow)$ and $\varphi^R$ are bounded, then the above must actually hold for every $t,s\in [0,T]$, that is,
\begin{align}\label{E-weakeqf-truncations-integrated}
\begin{aligned}
\int_{\mathbb{R}^{4d}}\varphi^R&\,df_t-\int_{\mathbb{R}^{4d}}\varphi^R\,df_s\\
&=\int_s^t \int_{\mathbb{R}^{4d}\setminus \Delta} \left(\nabla_v\varphi^R(x,v)-\nabla_v\varphi^R(x',v')\right)^\top D^2 W(x-x')(v'-v)\,d(f_r\otimes f_r)\,dr\\
&\qquad \qquad +\int_s^t \int_{\mathbb{R}^{2d}}v\cdot \nabla_x\varphi^R\,df_r\,dr,
\end{aligned}
\end{align}
for every $t,s\in [0,T]$. Taking $\varphi(x,v)=|v|^2$ and $\bar \chi_R(x,v)=\chi\left(\frac{x}{R^4},\frac{v}{R}\right)$ note that we can take limits as $R\rightarrow \infty$ in \eqref{E-weakeqf-truncations-integrated} and obtain \eqref{E-second-moments-v}. Specifically, we use the monotone convergence theorem on the left-hand side. We argue as in the previous step for the non-linear term, and we observe that by our choice of $\bar\chi_R$ the last term can be bounded by
$$\left\vert\int_s^t \int_{\mathbb{R}^{2d}} v\cdot \nabla_x\varphi^R\,df_r\,dr\right\vert=\left\vert\int_s^t\int_{\mathbb{R}^{2s}}v |v|^2\nabla_x\bar\chi_R(x,v)\,df_r\,dr\right\vert\leq  \frac{T |\nabla_x\chi|_\infty}{R},$$
which vanishes as $R\rightarrow \infty$. With \eqref{E-second-moments-v} in hand, we have $(i)$ and we proceed in an analogous way to prove \eqref{E-second-moments-x} by passing to the limit in \eqref{E-weakeqf-truncations-integrated} for $\varphi(x,v)=|x|^2$ and $\bar\chi_R=\chi\left(\frac{x}{R^2},\frac{v}{R}\right)$. All the terms can be handled analogously except for the last one, for which we have
\begin{align*}
\left\vert v\cdot\nabla_x\varphi^R(x,v)-2x\cdot v\right\vert&=2|x||v|(1-\bar\chi_R(x,v))+|x|^2|v||\nabla_x\bar\chi_R(x,v)|\\
&\leq (1-\bar \chi_R(x,v))(|x|^2+|v|^2)+\frac{|\nabla_x\chi|_\infty}{R}|x|^2,
\end{align*}
for any $(x,v)\in \mathbb{R}^{2d}$. Hence, we obtain
$$\left\vert\int_s^t \int_{\mathbb{R}^{2d}}v\cdot (\nabla_x\varphi^R-2x)\,df_r\,dr\right\vert\lesssim  \int_0^T\int_{\mathbb{R}^{2d}}\left(\mathds{1}_{\mathbb{R}^{2d}\setminus B_R}(x,v)+\frac{1}{R}\right) (|x|^2+|v|^2)\,df_r\,dr,$$
which vanishes with $R\rightarrow \infty$ thanks to $(i)$.
\end{proof}

We proceed with the weak formulation for the first order system \eqref{kurak}. The goal is to provide a notion of solutions that is compatible with \cite[Theorem 5.3]{PP-22-1-arxiv} and, at the same time, which can be obtained from Definition \ref{DCS} by the change of variables from Definition \ref{D-change-variables}.

\begin{defi}[Weak formulation for \eqref{kurak}]\label{D1st}
Consider any $T>0$ and $\mu_0\in{\mathcal P}_2({\mathbb{R}^{2d})}$. We say that  $\boldsymbol{\mu}\in C([0,T],\mathcal{P}(\mathbb{R}^{2d})-\mbox{narrow})$, satisfying the assumption
\begin{equation}\label{E-weakeqF}
\int_0^T\int_{\mathbb{R}^{2d}}|x|^{1-\alpha}\,d\mu_t(x,\omega)\,dt<\infty,
\end{equation}
is a weak measure-valued solution to \eqref{kurak} in the time interval $[0,T]$, subject to initial datum $\mu_0$, if the following weak formulation is verified
\begin{align}\label{weakeqF}
- \int_{\mathbb{R}^{2d}} \eta_0\,d\mu_0 = \int_0^T\int_{\mathbb{R}^{2d}}\Big(\partial_t\eta_t(x,\omega) + (\omega-\nabla W*\rho_t(x))\cdot\nabla_x\eta_t(x,\omega)\Big) \,d\mu_t \,dt,
\end{align}
for all test functions $\eta \in C^1_b([0,T]\times\RR^d)$, compactly supported in $[0,T)$.
\end{defi}

\begin{rem}[Good definition]
Note that, unlike Definition \ref{DCS}, Definition \ref{D1st} does not require a diagonalization akin to \eqref{sym}, since the problematic singular term does not appear. In fact, it is hidden behind the change of variables in Definition \ref{D-change-variables}. Furthermore, equivalents of conditions \eqref{weakeqfa}-\eqref{weakeqfb} do not appear in Definition \ref{D1st} as they are not required to ensure that all the terms in \eqref{weakeqF} are well defined. Indeed, condition \eqref{E-weakeqF} is enough since it guarantees through Corollary \ref{C-sided-Lipschitz}(iii) that $\boldsymbol{u}[\rho]\in L^1(0,T;L^1_{\mu_t}(\mathbb{R}^{2d},\mathbb{R}^d))$ is well defined. In Proposition \ref{W2cont} and Lemma \ref{enstrophy} we prove in fact that \eqref{weakeqfa}-\eqref{weakeqfb} are a consequence of \eqref{E-weakeqF}-\eqref{weakeqF}.
\end{rem}

\begin{pro}\label{W2cont}
Consider any $T>0$ and $\mu_0\in \mathcal{P}_2(\mathbb{R}^{2d})$, and let $\boldsymbol{\mu}$ be any weak measure-valued solution to \eqref{kurak} with initial datum $\mu_0$ in the sense of Definition \ref{D1st}. Then, we have
\begin{enumerate}[label=(\roman*)]
\item (Second-order moments) The following uniform bound holds true:
\begin{equation}\label{kurakmoment1}
    \sup_{t\in[0,T]}\int_{\RR^{2d}}|x|^2 + |\omega|^2\,d\mu_t(x,\omega) < +\infty.
\end{equation}
\item (Time-continuity) The solution has improved time-continuity:
$$\boldsymbol{\mu}\in C([0,T],\mathcal{P}_2(\mathbb{R}^{2d})-W_2).$$
\end{enumerate}
\end{pro}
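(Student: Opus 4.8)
The structure mirrors the proof of Proposition \ref{W2contf}: first establish two integral identities describing the evolution of the $x$- and $\omega$-second-order moments along a weak solution, then deduce (i) and (ii) from them, and finally prove the identities by a cut-off argument. The starting observation is that the $\omega$-marginal is preserved in time: taking test functions $\eta_t(x,\omega)=\zeta(\omega)\chi_R(x)\theta(t)$ in \eqref{weakeqF}, the spatial gradient term drops out as $R\to\infty$ (using \eqref{E-weakeqF} to control $\boldsymbol{u}[\rho_t]$ in $L^1$ via Corollary \ref{C-sided-Lipschitz}(iii)), so $\int \zeta\,d\mu_t=\int\zeta\,d\mu_0$ for all bounded continuous $\zeta$; hence $\int|\omega|^2\,d\mu_t=\int|\omega|^2\,d\mu_0$ for all $t$ once we know this moment is finite. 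Since $\mu_0\in\mathcal{P}_2(\mathbb{R}^{2d})$, this immediately gives the uniform bound on the $\omega$-moments in \eqref{kurakmoment1}.

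\smallskip

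$\diamond$ \emph{The $x$-moment identity.} I claim that
\begin{equation*}
\int_{\mathbb{R}^{2d}}|x|^2\,d\mu_t = \int_{\mathbb{R}^{2d}}|x|^2\,d\mu_s + 2\int_s^t\int_{\mathbb{R}^{2d}} x\cdot\big(\omega-\nabla W*\rho_r(x)\big)\,d\mu_r\,dr,
\end{equation*}
for all $0\le s\le t\le T$. As in Step 3 of the previous proof, $|x|^2$ is not admissible directly, so I truncate: set $\varphi^R(x)=\bar\chi_R(x)|x|^2$ and $\eta_t^R=\theta(t)\varphi^R(x)$ for $\theta\in C^1_c(0,T)$, apply \eqref{weakeqF}, identify the resulting distributional identity $g'=h$ with $g(t)=\int\varphi^R\,d\mu_t$ and $h(t)=\int\nabla_x\varphi^R\cdot\boldsymbol{u}[\rho_t]\,d\mu_t$, use the fundamental theorem of calculus, and pass to the limit $R\to\infty$. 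The limit of the left side uses monotone convergence; for the right side, the key bound is $|\nabla_x\varphi^R(x)\cdot\boldsymbol{u}[\rho_r](x,\omega)|\lesssim (|x|^2+|\omega|^2)(1+|x|^{-\alpha})+\ldots$, so the $x$-second moment satisfies a Gronwall-type inequality of the form $m(t)\le m(s)+C\int_s^t (1+m(r))\,dr + C\int_s^t\!\int |x|^{1-\alpha}\,d\mu_r\,dr$ after absorbing the convolution term via $|\nabla W*\rho_r(x)|\lesssim 1+|x|^{1-\alpha}+\int|x'|^{1-\alpha}d\rho_r$ and interpolation; assumption \eqref{E-weakeqF} makes the last term integrable, so Gronwall closes the uniform bound $\sup_{t}\int|x|^2\,d\mu_t<\infty$, completing (i). With this in hand one also promotes the $x$-moment map to an absolutely continuous function of $t$.

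\smallskip

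$\diamond$ \emph{Time-continuity.} Once both second moments are uniformly bounded and $t\mapsto\int|x|^2+|\omega|^2\,d\mu_t$ is continuous (the $\omega$-part is even constant, the $x$-part is absolutely continuous by the identity just proved), statement (ii) follows exactly as in Step 1 of the proof of Proposition \ref{W2contf}: narrow continuity of $\boldsymbol{\mu}$ plus convergence of second moments along any sequence $t_n\to t_*$ yields $W_2$-convergence by \cite[Theorem 6.9]{V-09}.

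\smallskip

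\textbf{Main obstacle.} The delicate point, absent from Proposition \ref{W2contf}, is that the $x$-moment bound is not monotone and must be closed by a Gronwall argument; this requires carefully controlling $\nabla W*\rho_r(x)$ — which grows like $|x|^{1-\alpha}$ and, being a convolution, also depends on the (a priori only first-moment-of-order-$1-\alpha$) tails of $\rho_r$ — against the quadratic weight, and it crucially exploits \eqref{E-weakeqF} to make the forcing term integrable in time. The preservation of the $\omega$-marginal, while conceptually the cleanest ingredient, also needs the cut-off limit to be justified using Corollary \ref{C-sided-Lipschitz}(iii), so one should verify that the $R\to\infty$ passage there is uniform enough; this is routine given \eqref{E-weakeqF} but is where the hypotheses of Definition \ref{D1st} are genuinely used.
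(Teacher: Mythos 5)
Your approach is a genuinely different route from the paper's, and it has a gap at the crucial point. You try to mirror the proof of Proposition~\ref{W2contf}: truncate the quadratic observable, derive a weak $x$-moment identity, then close with a Gr\"onwall inequality of the form
$m(t)\le m(0)+C\int_0^t(1+m(r))\,dr+C\int_0^t M_{1-\alpha}(\rho_r)\,dr$.
The problem is that this inequality is stated in terms of the untruncated moment $m(t)=\int|x|^2\,d\mu_t$, which you do not yet know to be finite for $t>0$; Gr\"onwall is then vacuous. In Proposition~\ref{W2contf} this circularity is absent because Definition~\ref{DCS} \emph{assumes} the bounds \eqref{weakeqfa}--\eqref{weakeqfb}, so the cut-off identity there is passed to the limit against already-known integrable majorants. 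Definition~\ref{D1st} only gives the much weaker $(1-\alpha)$-moment control \eqref{E-weakeqF}, so that argument does not transfer. And if you try to run the Gr\"onwall on the truncated $m_R$ instead, the estimate does not close: the terms $|x||\omega|$ and $|x|\cdot|\nabla W*\rho_r|$ produce, after Young, either the full $m(r)$, or $m_{2R}(r)$, or $M_{1-\alpha}(\rho_r)^2$, none of which are controlled uniformly in $R$ by $m_R$ plus an $L^1(0,T)$ function. The truncation also destroys the sign structure coming from Lemma~\ref{szwarc}(iii) that makes the \emph{formal} identity $\frac{d}{dt}m\le m+n$ look so clean, because the symmetrization $\int(\nabla\varphi^R(x)-\nabla\varphi^R(x'))\cdot\nabla W(x-x')$ picks up a remainder that again requires a second moment.

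The paper avoids all of this by going \emph{Lagrangian} rather than Eulerian. Step~1 of the actual proof invokes Corollary~\ref{C-Lagrangian-solution}, where the Ambrosio--Crippa superposition principle (\cite[Theorem 12]{AC-14}) is applied under the hypothesis $\int_0^T\int\frac{|\boldsymbol{v}_t|}{1+|x|+|\omega|}\,d\mu_t\,dt<\infty$, which is exactly \eqref{E-weakeqF} by Corollary~\ref{C-sided-Lipschitz}(iii). This yields $\mu_t=Z(t;\cdot,\cdot)_{\#}\mu_0$, and the second-moment bound becomes a Gr\"onwall estimate at the \emph{ODE level} on the trajectory $X(t;x,\omega)$, using sublinear growth of $\boldsymbol{u}[\rho_t]$; no a-priori moment finiteness is needed because the bound $|X(t;x,\omega)|\le C_T(1+|x|^2+|\omega|^2)^{1/2}$ holds pointwise and is then integrated against $\mu_0\in\mathcal{P}_2$. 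If you want to salvage an Eulerian proof, you would have to supply a continuation/openness argument for the finiteness of $m$, which is precisely the non-trivial part you have not addressed.

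Two smaller remarks. Your observation that the $\omega$-marginal is invariant and hence $\int|\omega|^2\,d\mu_t$ is constant is correct and a nice shortcut; the paper obtains the $\omega$-moment bound automatically from the Lagrangian representation $\omega\mapsto\omega$. For part (ii), your route (narrow continuity plus convergence of second moments via \cite[Theorem~6.9]{V-09}) is valid once (i) is secured, but the paper proves something stronger: it shows $\boldsymbol{\mu}\in \operatorname{Lip}(0,T;\mathcal{P}_{2,\nu}(\mathbb{R}^{2d}))$ via the bound $\sup_t\int|\boldsymbol{u}[\rho_t]|^2\,d\mu_t<\infty$ and the embedding into $\mathcal{P}_2$, which gives a quantitative Lipschitz estimate in $W_2$, not mere continuity. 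That quantitative control is reused later (e.g.\ in Corollary~\ref{C-measure-sol-gradient-flows}), so it is worth retaining.
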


\begin{proof}

$\diamond$ {\sc Step 1}. Proof of $(i)$.\\
By virtue of Corollary \ref{C-Lagrangian-solution}, we infer that $\boldsymbol{\mu}$ is a Lagrangian solution. Thus $\mu_t=Z(t;\cdot,\cdot)_{\#}\mu_0$ for all $t\in [0,T]$, where $Z=Z(t;x,\omega)$ is the forward-unique characteristic flow solving \eqref{E-characteristic-system} in the sense of Carath\'eodory. Therefore, we obtain
\begin{equation}\label{E-moments-push-forward}
\int_{\mathbb{R}^{2d}}|x|^2+|\omega|^2\,d\mu_t(x,\omega)=\int_{\mathbb{R}^{2d}}|X(t;x,\omega)|^2\,d\mu_0(x,\omega)+\int_{\mathbb{R}^{2d}}|\omega|^2\,d\mu_0(x,\omega),
\end{equation}
for all $t\in [0,T]$. Thereby, all that we need to do is to control the expansion of the characteristic trajectories \eqref{E-characteristic-system} in the first term above. By the fundamental theorem of calculus (which holds because Carath\'eodory solutions are absolutely continuous) and the sublinear growth of the velocity field in Corollary \ref{C-sided-Lipschitz}(iii) we obtain the following control
\begin{align*}
|X(t;x,\omega)-x|&\leq \int_0^t |\boldsymbol{u}[\rho_s](X(s;x,\omega),\omega)|\,ds\\
&\leq \frac{M}{1-\alpha}+T\,|\omega|+\frac{1}{1-\alpha}\int_0^t|X(s;x,\omega)|^{1-\alpha}\,ds\\
&\leq \frac{M}{1-\alpha}+\frac{\alpha}{1-\alpha}+T\,(|x|+|\omega|)+\int_0^t|X(s;x,\omega)-x|\,dx,
\end{align*}
for all $t\in [0,T]$, where $M:=\int_0^T\int_{\mathbb{R}^{2d}}|x|^{1-\alpha}\,d\mu_t(x,\omega)\,dt$, and we have interpolated lower-order moments by first-order moments. Therefore, Gr\"{o}nwall's lemma implies that
$$|X(t;x,\omega)|\leq C_T \,(1+|x|^2+|\omega|^2)^{1/2},$$
for all $t\in [0,T]$, and some $C_T>0$. Using the above control on \eqref{E-moments-push-forward} yields
$$\int_{\mathbb{R}^{2d}}|x|^2+|\omega|^2\,d\mu_t(x,\omega)\leq (1+C_T^2)\int_{\mathbb{R}^{2d}}|x|^2+|\omega|^2\,d\mu_0(x,\omega),$$
and this ends the proof of $(i)$.

\medskip

$\diamond$ {\sc Step 2}. Proof of $(ii)$.\\
By the propagation of moments above, we have 
\begin{equation}\label{W2cont-1}
    \sup_{t\in [0,T]}\int_{\mathbb{R}^{2d}}|\boldsymbol{u}[\rho_t](x,\omega)|^2\,d\mu_t(x,\omega)<\infty
\end{equation}
and therefore $\boldsymbol{\mu}\in {\rm Lip}(0,T;\mathcal{P}_{2,\nu}(\mathbb{R}^{2d}))$, where $({\mathcal P}_{2,\nu}(\mathbb{R}^{2d}), W_{2,\nu})$ with $\nu:=\pi_{\omega\#}\mu_0$ is the {\it fibered Wasserstein space} introduced in \cite{PP-22-1-arxiv}, and also exploited in \cite{P-19-arxiv} and \cite{MP-19-arxiv}. In particular, since we have the embedding $\mathcal{P}_{2,\nu}(\mathbb{R}^{2d})\subset \mathcal{P}_2(\mathbb{R}^{2d})$, then $\boldsymbol{\mu}\in C(0,T;\mathcal{P}_2(\mathbb{R}^{2d})-W_2)$ and specifically
$$W_2(\mu_{t_1},\mu_{t_2})\leq W_{2,\nu}(\mu_{t_1},\mu_{t_2})\leq |t_1-t_2|\,\sup_{t\in [0,T]}\int_{\mathbb{R}^{2d}}\vert \boldsymbol{u}_t(x,\omega)\vert^2\,d\mu_t(x,\omega),$$
for all $t_1,t_2\in [0,T]$.
\end{proof}

\begin{cor}[Fibered gradient flows]\label{C-measure-sol-gradient-flows}
Weak measure-valued solutions to \eqref{kurak} in the sense of Definition \ref{D1st} exist, are unique and they coincide with gradient flows of the energy functional
$$\mathcal{E}[\mu]:=-\int_{\mathbb{R}^{2d}}x\cdot \omega\,d\mu(x,\omega)+\int_{\mathbb{R}^{4d}}W(x-x')\,d\mu(x,\omega)\,d\mu(x',\omega'),\quad \mu\in \mathcal{P}_{2,\nu}(\mathbb{R}^{2d}),$$
with respect to the fibered Wasserstein space $(\mathcal{P}_{2,\nu}(\mathbb{R}^{2d}),W_{2,\nu})$, as established in \cite{PP-22-1-arxiv}.
\end{cor}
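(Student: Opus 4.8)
The plan is to recognize \eqref{kurak} as the continuity equation of the fibered Wasserstein gradient flow of $\mathcal{E}$ and then to transfer existence, uniqueness and the flow characterization directly from the abstract theory of \cite{PP-22-1-arxiv}. The genuinely analytic content has already been isolated — the regularity of solutions in Proposition~\ref{W2cont}, their Lagrangian representation in Corollary~\ref{C-Lagrangian-solution}, and the one-sided Lipschitz and sublinear growth bounds for the singular velocity field in Corollary~\ref{C-sided-Lipschitz} — so what remains is mostly a matter of assembly and of matching the two notions of solution.

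First I would compute the first variation of $\mathcal{E}$: perturbing $\mu$ by a signed measure of zero mass, the bilinear term $-\int x\cdot\omega\,d\mu$ contributes $-x\cdot\omega$ and the interaction term $\int_{\RR^{4d}}W(x-x')\,d\mu\,d\mu$ contributes $W\ast\rho(x)$, up to the constant built into $W$ and the normalization conventions of \cite{PP-22-1-arxiv}. Hence $\nabla_x\frac{\delta\mathcal{E}}{\delta\mu}(x,\omega)=-\omega+\nabla W\ast\rho(x)=-\boldsymbol{u}[\rho](x,\omega)$, and since the divergence in \eqref{kurak} is taken only in $x$, the weak formulation \eqref{weakeqF} is precisely the distributional form of $\partial_t\mu_t=\divop_x(\mu_t\,\nabla_x\frac{\delta\mathcal{E}}{\delta\mu_t})$, namely the fibered gradient-flow equation studied in \cite{PP-22-1-arxiv}.

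Next I would verify that $\mathcal{E}$ is admissible for that theory: properness, lower semicontinuity and coercivity on $\mathcal{P}_{2,\nu}(\RR^{2d})$ are routine, and $\lambda$-semiconvexity along (generalized) geodesics of the fibered Wasserstein space holds because $W$ is convex — it is the positive power $|x|^{2-\alpha}$ of $|x|$ with $2-\alpha\in(1,2)$, and indeed $D^2W\succeq 0$ — while the smooth, quadratically bounded cross term $-\int x\cdot\omega\,d\mu$ merely shifts the modulus $\lambda$ by a finite amount; Corollary~\ref{C-sided-Lipschitz} is the quantitative counterpart of these properties in the present weakly singular regime. Granting this, \cite[Theorem~5.3]{PP-22-1-arxiv} (equivalently \cite[Theorem~B]{PP-22-1-arxiv}) yields a unique gradient flow of $\mathcal{E}$ issued from any $\mu_0\in\mathcal{P}_{2,\nu}(\RR^{2d})$, characterized as the solution of the continuity equation whose velocity field is the minimal-norm element of the relevant subdifferential in the sense of \cite{AGS-08}.

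It then remains to match the two notions. If $\boldsymbol{\mu}$ satisfies Definition~\ref{D1st}, Proposition~\ref{W2cont} gives $\boldsymbol{\mu}\in\mathrm{Lip}(0,T;\mathcal{P}_{2,\nu}(\RR^{2d}))$ with uniformly bounded second-order moments, while \eqref{E-weakeqF}, \eqref{weakeqF} and Corollary~\ref{C-sided-Lipschitz}(iii) show that $\boldsymbol{\mu}$ solves the continuity equation with $\boldsymbol{u}[\rho_t]=-\nabla_x\frac{\delta\mathcal{E}}{\delta\mu_t}\in L^1(0,T;L^1_{\mu_t})$; since this field is itself an $x$-gradient and equals the Wasserstein gradient of $\mathcal{E}$ at $\mu_t$, it is automatically the minimal selection, so $\boldsymbol{\mu}$ is the gradient flow of $\mathcal{E}$. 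Conversely, the gradient flow of the previous step solves the continuity equation, hence \eqref{weakeqF}, and its uniformly bounded moments yield \eqref{E-weakeqF}, so it is a solution in the sense of Definition~\ref{D1st}; uniqueness then follows from uniqueness of the gradient flow, consistently with the forward-uniqueness of characteristics in Corollary~\ref{C-Lagrangian-solution}. The one step I expect to require real care is the admissibility check for $\mathcal{E}$: confirming that the singular, only weakly differentiable kernel $W$ still produces an energy fitting the semiconvexity and coercivity hypotheses of the fibered-gradient-flow framework of \cite{PP-22-1-arxiv}. Once that is in place the corollary follows by soft arguments.
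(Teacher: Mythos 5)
Your proposal is correct and follows essentially the same route as the paper: recognize that Proposition~\ref{W2cont}(ii) places $\boldsymbol{\mu}$ in $AC(0,T;\mathcal{P}_{2,\nu}(\RR^{2d}))$, identify $\boldsymbol{u}[\rho_t]$ with $-\nabla_{W_{2,\nu}}\mathcal{E}[\mu_t]$ so that Definition~\ref{D1st} coincides with the distributional/gradient-flow notion of \cite{PP-22-1-arxiv}, and then invoke \cite[Theorem~5.3]{PP-22-1-arxiv} together with the convexity of $W$ (Appendix~\ref{Appendix-convexity-W}) for existence and uniqueness. Your expansion of the first-variation computation and the admissibility checklist (lower semicontinuity, coercivity, semiconvexity along fibered geodesics) is just a more explicit rendering of what the paper delegates in one line to "the machinery" of \cite{PP-22-1-arxiv} and to Appendix~\ref{Appendix-convexity-W}.
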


\begin{proof}
Since $\boldsymbol{\mu}$ verifies the weak formulation \eqref{weakeqF} in the distributional sense and we also have $\boldsymbol{\mu}\in AC(0,T;\mathcal{P}_{2,\nu}(\mathbb{R}^{2d}))$ by Proposition \ref{W2cont}(ii), then $\boldsymbol{\mu}$ amounts to a distributional solution in the sense of \cite[Definition 5.1]{PP-22-1-arxiv}. Using the machinery in that paper we are able to identify
$$\boldsymbol{u}[\rho_t]=-\nabla_{W_{2,\nu}}\mathcal{E}[\mu_t],\quad {\it a.e.}\ t\in [0,T],$$
and therefore weak measure-valued solutions in the sense of Definition \ref{D1st} simply coincide with fibered gradient flows of $\mathcal{E}$ in the sense of \cite[Definition 3.30]{PP-22-1-arxiv}. In fact, by virtue of the convexity properties of $W$ (thus also $\mathcal{E}$) in Appendix \ref{Appendix-convexity-W}, the machinery developed in \cite[Theorem 5.3]{PP-22-1-arxiv} applies, and therefore the existence and uniqueness of such fibered gradient flows follow.
\end{proof}

\section{Energy estimate for the first-order equation \eqref{kurak}}\label{sec:energest}

The goal of this section is to derive a first-order equivalent of the energy estimate \eqref{weakeqfa} from the weak formulation \eqref{weakeqF}. Note that we have already derived estimate \eqref{weakeqfb} in Proposition \ref{W2cont}. We aim to prove the following lemma.

\begin{lem}[Energy dissipation inequality]\label{enstrophy}
Consider any $T>0$ and $\mu_0\in \mathcal{P}_2(\mathbb{R}^{2d})$, and let $\boldsymbol{\mu}$ be the weak measure-valued solution to \eqref{kurak} with initial datum $\mu_0$ in the sense of Definition \ref{D1st}. Then, the following estimate holds true:
\begin{multline}\label{E-enstrophy}
\int_{\mathbb{R}^{2d}}\vert \boldsymbol{u}[\rho_T]\vert^2\,d\mu_T-\int_{\mathbb{R}^{2d}}\vert \boldsymbol{u}[\rho_0]\vert^2\,d\mu_0\\
+\int_0^T \int_{\mathbb{R}^{4d}\setminus \Delta}\vert\boldsymbol{u}[\rho_t](x,\omega)-\boldsymbol{u}[\rho_t](x',\omega') \vert^2\phi(|x-x'|)\,d(\mu_t \otimes\mu'_t)\,dt\leq 0,
\end{multline}
where, as usual, $\rho_t=\pi_{x\#}\mu_t$. In particular, the set ${\rm Col}$ of collisions of $\boldsymbol{\mu}$  defined in \eqref{colisionset} 
is negligible in the sense that $\lambda({\rm Col}) = 0$, where $\lambda = \mu_t\otimes\mu_t\otimes{\mathcal L}^1_{\mathlarger{\llcorner}[0,T]}(t)$ and ${\mathcal L}^1_{\mathlarger{\llcorner}[0,T]}$ denotes the 1D Lebesgue measure on the time interval $[0,T]$. 
\end{lem}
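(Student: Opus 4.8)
The plan is to establish the energy inequality \eqref{E-enstrophy} by testing the weak formulation \eqref{weakeqF} against a regularized version of the natural energy $\eta_t(x,\omega)=\tfrac12|\boldsymbol{u}[\rho_t](x,\omega)|^2=\tfrac12|\omega-\nabla W*\rho_t(x)|^2$, and then passing to the limit in the regularization. Since this quantity is neither compactly supported nor smooth (the kernel $\nabla W$ is only $C^{1,1-\alpha}$-like and $\rho_t$ is a measure), I would proceed in three layers of approximation. First, I would mollify in the spatial variable: replace $\nabla W$ by $\nabla W_\varepsilon:=\nabla W * \chi_\varepsilon$ for a standard mollifier $\chi_\varepsilon$, so that $\nabla W_\varepsilon*\rho_t$ is smooth in $x$ and, crucially, still satisfies the one-sided Lipschitz / sublinear-growth bounds from Corollary \ref{C-sided-Lipschitz} uniformly in $\varepsilon$ (this is where the convexity structure of $W$ enters). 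Second, I would truncate in phase space by a cut-off $\chi_R$ as in the proof of Proposition \ref{W2cont}, exploiting the uniform second-order moment bound \eqref{kurakmoment1} to control the tails. Third, I would localize in time by inserting a factor $\eta(t)\in C^1_c(0,T)$ approximating $\mathds{1}_{[0,T]}$, or alternatively work with the integrated identity directly as in Step 3 of the proof of Proposition \ref{W2contf}.

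The heart of the computation is the time derivative of $t\mapsto\tfrac12\int |\omega-\nabla W_\varepsilon*\rho_t(x)|^2\,d\mu_t$. Writing $\boldsymbol{u}_\varepsilon[\rho_t](x,\omega):=\omega-\nabla W_\varepsilon*\rho_t(x)$, the chain rule produces two contributions: the transport term, in which the test function is (up to the regularization) $\tfrac12|\boldsymbol{u}_\varepsilon|^2$ and $\nabla_x\eta_t = -D^2W_\varepsilon*\rho_t(x)\,\boldsymbol{u}_\varepsilon$, giving
\begin{align*}
\int_{\RR^{2d}}\boldsymbol{u}_\varepsilon[\rho_t](x,\omega)\cdot\nabla_x\eta_t\,d\mu_t
= -\int_{\RR^{4d}}\boldsymbol{u}_\varepsilon[\rho_t](x,\omega)^\top D^2W_\varepsilon(x-x')\,\boldsymbol{u}_\varepsilon[\rho_t](x,\omega)\,d(\mu_t\otimes\mu_t');
\end{align*}
and the term $\partial_t\eta_t$ coming from the time-dependence of $\rho_t$, namely $\partial_t\eta_t=-\boldsymbol{u}_\varepsilon[\rho_t](x,\omega)\cdot(\nabla W_\varepsilon*\partial_t\rho_t)(x)$, into which one substitutes the continuity equation for $\rho_t$ (obtained by integrating \eqref{weakeqF} in $\omega$) to rewrite $\partial_t\rho_t = -\divop_x(\int \boldsymbol{u}[\rho_t]\,d\mu_t^{(\cdot)})$. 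After substituting and using $\nabla(\nabla W_\varepsilon) = D^2 W_\varepsilon$, this second term becomes a double integral against $D^2W_\varepsilon(x-x')$ as well. Adding the two and symmetrizing in $(x,\omega)\leftrightarrow(x',\omega')$ — exactly the diagonalization trick of \eqref{sym} — the sum collapses to
\begin{align*}
-\frac12\int_{\RR^{4d}}\big(\boldsymbol{u}_\varepsilon[\rho_t](x,\omega)-\boldsymbol{u}_\varepsilon[\rho_t](x',\omega')\big)^\top D^2W_\varepsilon(x-x')\big(\boldsymbol{u}_\varepsilon[\rho_t](x,\omega)-\boldsymbol{u}_\varepsilon[\rho_t](x',\omega')\big)\,d(\mu_t\otimes\mu_t'),
\end{align*}
and by the nonnegativity of $D^2 W_\varepsilon$ along the relevant directions (Lemma \ref{szwarc}(ii), stable under mollification) this is $\le 0$. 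Integrating in $t$ gives the inequality with $\varepsilon$-regularized quantities.

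The main obstacle is the passage to the limit $\varepsilon\to0$ (and the removal of the truncations). For the boundary terms $\int|\boldsymbol{u}_\varepsilon[\rho_t]|^2\,d\mu_t$ at $t=0,T$ this is routine: $\nabla W_\varepsilon*\rho_t\to\nabla W*\rho_t$ pointwise with a uniform $|x|^{1-\alpha}$-type bound, so dominated convergence applies using \eqref{kurakmoment1}. The delicate point is the dissipation integrand: one must show
\begin{align*}
\int_0^T\!\!\int_{\RR^{4d}}\big(\boldsymbol{u}_\varepsilon[\rho_t](x,\omega)-\boldsymbol{u}_\varepsilon[\rho_t](x',\omega')\big)^\top D^2W_\varepsilon(x-x')\big(\cdots\big)\,d(\mu_t\otimes\mu_t')\,dt
\longrightarrow
\int_0^T\!\!\int_{\RR^{4d}\setminus\Delta}\vert\boldsymbol{u}[\rho_t](x,\omega)-\boldsymbol{u}[\rho_t](x',\omega')\vert^2\phi(|x-x'|)\,d(\mu_t\otimes\mu_t')\,dt,
\end{align*}
and here the right-hand side could \emph{a priori} be $+\infty$ — which is fine, since we only need a lower bound. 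I would therefore use Fatou's lemma: the integrand converges pointwise on $\RR^{4d}\setminus\Delta$ (where $|x-x'|$ may still vanish on $\mathrm{Col}$, but $\boldsymbol{u}[\rho_t](x,\omega)-\boldsymbol{u}[\rho_t](x',\omega')=\omega-\omega'$ there and $D^2W_\varepsilon(x-x')\to D^2W(x-x')$ only when $x\ne x'$, so one must argue that the mollified kernel's blow-up near $\mathrm{Col}$ only helps the inequality, being a nonnegative contribution), and a careful splitting into the region $\{|x-x'|>\delta\}$, where dominated convergence with the bound from Remark \ref{welldefi} applies, and $\{|x-x'|\le\delta\}$, where the integrand is $\ge -C(|x|^2+|x'|^2+|\omega|^2+|\omega'|^2+1)\mathds{1}_{\{|x-x'|\le\delta\}}$ uniformly in $\varepsilon$ by the same Lipschitz-cancellation estimate, so its negative part is uniformly integrable and its contribution vanishes as $\delta\to0$. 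Combining, $\limsup_{\varepsilon\to0}$ of the $\varepsilon$-dissipation is $\ge$ the claimed dissipation, yielding \eqref{E-enstrophy}. Finally, the statement that $\lambda(\mathrm{Col})=0$ is immediate: on $\mathrm{Col}$ one has $\boldsymbol{u}[\rho_t](x,\omega)-\boldsymbol{u}[\rho_t](x',\omega')=\omega-\omega'\ne0$ while $\phi(|x-x'|)=\phi(0)=+\infty$, so if $\lambda(\mathrm{Col})>0$ the dissipation integral in \eqref{E-enstrophy} would be $+\infty$, contradicting the finite (indeed nonpositive) bound; hence $\lambda(\mathrm{Col})=0$.
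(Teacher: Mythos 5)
The paper takes a genuinely different route: it proves \eqref{E-enstrophy} first for atomic (empirical) solutions by carefully tracking the particle dynamics \eqref{kurap} across sticking and collision times (exploiting the one-sided Lipschitz structure and an improved $W^{1,1}$ regularity of the velocities between collisions, {\it cf.} \cite{PPS-21}), and then transfers it to general initial data by the mean-field limit of Proposition \ref{P-mean-field-local-in-time}, together with the continuity of $E_k$ and the lower semicontinuity of the total dissipation $\mathcal{D}^T$ established in Lemma \ref{Dlsc}. Your proposal, by contrast, tests the weak formulation directly with a regularized energy, which would be a cleaner route if it worked.

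However, there is a concrete gap in the core computation. In the weak formulation \eqref{weakeqF} the transport velocity is the \emph{unregularized} field $\boldsymbol{u}[\rho_t]=\omega-\nabla W*\rho_t(x)$, whereas your test function is $\eta_t=\tfrac12|\boldsymbol{u}_\varepsilon[\rho_t]|^2$. Your display replaces $\boldsymbol{u}[\rho_t]$ by $\boldsymbol{u}_\varepsilon[\rho_t]$ in the transport term, but the correct transport contribution is $\int\boldsymbol{u}[\rho_t]\cdot\nabla_x\eta_t\,d\mu_t=-\int\boldsymbol{u}[\rho_t]^\top(D^2W_\varepsilon*\rho_t)\,\boldsymbol{u}_\varepsilon[\rho_t]\,d\mu_t$, and the $\partial_t\eta_t$ term (via Lemma \ref{partialtu}) involves one $\boldsymbol{u}_\varepsilon$ and one $\boldsymbol{u}$ as well. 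Carrying the symmetrization through, what one actually obtains is the \emph{mixed} bilinear form
\begin{align*}
-\frac12\int_{\RR^{4d}}\big(\boldsymbol{u}_\varepsilon[\rho_t]-\boldsymbol{u}_\varepsilon[\rho_t]'\big)^\top D^2W_\varepsilon(x-x')\big(\boldsymbol{u}[\rho_t]-\boldsymbol{u}[\rho_t]'\big)\,d(\mu_t\otimes\mu_t'),
\end{align*}
not the purely $\boldsymbol{u}_\varepsilon$-symmetric form in your display. Because the two factors are different vectors, the non-negative-definiteness of $D^2W_\varepsilon$ does not give a sign, so the claim that the right-hand side is $\le 0$ at finite $\varepsilon$ fails, and the chain ``integrate in $t$, then let $\varepsilon\to0$ and use Fatou'' collapses.

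To salvage the idea one would split $\boldsymbol{u}_\varepsilon=\boldsymbol{u}+(\boldsymbol{u}_\varepsilon-\boldsymbol{u})$, producing a genuinely signed term plus an error $\big((\boldsymbol{u}_\varepsilon-\boldsymbol{u})-(\boldsymbol{u}_\varepsilon'-\boldsymbol{u}')\big)^\top D^2W_\varepsilon(\boldsymbol{u}-\boldsymbol{u}')$ which must be shown to vanish as $\varepsilon\to 0$. Controlling that error is precisely the content of Lemma \ref{gepg}, which for $\alpha\in(\tfrac23,1)$ needs the extra moment condition \eqref{nudiag0} on $\nu$ --- an assumption that Lemma \ref{enstrophy} does \emph{not} have. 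This is exactly the obstruction that the paper's particle-plus-mean-field strategy circumvents: for atomic solutions the dissipation identity can be derived exactly, with no mollification mismatch, and the passage to general data only uses continuity of $E_k$ and lower semicontinuity of $\mathcal{D}^T$, neither of which needs \eqref{nudiag0}. Your closing argument that $\lambda(\mathrm{Col})=0$ is fine and matches the paper's, but it of course relies on \eqref{E-enstrophy}, which you have not yet established.
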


Before proving Lemma \ref{enstrophy}, let us establish the following shorthand notation for the {\it average kinetic energy} $E_k$ and the {\it dissipation of kinetic energy} $D_k$ (also called {\it enstrophy} in fluid mechanics) of any weak measure-valued solution $\boldsymbol{\mu}$ of \eqref{kurak}:
\begin{align*}
E_k[\mu_t] &:= \frac{1}{2}\int_{\RR^{2d}} \vert \boldsymbol{u}[\rho_t]\vert^2\,d\mu_t,\\
D_k[\mu_t] &:= \frac{1}{2}\int_{\RR^{4d}\setminus\Delta}\vert \boldsymbol{u}[\rho_t](x,\omega)-\boldsymbol{u}[\rho_t](x',\omega')\vert^2\phi(|x-x'|) \,d(\mu_t\otimes \mu_t'),
\end{align*}
see $\Delta$ in \eqref{diagonalset}.
\medskip

\begin{proof}[Proof of Lemma \ref{enstrophy}]

~

\medskip

The proof follows three steps: {\sc Step 1:} introduction of discrete versions of $E_k$ and $D_k$ for atomic measures, {\sc Step 2:} proof of \eqref{E-enstrophy} for atomic measures, {\sc Step 3:} proof of \eqref{E-enstrophy} for general measures via mean field limit, supplemented by uniqueness granted by  Corollary \ref{C-measure-sol-gradient-flows}.

\medskip

$\diamond\ $ {\sc Step 1}. The family of atomic solutions.

First, let us restrict our considerations to the special family of atomic solutions of \eqref{kurak} with $N$ atoms, {\it i.e.} solutions issued at atomic initial data with the shape of an empirical measure:
\begin{equation}\label{empirmu0}
\mu_0^N(x,\omega) = \frac{1}{N}\sum_{i=1}^N\delta_{x_{i0}^N}(x)\otimes \delta_{\omega_i^N}(\omega).    
\end{equation}
Consider the classical solution $(x_1^N(t),\ldots,x_N^N(t))$ of the Kuramoto-type particle system \eqref{kurap} with initial datum $(x_{1\,0}^N,\ldots,x_{N\,0}^N)$. Note that such a solution exists globally-in-time because the right hand side of the ODE system is continuous and has sub-linear growth. In addition, since $\nabla W$ is non-decreasing (thus, $-\nabla W$ is one-sided Lipschitz) by  Lemma \ref{szwarc}, then the solution is unique forwards-in-time by standard arguments (see also \cite{PPS-21} for similar results). Let us define the associated empirical measure by the relation
\begin{equation}\label{atom}
\mu^N_t(x,\omega) = \frac{1}{N}\sum_{i=1}^N\delta_{x_i^N(t)}(x)\otimes \delta_{\omega_i^N}(\omega).
\end{equation}
By the chain rule we readily infer that $\boldsymbol{\mu}^N$ above yields the unique weak-measure valued solution to \eqref{kurak} issued at $\mu^N_0$, {\it cf.} \eqref{empirmu0}. Consequently, in order to prove \eqref{E-enstrophy} for atomic solutions of the form \eqref{atom} it is sufficient to show its equivalent version for the particle system. To such an end, we denote the discrete versions of the kinetic energy and enstrophy as follows
\begin{align}\label{odetroph}
\begin{split}
E^N_k(t) &:= E_k[\mu_t^N] = \frac{1}{2N}\sum_{i=1}^N \vert u_i^N(t)\vert^2,\\
D^N_k(t) &:= D_k[\mu_t^N] = \frac{1}{2N^2}\sum_{i=1}^N\sum_{j\in\{1,...,N\}\setminus S_i^N(t)}^N|u_i^N(t)-u_j^N(t)|^2\phi(|x_i^N(t)-x_j^N(t)|),\\ 
S_i^N(t) &:= \{j\in\{1,...,N\}:\, x_i^N(t)=x_j^N(t),\quad \omega_i^N=\omega_j^N\},\\
u_i^N(t) &:= \dot{x}_i^N(t) \stackrel{\eqref{kurap}}{=} \omega_i^N + \frac{1}{N}\sum_{j=1}^N\nabla W(x_j^N(t)-x_i^N(t)).
\end{split}
\end{align}

\medskip

Note that while $E_k^N(t)$ is well defined, $D^N_k(t)$ may be ill defined whenever $x_i^N(t)=x_j^N(t)$ and $u_i^N(t)\neq u_j^N(t)$. This will be the main issue to overcome along the next step. The general idea is to differentiate $E^N_k(t)$ and follow the steps of similar proofs for the Kuramoto model. The main obstruction is that at any time $t^*$ such that $x_i^N(t^*)=x_j^N(t^*)$ for some $i,j=1,...,N$, the function $t\mapsto \nabla W(x_i^N(t)-x_j^N(t))$ is not necessarily differentiable. 
\medskip

$\diamond$ {\sc Step 2}. Proof of \eqref{E-enstrophy} for atomic measures.

To circumvent the above-mentioned problem we proceed similarly to \cite{PPS-21} for the weakly singular Kuramoto model (see also \cite{P-14} for the weakly singular Cucker-Smale system) by carefully taking care of the possible sticking and collisions between particles.
We note that since  $\nabla W$ is non-decrasing (thus, $-\nabla W$ is one sided-Lipschitz) by  Lemma \ref{szwarc} then particles stick together forming a larger cluster after sticking times. Specifically, if $j\in S_i^N(t^*)$ then $x_i^N(t^*)=x_j^N(t^*)$ and $\omega_i^N=\omega_j^N$ so that $u_i^N(t)=u_j^N(t)$ for all $t\geq t^*$. Therefore, $x_i^N(t)=x_j^N(t)$ for all $t\geq t^*$. We can then define the pure collision times by setting $T_0^N=0$ and
$$T_m^N:=\inf\{t>T_{m-1}^N:\,\exists \,i\in \{1,\ldots,N\}\  \mbox{and}\  j\notin S_i^N(T_{m-1}^N)\ \mbox{with}\  x_i^N(t)=x_j^N(t)\},$$
for any $m\in \mathbb{N}$. Note that the times $T_m^N$ account for the possibility of collision between the newly formed clusters by sticking of particles. By the H\"{o}lder regularity of $\nabla W$ we observe that $u_i^N$ appears to be only $(1-\alpha)$-H\"{o}lder continuous in time. However, by \cite[Remark 4.1]{PPS-21} we actually get the improved regularity $u_i^N\in W^{1,1}([T_{m-1}^N,\tau])$ for any $\tau\in (T_{m-1}^N,T_m^N)$ and also
$$\dot{u}_i^N(t)=\frac{1}{N}\sum_{j\notin S_i^N(T_{m-1}^N)}D^2 W(x_j^N(t)-x_i^N(t)) (u_j^N(t)-u_i^N(t)),$$
for a.e. $t\in (T_{m-1}^N,T_m^N)$. By differentiating $E_k^N$ at those times we obtain
\begin{align*}
\frac{d}{dt}E_k^N(t)&=\frac{1}{N}\sum_{i=1}^N \dot{u}_i^N(t)\cdot u_i^N(t)\\
&=\frac{1}{N^2}\sum_{i=1}^N\sum_{j\notin S_i^N(T_{m-1}^N)}u_i^N(t)^\top D^2 W(x_j^N(t)-x_i^N(t)) (u_j^N(t)-u_i^N(t))\\
&=-\frac{1}{2N^2}\sum_{i=1}^N\sum_{j\notin S_i^N(T_{m-1}^N)}[u_i^N(t)-u_j^N(t)]^\top D^2 W(x_i^N(t)-x_j^N(t)) (u_i^N(t)-u_j^N(t)),
\end{align*}
for a.e. $t\in (T_{m-1}^N,T_m^N)$, where in the last line we have symmetrized the sum by switching indices $i$ with $j$, and we have used the symmetry $D^2 W(x)=D^2 W(-x)$. By Lemma \ref{szwarc} on the right-hand side we obtain the energy dissipation inequality
\begin{align*}
    \frac{d}{dt} E_k^N(t)&\leq -D_k^N(t),\\
    D_k^N(t)&=\frac{1}{2N^2}\sum_{i=1}^N\sum_{j\notin S_i^N(T_{m-1}^N)}\vert u_i^N(t)-u_j^N(t)\vert^2\phi(\vert x_i^N(t)-x_j^N(t))
\end{align*}
for a.e. $t\in (T_{m-1}^N,T_m^N)$.
Since $E_k^N\in W^{1,1}([T_{m-1}^N,\tau])$ for any $\tau \in(T_{m-1}^N,T_k^N)$, then
$$\int_{T_{m-1}^N}^{\tau}D_k^N(t)\,dt\leq E_k^N(T_{m-1}^N)-E_k^N(\tau),$$
by integration, for any $\tau \in (T_{m-1}^N,T_m^N)$. Note that $D_k^N\geq 0$ and $E_k^N$ is continuous. Then, we can pass to the limit with $\tau$ to $T_m^N$ by monotone convergence, and sum over $m$ to get
\begin{equation}\label{E-enstrophy-discrete}
E_k^N(T)-E_k^N(0)+\int_0^T D_k^N(t)\,dt\leq 0,
\end{equation}
as long as $\{T_m^N\}_{m\in \mathbb{N}}$ spans all $[0,+\infty)$. However, it might happen that $T_m^N\rightarrow T_\infty^N<\infty$ as $m\rightarrow\infty$. By definition note that $T_\infty^N$ must then be a sticking time. Since there are exactly $N$ particles, then there must exist at most $N-1$ sticking times $T_\infty^N$. Then, we can repeat the above argument finitely many times by replacing $t=0$ by each sticking time $t=T_\infty^N$ and we cover the full lifespan of the solution by countably many intervals of the type $(T_{m-1}^N,T_m^N)$. To conclude, we claim now that $E_k^N(t)=E_k[\mu^N_t]$ and $D_k^N(t)=D_k[\mu^N_t]$ so that the above implies 
\begin{equation}\label{E-enstrophy-discrete-2}
E_k[\mu^N_T]-E_k[\mu^N_0]+\int_0^T D_k[\mu^N_t]\leq 0. 
\end{equation}
To prove such a claim, let us denote the flow of $\boldsymbol{u}[\rho_t^N]$ by $Z^N(t;x,\omega)=(X^N(t;x,\omega),\omega)$. Then, by Corollary \ref{C-Lagrangian-solution} the solution $\boldsymbol{\mu}^N$ must be a Lagrangian solution, {\it i.e.}, $\mu^N_t=Z^N(t;\cdot,\cdot)_{\#}\mu^N_0$ and we obtain
$$x_i^N(t)=X^N(t;x_{i,0},\omega_i),\quad u_i^N(t)=\boldsymbol{u}[\rho_t^N](Z^N(t;x_{i,0},\omega_i)),$$
for any $i=1,\ldots,N$ and each $t\geq 0$. Therefore,
$$E_k^N(t)=\frac{1}{2}\int_{\mathbb{R}^{2d}}\vert \boldsymbol{u}[\rho_t^N](Z^N(t;x,\omega))\vert^2\,d\mu_0^N(x,\omega)=E_k[\mu^N_t],$$
for a.e. $t\in (T_{m-1}^N,T_m^N)$. Similarly for $D_k^N$, let us define
\begin{align*}
S^N(t^*)&:=\{((x,\omega),(x',\omega'))\in\mathbb{R}^{4d}:\,Z^N(t^*;x,\omega)=Z^N(t^*;x',\omega')\},
\end{align*}
for any $t^*>0$. By construction, $S^N(t^*)$ consists of the initial configurations leading to sticking of trajectories at time $t^*>0$ and we have the relation
$$S^N(t^*)=(Z^N(t^*;\cdot,\cdot)\otimes Z^N(t^*;\cdot,\cdot))^{-1}(\Delta),$$
where $\Delta$ is the diagonal set defined in \eqref{diagonalset}. Therefore, we arrive at
\begin{multline*}
D_k^N(t)=\frac{1}{2}\int_{\mathbb{R}^{4d}\setminus S^N(t)}\vert \boldsymbol{u}[\rho_t^N](X^N(t;x,\omega),\omega)-\boldsymbol{u}[\rho_t^N](X^N(t;x',\omega'),\omega')\vert^2\\
\times\phi(\vert X^N(t;x,\omega)-X^N(t;x',\omega')\vert)\,d(\mu^N_0\otimes \mu^N_0)=D_k[\mu^N_t],
\end{multline*}
for a.e. $t\in (T_{m-1}^N,T_m^N)$, where we have used that $S_i^N(T_{m-1}^N)=S_i^N(t)$ for any $t\in (T_{m-1}^N,T_m^N)$ thanks to the absence of collisions in such an interval.

\medskip

$\diamond$ {\sc Step 3}. Proof of \eqref{E-enstrophy} for general measures.

To finish the proof, we perform a classical mean-field argument based on Proposition \ref{P-mean-field-local-in-time}. Specifically, we shall take limits as $N\rightarrow \infty$ in the energy dissipation inequality \eqref{E-enstrophy-discrete-2} in {\sc Step 2}. To that end, we must set a well-prepared sequence of empirical measures $\boldsymbol{\mu}^N$. Namely, fix any weak-measure valued solution $\boldsymbol{\mu}$ to \eqref{kurak} in the sense of Definition \ref{weakeqF} as in the statement, and consider a sequence of empirical measures $\{\boldsymbol{\mu}^N\}_{N\in\mathbb{N}}$ like in \eqref{atom} so that we have
$$\lim_{N\rightarrow\infty}\sup_{t\in [0,T]}W_2(\mu_t^N,\mu_t)=0,$$
according to Proposition \ref{P-mean-field-local-in-time}. We end our proof by noting that
\begin{align*}
E_k[\mu_0] = \lim_{N\rightarrow\infty}E_k[\mu^N_0]\quad\mbox{and}\quad
\int_0^t D_k[\mu_s]\,ds \leq \liminf_{N\rightarrow\infty}\int_0^t D_k[\mu_s^N]\,ds, \quad t\in [0,T],
\end{align*}
which will be proved in the following lemma.
\end{proof}

\medskip

\begin{lem}\label{Dlsc}
The following properties hold true:
\begin{enumerate}[label=(\roman*)]
    \item (Continuity of kinetic energy) The kinetic energy functional $E_k:\mathcal{P}_2(\mathbb{R}^{2d})\longrightarrow [0,+\infty)$ is continuous with $\mathcal{P}_2(\mathbb{R}^{2d})$ endowed with the $W_2$ metric, namely
    $$\lim_{n\rightarrow\infty}W_2(\mu^n,\mu)=0\quad \Longrightarrow \quad \lim_{n\rightarrow\infty} E_k[\mu^n]=E_k[\mu].$$
    \item (Lower semicontinuity of total dissipation) For $T>0$, define the total dissipation functional, that is, $\mathcal{D}^T: C([0,T],\mathcal{P}_2(\mathbb{R}^{2d})-W_2)\longrightarrow [0,+\infty]$ with
    $$\mathcal{D}^T[\boldsymbol{\mu}]:=\int_0^TD_k[\mu_t]\,dt,\qquad \boldsymbol{\mu}\in C([0,T],\mathcal{P}_2(\mathbb{R}^{2d})-W_2).$$
    Then, $\mathcal{D}^T$ is lower semicontinuous in $C([0,T],\mathcal{P}_2(\mathbb{R}^{2d})-W_2)$, namely
    $$\lim_{n\rightarrow\infty}\sup_{t\in [0,T]}W_2(\mu_t^n,\mu)=0\quad \Longrightarrow \quad \mathcal{D}^T[\boldsymbol{\mu}]\leq \liminf_{n\rightarrow \infty}\mathcal{D}^T[\boldsymbol{\mu}^n].$$
\end{enumerate}
\end{lem}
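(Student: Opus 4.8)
The plan is to handle the two items separately, with item (i) being the essential building block for item (ii). For item (i), the key is the explicit structure of $E_k[\mu] = \frac{1}{2}\int |\boldsymbol{u}[\rho](x,\omega)|^2\,d\mu$ with $\boldsymbol{u}[\rho](x,\omega) = \omega - \nabla W*\rho(x)$. I would first use that $W_2$-convergence $\mu^n\to\mu$ forces $\rho^n = \pi_{x\#}\mu^n \to \rho = \pi_{x\#}\mu$ in $W_2$, and in particular boundedness of second moments. The technical heart is to show $\nabla W*\rho^n \to \nabla W*\rho$ in a strong enough sense: since $|\nabla W(z)| \lesssim |z|^{1-\alpha}$ with $\alpha\in(0,1)$, the kernel $\nabla W$ has sublinear growth and is continuous on all of $\mathbb{R}^d$ (the singularity of $\phi$ is only felt at the level of $D^2W$, not $\nabla W$), so $\nabla W*\rho^n(x)$ converges pointwise in $x$ to $\nabla W*\rho(x)$ and is locally uniformly bounded thanks to uniform second-moment bounds. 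Then I would expand $|\boldsymbol{u}[\rho^n]|^2 = |\omega|^2 - 2\omega\cdot\nabla W*\rho^n(x) + |\nabla W*\rho^n(x)|^2$ and pass to the limit term by term, using that the $\omega$-marginal of $\mu^n$ converges (for the $|\omega|^2$ term, which is actually $n$-independent if one keeps the $\omega$-marginal $\nu$ fixed — but here we only assume $W_2$-convergence, so uniform integrability of $|\omega|^2$ under $\mu^n$ from $W_2$-convergence suffices), and combining pointwise convergence of $\nabla W*\rho^n$ with a suitable domination or a Vitali-type argument using the $(2-\alpha)$-moment interpolation already appearing in Remark \ref{welldefi}.

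For item (ii), the standard approach for lower semicontinuity of a time-integrated nonnegative functional is Fatou's lemma combined with pointwise-in-time lower semicontinuity of $\mu \mapsto D_k[\mu]$. So the plan is: (a) show that if $\mu^n \to \mu$ in $W_2$ then $D_k[\mu] \leq \liminf_n D_k[\mu^n]$; (b) apply this with $\mu^n = \mu^n_t$, $\mu = \mu_t$ for each fixed $t$, which is legitimate since uniform convergence on $[0,T]$ gives $W_2(\mu^n_t,\mu_t)\to 0$ for every $t$; (c) conclude $\int_0^T D_k[\mu_t]\,dt \leq \int_0^T \liminf_n D_k[\mu^n_t]\,dt \leq \liminf_n \int_0^T D_k[\mu^n_t]\,dt$ by Fatou. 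For step (a), I would write $D_k[\mu] = \frac{1}{2}\int_{\mathbb{R}^{4d}\setminus\Delta} \Phi(x,\omega,x',\omega')\,d(\mu\otimes\mu)$ where $\Phi = |\boldsymbol{u}[\rho](x,\omega) - \boldsymbol{u}[\rho](x',\omega')|^2\phi(|x-x'|)$, and note that $\mu^n\otimes\mu^n \to \mu\otimes\mu$ narrowly. The integrand $\Phi$ is nonnegative but not bounded and not continuous (it blows up on the collision set and is genuinely discontinuous there). The trick — exactly as in the diagonalization of Definition \ref{DCS} — is that $\boldsymbol{u}[\rho](x,\omega) - \boldsymbol{u}[\rho](x',\omega') = (\omega-\omega') - (\nabla W*\rho(x) - \nabla W*\rho(x'))$, and the second difference is $\lesssim |x-x'|^{1-\alpha}$ by Hölder continuity of $\nabla W*\rho$ (Lemma \ref{szwarc} / Corollary \ref{C-sided-Lipschitz}), which precisely cancels enough of the $\phi(|x-x'|) = |x-x'|^{-\alpha}$ singularity when $x'\to x$; but the $(\omega-\omega')$ piece does not, so $\Phi$ is unbounded near the collision set. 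However, $\boldsymbol{u}[\rho^n]$ depends on $n$ only through $\nabla W*\rho^n$, which converges locally uniformly by item (i)'s argument, so one can truncate: for $\varepsilon>0$ replace $\phi$ by $\phi_\varepsilon := \min\{\phi, 1/\varepsilon^\alpha\}$ (a bounded continuous weight), replace $\Phi$ by $\Phi_\varepsilon$ accordingly, and first establish continuity $D_k^\varepsilon[\mu^n]\to D_k^\varepsilon[\mu]$ (bounded continuous integrand plus narrow convergence of product measures, plus local uniform convergence of $\boldsymbol{u}[\rho^n]$ and control of tails via second moments), then let $\varepsilon\downarrow 0$ using monotone convergence $D_k^\varepsilon[\mu]\uparrow D_k[\mu]$ together with $D_k^\varepsilon[\mu^n]\leq D_k[\mu^n]$ to obtain $D_k[\mu] = \lim_\varepsilon D_k^\varepsilon[\mu] = \lim_\varepsilon \lim_n D_k^\varepsilon[\mu^n] \leq \liminf_n D_k[\mu^n]$.

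The main obstacle I anticipate is the tail control in the truncation/limit argument of step (a): even after replacing $\phi$ by the bounded $\phi_\varepsilon$, the integrand still grows quadratically in $|\omega|,|\omega'|,|x|,|x'|$ (through $|\boldsymbol{u}[\rho]|$ and the $|x|^{2-\alpha}$ terms), so narrow convergence of $\mu^n\otimes\mu^n$ alone is not enough — one needs uniform integrability of these quadratic quantities against $\mu^n\otimes\mu^n$, which has to be extracted from the uniform second-moment bound that $W_2$-convergence provides (namely $\sup_n \int (|x|^2+|\omega|^2)\,d\mu^n < \infty$), combined with the interpolation estimate $\phi(|x-x'|)|x-x'|^2 = |x-x'|^{2-\alpha} \lesssim 1 + |x|^2 + |x'|^2$ from Remark \ref{welldefi}. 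Threading this uniform integrability through both the $\varepsilon$-truncation and the $n\to\infty$ limit, while keeping track of the fact that $\boldsymbol{u}[\rho^n]$ itself varies with $n$, is the delicate bookkeeping; everything else is routine once item (i) and the Hölder bound on $\nabla W*\rho$ are in place.
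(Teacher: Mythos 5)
Your overall architecture matches the paper's, and the arguments are salvageable, but there are two places where the paper's proof is both simpler and sharper than what you propose, and in both cases the difference is a specific idea you're missing rather than mere bookkeeping.

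For item (i), the paper does not argue by pointwise convergence of $\nabla W*\rho^n$ plus a Vitali-type step. Instead, it establishes \emph{genuine} $L^\infty$ uniform convergence $\sup_{t}\|\boldsymbol{u}[\rho^n_t]-\boldsymbol{u}[\rho_t]\|_\infty\to0$ (Lemma~\ref{L-unif-conv-u}), obtained by splitting $\nabla W$ into a short-range piece bounded by $\varepsilon^{1-\alpha}$ and a Lipschitz long-range piece that is controlled by Kantorovich--Rubinstein duality and $W_2(\mu^n_t,\mu_t)$. With that in hand, the paper decomposes $E_k[\mu^n]-E_k[\mu]=I_n^1+I_n^2$ where $I_n^1$ has the same measure $\mu^n$ but different velocities, and $I_n^2$ has the same velocity but different measures; $I_n^1$ goes by Cauchy--Schwarz and the $L^\infty$ bound, and $I_n^2$ uses that $|\boldsymbol{u}[\rho]|^2$ is continuous with quadratic growth, which is exactly what $W_2$-convergence handles. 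This is cleaner than pointwise convergence plus domination, and the $L^\infty$ convergence of the velocity field is reused throughout the paper (it also drives Proposition~\ref{PCSexi} and Lemma~\ref{partialtu}), so it's the right lemma to isolate.

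For item (ii), the gap you flag in your last paragraph is real, and the paper avoids it altogether by a different truncation: it replaces \emph{both} factors by bounded ones,
\begin{align*}
\mathcal{D}_m^T[\boldsymbol{\mu}]=\int_0^T\int_{\mathbb{R}^{4d}}\min\{\phi(|x-x'|),m\}\min\{|\boldsymbol{u}[\rho_t](x,\omega)-\boldsymbol{u}[\rho_t](x',\omega')|^2,m\}\,d(\mu_t\otimes\mu_t)\,dt,
\end{align*}
rather than only truncating $\phi$. With the velocity-difference factor also capped at $m$, the integrand is bounded by $m^2$, so the ``quadratic growth in $|\omega|,|x|$'' obstacle you worry about disappears: $II_n^2$ (same velocity, different measures) is handled by narrow convergence of $\mu^n_t\otimes\mu^n_t$ plus dominated convergence with the trivial bound $2m^2$, and $II_n^1$ (same measures, different velocities) by the Lipschitz modulus $2\sqrt m$ of $\xi\mapsto\min\{|\xi|^2,m\}$ together with the $L^\infty$ convergence from Lemma~\ref{L-unif-conv-u}. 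The monotone convergence $\mathcal{D}^T=\sup_m\mathcal{D}_m^T$ then gives lower semicontinuity immediately, without invoking Fatou or a per-$t$ limit. Your Fatou-plus-pointwise route is not wrong, and $W_2$-convergence does indeed yield the uniform integrability of $|x|^2+|\omega|^2$ that you would need, but you then have to run a tightness-and-truncation argument for each fixed $t$ anyway; the missing observation is simply that truncating the squared velocity difference by $m$ as well renders the integrand bounded and collapses all of that extra work.
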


\noindent
Before we prove Lemma \ref{Dlsc} let us formulate yet another lemma, concerning regularity of $\boldsymbol{u}[\rho_t]$, which we will use in the sequel (including in the proof of Lemma \ref{Dlsc}).

\begin{lem}\label{L-unif-conv-u}
Let $\{\boldsymbol{\mu}^n\}_{n\in \mathbb{N}}$ and $\boldsymbol{\mu}$ belong to $C([0,T],\mathcal{P}_2(\mathbb{R}^{2d})-W_2)$ and assume that $\boldsymbol{\mu}^n\rightarrow \boldsymbol{\mu}$ in $C([0,T],\mathcal{P}_2(\mathbb{R}^{2d})-W_2)$, namely $\lim_{n\rightarrow \infty}\sup_{t\in [0,T]}W_2(\mu^n_t,\mu_t)=0$. Then, we have
$$\lim_{n\rightarrow\infty}\sup_{t\in [0,T]}| \boldsymbol{u}[\rho_t^n]-\boldsymbol{u}[\rho_t]|_\infty=0,$$
where $\rho_t^n:=\pi_{x\#}\mu_t^n$ and $\rho_t:=\pi_{x\#}\mu_t$ for $t\in [0,T]$.
\end{lem}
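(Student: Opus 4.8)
The plan is to recall the explicit form of the velocity field, $\boldsymbol{u}[\rho_t](x,\omega)=\omega-\nabla W*\rho_t(x)$, and observe that the only part depending on $n$ is the convolution term $\nabla W*\rho_t^n(x)$. Hence it suffices to prove that $\nabla W*\rho_t^n\to\nabla W*\rho_t$ uniformly in both $x\in\mathbb{R}^d$ and $t\in[0,T]$. First I would reduce the problem to the $\rho$-marginals: since $\rho_t^n=\pi_{x\#}\mu_t^n$ and the projection $\pi_x$ is $1$-Lipschitz, the convergence $\sup_{t}W_2(\mu_t^n,\mu_t)\to0$ implies $\sup_{t}W_2(\rho_t^n,\rho_t)\to0$, and in particular (Proposition \ref{W2cont}(i) applied along the sequence, plus the usual characterization of $W_2$-convergence) the second moments $\sup_{n,t}\int_{\mathbb{R}^d}|x|^2\,d\rho_t^n$ are uniformly bounded.

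Next I would estimate $|\nabla W*\rho_t^n(x)-\nabla W*\rho_t(x)|$ using an optimal coupling $\gamma_t^n\in\Gamma(\rho_t^n,\rho_t)$:
\begin{align*}
\left|\nabla W*\rho_t^n(x)-\nabla W*\rho_t(x)\right|
&=\left|\int_{\mathbb{R}^{2d}}\big(\nabla W(x-y)-\nabla W(x-y')\big)\,d\gamma_t^n(y,y')\right|\\
&\leq \int_{\mathbb{R}^{2d}}\left|\nabla W(x-y)-\nabla W(x-y')\right|\,d\gamma_t^n(y,y').
\end{align*}
The kernel $\nabla W$ is not Lipschitz near the origin — indeed $|D^2W|\approx\phi$ is singular — so the naive bound fails; this is the main obstacle. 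To get around it I would use the growth/modulus estimates of $\nabla W$ collected earlier (Lemma \ref{szwarc} and Corollary \ref{C-sided-Lipschitz}): $\nabla W$ is $(1-\alpha)$-Hölder continuous, $|\nabla W(z)-\nabla W(z')|\lesssim |z-z'|^{1-\alpha}$ globally (since $\alpha\in(0,1)$, the Hölder exponent is positive and the singularity of $D^2W$ is integrable along segments). Applying this pointwise inside the integral and then Jensen's inequality with the concave function $r\mapsto r^{1-\alpha}$ gives
$$
\left|\nabla W*\rho_t^n(x)-\nabla W*\rho_t(x)\right|
\lesssim \int_{\mathbb{R}^{2d}}|y-y'|^{1-\alpha}\,d\gamma_t^n(y,y')
\leq \left(\int_{\mathbb{R}^{2d}}|y-y'|^{2}\,d\gamma_t^n(y,y')\right)^{\frac{1-\alpha}{2}}
= W_2(\rho_t^n,\rho_t)^{1-\alpha},
$$
where the implied constant is independent of $x$ and $t$. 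Taking the supremum over $x$ and $t$ and letting $n\to\infty$ yields $\sup_{t\in[0,T]}|\boldsymbol{u}[\rho_t^n]-\boldsymbol{u}[\rho_t]|_\infty\lesssim \big(\sup_{t\in[0,T]}W_2(\rho_t^n,\rho_t)\big)^{1-\alpha}\to0$, which is the claim.

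I expect the only genuinely delicate point to be justifying the global Hölder bound $|\nabla W(z)-\nabla W(z')|\lesssim|z-z'|^{1-\alpha}$ for the specific kernel $\nabla W(x)=\tfrac{1}{1-\alpha}\phi(|x|)x$; for $|z|,|z'|$ bounded this is exactly the content of the $(1-\alpha)$-Hölder regularity recorded in Lemma \ref{szwarc}/Corollary \ref{C-sided-Lipschitz}, while for large arguments one combines the sublinear growth $|\nabla W(x)|\lesssim|x|^{1-\alpha}$ with interpolation. Since $\alpha\in(0,1)$ the exponent $1-\alpha$ is strictly positive, so no uniform integrability beyond the already-established second-moment bound is needed, and the argument closes cleanly.
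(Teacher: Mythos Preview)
Your proof is correct but follows a different route from the paper's. The paper decomposes $\nabla W$ via a smooth cutoff $\xi_\varepsilon$ at scale $\varepsilon$: the short-range piece $(1-\xi_\varepsilon)\nabla W$ is uniformly small of order $\varepsilon^{1-\alpha}$ by the sublinear growth of $\nabla W$, while the long-range piece $\xi_\varepsilon\nabla W$ is globally Lipschitz (with $\varepsilon$-dependent constant), so Kantorovich--Rubinstein duality together with $W_1\leq W_2$ handles it; one then sends $n\to\infty$ first and $\varepsilon\to 0$ afterwards. You instead use the global $(1-\alpha)$-H\"older continuity of $\nabla W$ directly---a fact the paper does invoke later, in the proof of Lemma~\ref{gepg}, with explicit constant $H\leq 5$---and obtain the quantitative bound $|\boldsymbol{u}[\rho_t^n]-\boldsymbol{u}[\rho_t]|_\infty\lesssim W_2(\rho_t^n,\rho_t)^{1-\alpha}$ via an optimal coupling and H\"older/Jensen. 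Your argument is shorter and yields an explicit rate; the paper's cutoff approach is more robust to kernels whose gradients lack a clean global modulus of continuity.

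Two minor remarks: the uniform second-moment bound you mention is never actually used in your estimate and can be dropped (and in any case would not follow from Proposition~\ref{W2cont}, which concerns solutions, but simply from uniform $W_2$-convergence on $[0,T]$); also, the global H\"older bound on $\nabla W$ is not literally recorded in Lemma~\ref{szwarc} or Corollary~\ref{C-sided-Lipschitz}, though it is elementary (split according to whether $|x-y|\gtrless\max(|x|,|y|)$) and, as noted, is stated and used elsewhere in the paper.
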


\begin{proof}
Observe that we can write
$$\boldsymbol{u}[\rho_t^n](x,\omega)-\boldsymbol{u}[\rho_t](x,\omega)=I_{n,\varepsilon}^1(t,x,\omega)+I_{n,\varepsilon}^2(t,x,\omega),$$
for any $\varepsilon\in (0,1)$, where we have
\begin{align*}
I_{n,\varepsilon}^1(t,x,\omega)&:=-\int_{\mathbb{R}^{2d}}\xi_\varepsilon(|x-x'|)\,\nabla W(x-x')\,(d\mu_t^n(x',\omega')-d\mu_t(x',\omega')),\\
I_{n,\varepsilon}^2(t,x,\omega)&:=-\int_{\mathbb{R}^{2d}}(1-\xi_\varepsilon(|x-x'|))\,\nabla W(x-x')\,(d\mu_t^n(x',\omega')-d\mu_t(x',\omega')),
\end{align*}
for a smooth cut-off function $\xi_\varepsilon$ verifying $0\leq \xi_\varepsilon\leq 1$ with $\xi_\varepsilon\equiv 0$ in $[0,\varepsilon]$ and $\xi_\varepsilon\equiv 1$ in $[2\varepsilon,+\infty)$. On the one hand, we have
$$|(1-\xi_\varepsilon(|x-x'|))\,\nabla W(x-x')|\lesssim |x-x'|^{1-\alpha}\mathds{1}_{|x-x'|\leq 2\varepsilon}\lesssim \varepsilon^{1-\alpha},$$
which for the second term yields the uniform estimate 
\begin{equation}\label{E-unif-conv-u-1}
| I_{n,\varepsilon}^2|_\infty\lesssim \varepsilon^{1-\alpha}.
\end{equation}
On the other hand, let us define the Lipschitz functions $\varphi_{x,\varepsilon}(x'):=\nabla W(x-x')\xi_\varepsilon(|x-x'|)$ in the integrand of $I_{n,\varepsilon}^1$. Since $\varphi_{x,\varepsilon}(x')=\varphi_{0,\varepsilon}(x'-x)$, then its Lipscschitz seminorm is independent of $x$ and we have $[\varphi_{x,\varepsilon}]_{\rm Lip}=[\varphi_{0,\varepsilon}]_{\rm Lip}$. Using the Kantorovitch-Rubinstein duality theorem and interpolating $W_2$ by $W_1$ Wasserstein distance we have
\begin{equation}\label{E-unif-conv-u-2}
| I_{n,\varepsilon}^1|_\infty\leq [\varphi_{0,\varepsilon}]_{\rm Lip} \sup_{t\in [0,T]} W_2(\mu_t^n,\mu_t).
\end{equation}
Hence, putting \eqref{E-unif-conv-u-1}-\eqref{E-unif-conv-u-2} together, taking limits as $n\rightarrow \infty$ and by the convergence $\boldsymbol{\mu}^n\rightarrow \boldsymbol{\mu}$ in $C([0,T],\mathcal{P}_2(\mathbb{R}^{2d})-W_2)$ we obtain
$$\limsup_{n\rightarrow \infty}\sup_{t\in [0,T]}| \boldsymbol{u}[\rho_t^n]-\boldsymbol{u}[\rho_t]|_\infty\lesssim \varepsilon^{1-\alpha}.$$
Since the choice of $\varepsilon\in (0,1)$ is arbitrary, this finishes the proof.
\end{proof}

\medskip

\begin{proof}[Proof of Lemma \ref{Dlsc}]
~

\smallskip

$\diamond$ {\sc Step 1}. Proof of $(i)$.\\
Let $\{\mu^n\}_{n\in \mathbb{N}}$ and $\mu$ belong to $\mathcal{P}_2(\mathbb{R}^{2d})$ with $\mu_n\rightarrow \mu$ in the $W_2$ metric. Note that
$$E_k[\mu^n]-E_k[\mu]=I_n^1+I_n^2,$$
for every $n\in \mathbb{N}$, where each term reads
\begin{align*}
I_n^1&:=\frac{1}{2}\int_{\mathbb{R}^{2d}}(\boldsymbol{u}[\rho^n](x,\omega)-\boldsymbol{u}[\rho](x,\omega))\cdot (\boldsymbol{u}[\rho^n](x,\omega)+\boldsymbol{u}[\rho](x,\omega)) \,d\mu^n(x,\omega),\\
I_n^2&:=\frac{1}{2}\int_{\mathbb{R}^{2d}}|\boldsymbol{u}[\rho](x,\omega)|^2(d\mu^n-d\mu).
\end{align*}
On the one hand, by the Cauchy--Schwarz inequality we have
\begin{align*}
\vert I_n^1\vert &\leq \frac{1}{\sqrt{2}}| \boldsymbol{u}[\rho^n]-\boldsymbol{u}[\rho]|_\infty\left(\int_{\mathbb{R}^{2d}}|\boldsymbol{u}[\rho_n](x,\omega)|^2+|\boldsymbol{u}[\rho](x,\omega)|^2\,d\mu^n(x,\omega)\right)^{1/2}\\
&\lesssim | \boldsymbol{u}[\rho^n]-\boldsymbol{u}[\rho]|_\infty \left(1+\int_{\mathbb{R}^{2d}}|x|^2+|\omega|^2\,d\mu^n(x,\omega)\right),
\end{align*}
where we have used that $\boldsymbol{u}[\rho^n]$ and $\boldsymbol{u}[\rho]$ are continuous with subquadratic growth
\begin{align*}
    |\boldsymbol{u}[\rho^n](x,\omega)|^2+|\boldsymbol{u}[\rho](x,\omega)|^2 \lesssim 1+|x|^2 + |\omega|^2,
\end{align*}
uniformly in $n\in \mathbb{N}$. Therefore, we obtain $\lim_{n\rightarrow\infty} I_n^1=0$ by the uniform convergence of $\boldsymbol{u}[\rho_n]$ toward $\boldsymbol{u}[\rho]$ in Lemma \ref{L-unif-conv-u}, and by the fact the second order moments of $\mu^n$ are uniformly bounded ({\it cf.} Proposition \ref{W2cont}). On the other hand, recall again that $|\boldsymbol{u}[\rho]|^2$ is a continuous function with quadratic growth. Since $\mu^n\rightarrow \mu$ in the $W_2$ metric, the usual characterization of the convergence in $W_2$ in terms of test functions shows that $\lim_{n\rightarrow\infty} I_n^2=0$. Altogether shows that $E_k[\mu^n]\rightarrow E[\mu]$.

\medskip

$\diamond$ {\sc Step 2}. Proof of $(ii)$.\\
For any $m>0$ we define the truncated functional $\mathcal{D}_m^T:C([0,T],\mathcal{P}_2(\mathbb{R}^{2d})-W_2)\longrightarrow [0,+\infty)$ as
\begin{align*}
\mathcal{D}_m^T[\boldsymbol{\mu}]:= \int_0^T\int_{\RR^{4d}}\min\Big\{\phi(|x-x'|),m\Big\}\min\Big\{|\boldsymbol{u}[\rho_t](x,\omega)-\boldsymbol{u}[\rho_t](x',\omega')|^2,m\Big\}\,d(\mu_t\otimes\mu_t)\,dt,
\end{align*}
for any $\boldsymbol{\mu}\in C([0,T],\mathcal{P}_2(\mathbb{R}^{2d})-W_2)$. Note that, by the monotone convergence theorem we have
$$\mathcal{D}^T[\boldsymbol{\mu}]=\sup_{m>0}\mathcal{D}_m^T[\boldsymbol{\mu}].$$
Therefore, the lower semicontinuity will follow as along as we prove that the truncated functionals $\mathcal{D}_m^T$ are continuous in $C([0,T],\mathcal{P}_2(\mathbb{R}^{2d})-W_2)$ for all $m>0$. To do so, take any sequence $\{\boldsymbol{\mu}^n\}_{n\in \mathbb{N}}$ convergent to $\boldsymbol{\mu}$ in $C([0,T],\mathcal{P}_2(\mathbb{R}^{2d})-W_2)$ and notice that
$$\mathcal{D}_m^T[\boldsymbol{\mu}^n]-\mathcal{D}_m^T[\boldsymbol{\mu}]=II_n^1+II_n^2,$$
where each term reads
\begin{align*}
    II_n^1&:=\int_0^T\int_{\mathbb{R}^{4d}}\min\Big\{\phi(|x-x'|),m\Big\}\Big[\min\Big\{|\boldsymbol{u}[\rho_t^n](x,\omega)-\boldsymbol{u}[\rho_t^n](x',\omega')|^2,m\Big\}\\
    &\qquad -\min\Big\{|\boldsymbol{u}[\rho_t](x,\omega)-\boldsymbol{u}[\rho_t](x',\omega')|^2,m\Big\}\Big]\,d(\mu_t^n\otimes \mu_t^n)\,dt\\
    II_n^2&:=\int_0^T\int_{\mathbb{R}^{4d}}\min\Big\{\phi(|x-x'|),m\Big\}\min\Big\{|\boldsymbol{u}[\rho_t](x,\omega)-\boldsymbol{u}[\rho_t](x',\omega')|^2,m\Big\}\\
    &\qquad \times \,(d(\mu^n_t\otimes \mu^n_t)-d(\mu_t\otimes \mu_t))\,dt.
\end{align*}
On the one hand, since the function $\xi\in \mathbb{R}^d\mapsto \min\{|\xi|^2,m\}$ is Lipschitz-continuous with constant $2\sqrt{m}$, we have for the first term
$$|II_n^1|\leq 4\,T\,m^{3/2}\sup_{t\in [0,T]}| \boldsymbol{u}[\rho_t^n]-\boldsymbol{u}[\rho_t]|_\infty,$$
which converges to zero as $n\rightarrow \infty$ by Lemma \ref{L-unif-conv-u}. On the other hand, for every $t\in [0,T]$ the integrand appearing in the second term $II_n^2$ is a bounded continuous function in $\mathbb{R}^{4d}$. Since $\mu_t^n\otimes \mu_t^n\rightarrow \mu_t\otimes \mu_t$ narrowly, we have
$$\int_{\mathbb{R}^{4d}}\min\Big\{\phi(|x-x'|),m\Big\}\min\Big\{|\boldsymbol{u}[\rho_t](x,\omega)-\boldsymbol{u}[\rho_t](x',\omega')|^2,m\Big\}\,(d(\mu^n_t\otimes \mu^n_t)-d(\mu_t\otimes \mu_t))\rightarrow 0,$$
for every $t\in [0,T]$. Moreover, as a function of $t$, the latter is upper-bounded by $2m^2$, which is integrable in $[0,T]$. Thus, by the dominated convergence theorem, we also have $II_n^2\rightarrow 0$ as $n\rightarrow\infty$, and this ends the proof.
\end{proof}

\section{Proof of the main theorems}\label{sec:mainproof}
The rest of the paper is dedicated to the proof of Theorems \ref{main1}, \ref{main2} and \ref{main3}. In Sections \ref{sec:KtoCS} and \ref{sec:CStoK} we prove Theorem \ref{main1}. Then in Section \ref{sec:Cfinalp} we briefly apply Theorem \ref{main1} together with earlier results from \cite{PP-22-1-arxiv} to prove Theorems \ref{main2} and \ref{main3}. Let us begin by making a remark regarding testing of our main equations \eqref{sysk} and \eqref{kurak} by time-independent test functions.

\begin{rem}[Testing by time-independent functions]\label{test}
For any solution $\boldsymbol{f}$ to \eqref{sysk} in the sense of Definition \ref{DCS}, we can follow the lines of the proof of Proposition \ref{W2contf}, particularly {\sc Step 3} related to claim \eqref{E-second-moments-x} with a test function $\varphi(x,v)=g(x)\in C^1_b(\RR^d)$, and we arrive at
\begin{align*}
    \frac{1}{t-s}\left(\int_{\RR^{2d}} g(x)\ df_t - \int_{\RR^{2d}}g(x)\, df_s\right) = \frac{1}{t-s}\int_s^t \int_{\RR^{2d}} \nabla_x g(x)\cdot v\, df_r\, dr,
\end{align*}
for all $s\leq t$ in $[0,T]$. Taking limits $t\to s$ we obtain
\begin{align}\label{test2}
    \frac{d}{dt} \int_{\RR^{2d}} g(x)\, df_t = \int_{\RR^{2d}} \nabla_x g(x)\cdot v\, df_t,
\end{align}
for every $t\in [0,T]$ of continuity of the right-hand side. By Proposition \ref{W2contf} these are all $t\in[0,T]$, and therefore $t\mapsto \int_{\RR^{2d}} g\,df_t$ belongs to $C^1([0,T])$. A similar argument can be performed with any solution $\boldsymbol{\mu}$ to \eqref{kurak} in the sense of Definition \ref{D1st}, based on Proposition \ref{W2cont}. Thus for any $g\in C^1(\RR^d)$, such that $\nabla_x g$ has sublinear growth, we have that $t\mapsto \int_{\RR^{2d}} g \,d\mu_t$ is $C^1([0,T])$ and
\begin{align}\label{test1}
\frac{d}{dt}\int_{\RR^{2d}}g\,d\mu_t = -\int_{\RR^{2d}}(\omega-\nabla W*\rho_t)\cdot\nabla_x g \,d\mu_t,
\end{align}
for any $t\in [0,T]$. Taking $g(x-\cdot)$ as the test function in \eqref{test2} and \eqref{test1}, and recalling the notation notation $V*[W]$ in \eqref{E-convolution-vector-vector}, leads to the identities
\begin{align*}
&\partial_t(g* f_t) = -(\nabla_x g)*[vf_t]:= -\int_{\RR^{2d}}\nabla_x g(x-x')\cdot v'\,df_t',\\
&\partial_t (g*\mu_t) = -(\nabla_x g)* [(\omega-\nabla W*\rho_t)\mu]:= -\int_{\RR^{2d}}\nabla_x g(x-x')\cdot(\omega'-\nabla W*\rho_t(x'))\,d\mu_t',
\end{align*}
which we use in the proof of the main result Theorem \ref{main1}.
\end{rem}

\subsection{From first order Kuramoto system to the second order alignment dynamics}\label{sec:KtoCS}

For the readers convenience let us restate the first part of Theorem \ref{main1} in a more compact way.
\begin{pro}\label{PCSexi}
Suppose that $\alpha\in(0,1)$ and $T>0$ and let $\boldsymbol{\mu}$ be a weak measure-valued solution to \eqref{kurak} in the sense of Definition \ref{D1st}. Assume further that
\begin{align}\label{nudiag}
    \int_{\RR^{2d}}|\omega-\omega'|^\frac{2-3\alpha}{1-\alpha}\, d(\nu\otimes\nu)<\infty.
\end{align}
Then, $f_t=(\cT^{1\to 2}[\rho_t])_{\#} \mu_t$, with change of variables $\mathcal{T}^{1\to 2}[\rho_t]$ given in Definition \ref{D-change-variables}, is a weak solution of \eqref{sysk} in the sense of Definition \ref{DCS}.
\end{pro}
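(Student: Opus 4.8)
The plan is to verify directly that $f_t = (\mathcal{T}^{1\to 2}[\rho_t])_{\#}\mu_t$ satisfies all three requirements in Definition \ref{DCS}: the weak formulation \eqref{weakeqf}, the energy--enstrophy inequality \eqref{weakeqfa}, and the uniform bound \eqref{weakeqfb}. The key device throughout is the change of variables: for a test function $\psi_t(x,v)$ admissible in \eqref{weakeqf}, one pushes it back to $\eta_t(x,\omega) := \psi_t(x,\omega - \nabla W*\rho_t(x)) = \psi_t \circ \mathcal{T}^{1\to 2}[\rho_t](x,\omega)$ and tries to use $\boldsymbol{\mu}$'s weak formulation \eqref{weakeqF}. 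The first obstacle here is that $\eta_t$ is \emph{not} a legitimate test function for \eqref{weakeqF}: it depends on $t$ not only through $\psi_t$ but through $\rho_t(x) = \pi_{x\#}\mu_t$, so $\partial_t \eta_t$ picks up an extra term $-\nabla_v\psi_t(x, \boldsymbol{u}[\rho_t]) \cdot \partial_t(\nabla W*\rho_t)(x)$. To handle this I would use Remark \ref{test}: the identity \eqref{test1} (with $g = \nabla W(x - \cdot)$, which has sublinear growth since $|\nabla W(x)|\lesssim |x|^{1-\alpha}$) gives $\partial_t(\nabla W*\rho_t)(x) = -(\nabla_x \nabla W)*[(\omega - \nabla W*\rho_t)\mu_t](x) = -\int_{\RR^{2d}} D^2 W(x-x')\,\boldsymbol{u}[\rho_t](x',\omega')\,d\mu_t'$. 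Substituting this back, the extra term becomes exactly the nonlinear alignment term, and after applying the change of variables $(x,\omega)\mapsto(x,v)$ under $\mu_t \leftrightarrow f_t$ one recovers \eqref{DCSprep}. The antisymmetrization leading to the diagonalized form \eqref{sym}--\eqref{weakeqf} is then carried out exactly as in the formal computation in the Preliminaries, using the anti-symmetry of $D^2W(x-x')(v-v')$.

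The genuinely delicate point is \textbf{justifying that all the integrals in the above manipulation are absolutely convergent}, i.e.\ that Fubini applies and that $\nabla_v\psi_t(x,v)\cdot \partial_t(\nabla W*\rho_t)(x)$ is integrable against $f_t \otimes \mathcal{L}^1$. Here is precisely where hypothesis \eqref{nudiag} enters. After the change of variables and symmetrization one must estimate $\iint |D^2 W(x-x')|\,|\boldsymbol{u}[\rho_t](x,\omega) - \boldsymbol{u}[\rho_t](x',\omega')|\,d(\mu_t\otimes\mu_t)$ and, to get the full nonlinear term before antisymmetrization, also the one-sided version $\iint |D^2W(x-x')|\,|\boldsymbol{u}[\rho_t](x,\omega)|\,d(\mu_t\otimes\mu_t)$. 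The quantity $\boldsymbol{u}[\rho_t](x,\omega) - \boldsymbol{u}[\rho_t](x',\omega') = (\omega-\omega') - (\nabla W*\rho_t(x) - \nabla W*\rho_t(x'))$ involves the increment $\omega-\omega'$, which is only controlled $\nu$-a.e.\ and need not be square-integrable a priori beyond the second moments of $\mu_t$; combined with the $|x-x'|^{-\alpha}$ singularity of $D^2W$, one needs $|D^2W(x-x')|\,|\omega-\omega'|$ to be integrable near the collision set, and bounding $|D^2W(x-x')|\,|\omega-\omega'| \lesssim |x-x'|^{-\alpha}|\omega-\omega'|$ by a combination of $|x-x'|^{2-\alpha}|\omega-\omega'|^{?}$ and $|x-x'|^{-\alpha}$-type terms via Young's inequality produces exactly the exponent $\frac{2-3\alpha}{1-\alpha}$ on $|\omega-\omega'|$ in \eqref{nudiag}. (Since the $\omega$-marginal $\nu$ of every $\mu_t$ is time-independent by the disintegration \eqref{disint}, $\int|\omega-\omega'|^{\frac{2-3\alpha}{1-\alpha}}d(\nu\otimes\nu)$ is the natural object, and under \eqref{nudiag} it is finite.) I expect this integrability bookkeeping, carried out uniformly in $t\in[0,T]$ using Proposition \ref{W2cont} for the second moments and Lemma \ref{enstrophy} for the enstrophy, to be the main obstacle.

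For the remaining two conditions: \eqref{weakeqfb} for $f_t$ is immediate since $\pi_{x\#}f_t = \pi_{x\#}\mu_t = \rho_t$ and Proposition \ref{W2cont}(i) gives the uniform bound on $\int|x|^2\,d\rho_t$. For the energy--enstrophy inequality \eqref{weakeqfa}, the point is that under the change of variables $v = \boldsymbol{u}[\rho_t](x,\omega) = \omega - \nabla W*\rho_t(x)$ one has $\int|v|^2\,df_t = \int|\boldsymbol{u}[\rho_t]|^2\,d\mu_t = 2E_k[\mu_t]$ and likewise the enstrophy term in \eqref{weakeqfa} equals $2D_k[\mu_t]$ (with the collision/diagonal sets matched as in the statement following \eqref{colisionset}); hence \eqref{weakeqfa} for $f_t$ is exactly the energy dissipation inequality \eqref{E-enstrophy} of Lemma \ref{enstrophy}, evaluated on $[0,T]$ and using $E_k[\mu_0] = \frac12\int|v|^2\,df_0$. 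Finally, $f_t \in C([0,T],\mathcal{P}(\RR^{2d})\text{-narrow})$ follows from $\boldsymbol{\mu}\in C([0,T],\mathcal{P}_2(\RR^{2d})\text{-}W_2)$ (Proposition \ref{W2cont}(ii)) together with Lemma \ref{L-unif-conv-u}, which makes $\mathcal{T}^{1\to 2}[\rho_t]$ vary continuously, so that $t\mapsto f_t$ is narrowly continuous by a standard composition argument.
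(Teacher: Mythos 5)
Your overall architecture is sound — pull back a test function $\psi$ along $\mathcal{T}^{1\to 2}[\rho_t]$, use Remark~\ref{test} to account for the time-dependence introduced through $\rho_t$, obtain \eqref{weakeqfa} from Lemma~\ref{enstrophy}, \eqref{weakeqfb} from Proposition~\ref{W2cont}(i), and narrow continuity from Proposition~\ref{W2cont}(ii) plus Lemma~\ref{L-unif-conv-u} — but there is a genuine gap that prevents the argument from being rigorous as written. The pulled-back function $\eta_t(x,\omega)=\psi_t(x,\omega-\nabla W*\rho_t(x))$ is \emph{not} an admissible test function for Definition~\ref{D1st}. That definition requires $\eta\in C^1_b([0,T]\times\RR^{2d})$, but $\nabla_x\eta_t$ contains the factor $D^2W*\rho_t(x)$, and since $D^2W$ has a $|x|^{-\alpha}$ singularity this convolution against the measure $\rho_t$ is not bounded and need not even be finite everywhere. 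For the same reason you cannot invoke Remark~\ref{test} with $g(\cdot)=\nabla W(x-\cdot)$: that remark requires $g\in C^1$ with sublinear gradient, whereas $\nabla W$ is only $(1-\alpha)$-H\"older at the origin (its derivative $D^2W$ is singular), so the identity $\partial_t(\nabla W*\rho_t)(x)=-\int D^2W(x-x')\,\boldsymbol{u}[\rho_t](x',\omega')\,d\mu_t'$ is precisely the thing that has to be \emph{earned}, not assumed. The paper resolves this by first replacing $W$ with the explicit mollification $W_\varepsilon$ from \eqref{epep}, for which $\nabla W_\varepsilon\in C^1$ and $D^2W_\varepsilon\in C_b$; Lemma~\ref{partialtu} then legitimizes the corresponding $\eta_t^\varepsilon(x,\omega)=\psi_t(x,\boldsymbol{u}_\varepsilon[\rho_t](x,\omega))$ as an admissible test function, after which one antisymmetrizes and takes $\varepsilon\to 0$. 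The mollification is not a technical flourish; without it the first line of your argument is undefined.

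A related inaccuracy is the description of how hypothesis~\eqref{nudiag} enters. You attribute the exponent $\tfrac{2-3\alpha}{1-\alpha}$ to Young's inequality applied to $|x-x'|^{-\alpha}|\omega-\omega'|$, but the actual mechanism (Lemma~\ref{gepg}) is a partition of $\RR^{4d}$ into the region $A=\{|\omega-\omega'|\leq\tfrac{3H}{1-\alpha}|x-x'|^{1-\alpha}\}$ and its complement: on $A$ one bounds the mollified integrand by $|x-x'|^{2-3\alpha}$, which for $\alpha>\tfrac23$ (negative exponent) is further controlled by $|\omega-\omega'|^{\frac{2-3\alpha}{1-\alpha}}$ using the defining inequality of $A$ in reverse; off $A$ the increment in $\omega$ dominates the mollification error, giving $g_\varepsilon\leq 2g$ with $g$ the enstrophy integrand, which is $\lambda$-integrable by Lemma~\ref{enstrophy}. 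This domination is what makes the dominated convergence theorem applicable in the passage $\varepsilon\to 0$ of the nonlinear term, and it is there — not in a Fubini/integrability check on the unmollified manipulation — that \eqref{nudiag} is used. Your identification of the target — $\iint|D^2W(x-x')||\boldsymbol{u}[\rho_t]-\boldsymbol{u}[\rho_t]'|\,d(\mu_t\otimes\mu_t)$ — is on the right track, but the one-sided quantity $\iint|D^2W(x-x')||\boldsymbol{u}[\rho_t]|\,d(\mu_t\otimes\mu_t)$ that you also propose to estimate never appears in the paper's argument because the symmetrization is performed \emph{before} the limit, at the mollified (hence absolutely convergent) level; going through the un-diagonalized \eqref{DCSprep} as you suggest is not a rigorous intermediate step since those integrals are not known to converge.
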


\begin{proof}  First, observe that by  Proposition \ref{W2cont} and Lemma \ref{enstrophy}, and by the change of variables formula between $\boldsymbol{\mu}$ and $\boldsymbol{f}$, we obtain that the moment inequality \eqref{weakeqfb} and the energy dissipation inequality \eqref{weakeqfa} are readily satisfied. Next, we show that $\boldsymbol{f}\in C([0,T];\mathcal{P}(\mathbb{R}^{2d})-\mbox{narrow})$ using the characterization of the narrow convergence by testing against bounded-Lipschitz test function by virtue of Portmanteau's theorem. Indeed, given $\{t_n\}_{n\in \mathbb{N}}$ and $t_0$ in $[0,T]$ with $t_n\rightarrow t_0$, and a bounded-Lipschitz test function $g$, we have
\begin{align*}
    &\int_{\RR^{2d}} g\, df_{t_n} - \int_{\RR^{2d}} g\, df_{t_0} = \int_{\RR^{2d}} g \circ \cT^{1\to 2}[\rho_{t_n}]\, d\mu_{t_n} - \int_{\RR^{2d}} g\circ \cT^{1\to 2}[\rho_{t_0}]\, d\mu_{t_0}\\
    &\quad= \left(\int_{\RR^{2d}} g(x, \boldsymbol{u}[\rho_{t_n}](x,\omega))\, d\mu_{t_n} - \int_{\RR^{2d}} g(x, \boldsymbol{u}[\rho_{t_0}](x,\omega))\, d\mu_{t_n}\right)\\
    &\qquad\qquad+ \left(\int_{\RR^{2d}}g(x, \boldsymbol{u}[\rho_{t_0}](x,\omega))\, d\mu_t - \int_{\RR^{2d}} g(x, \boldsymbol{u}[\rho_{t_0}](x,\omega))\, d\mu_{t_0}\right)=: I_n + II_n,
\end{align*}
by the pushforward change of variables formula. On the one hand, Proposition \ref{W2cont} ensures that $\boldsymbol{\mu}\in C([0,T],\mathcal{P}_2(\mathbb{R}^{2d})-W_2)$, which implies that $\mu_{t_n}\to\mu_{t_0}$ in $W_2$ and therefore
$$|I_n|\leq [g]_{\rm Lip}| \boldsymbol{u}[\rho_{t_n}]-\boldsymbol{u}[\rho_{t_0}]|_\infty\rightarrow 0,$$
by Lemma \ref{L-unif-conv-u}. On the other hand, for $II_n$ we again use the convergence $\mu_{t_n}\rightarrow \mu_{t_0}$ in $W_2$, and more specifically, its characterization by convergence of integrals tested with continuous functions with subquadratic growth, see \cite[Theorem 6.9]{V-09}. Indeed, note that the continuous test function $g\circ \mathcal{T}^{1\to 2}[\rho_{t_0}]$ in the integrand has subquadratic growth since we have
\begin{align*}
|g\circ \mathcal{T}^{1\to 2}(x,\omega)|&\leq |g(0,0)|+|g\circ \mathcal{T}^{1\to 2}(x,\omega)-g(0,0)|\\
&\leq |g(0,0)|+[g]_{\rm Lip}(|x|+|\boldsymbol{u}[\rho_{t_0}](x,\omega)|)\\
&\lesssim 1+|x|^2+|\omega|^2,
\end{align*}
by the Lipschitz property of $g$ and the growth of the velocity field in Corollary \ref{C-sided-Lipschitz}(iii). Hence, we obtain that $II_n\to 0$. The remainder of the proof, {\it i.e.}, that $\boldsymbol{f}$ satisfies the weak formulation \eqref{weakeqf} in Definition \ref{DCS}, follows by steps.

\medskip

$\diamond$ {\sc Step 1}. Preparing the test function.\\
The ultimate goal of this proposition is to show that if $\boldsymbol{\mu}$ satisfies \eqref{weakeqF} then $\boldsymbol{f}$ satisfies \eqref{weakeqf}. To this end, let $\psi$ be any test function from Definition \ref{DCS}. By Proposition \ref{W2cont} we could consider an even more general $\psi\in C^1_b([0,T]\times\RR^{2d})$, compactly supported in $[0,T)$, with uniformly Lipschitz-continuous $\nabla_v\psi$. Our goal is to pull-back $\psi$ along the mapping $\cT^{2\to 1}$ and use it as a test function in \eqref{kurak}. However, due to the insufficient regularity of $\cT^{2\to 1}$, we need to mollify first. Fix $\varepsilon> 0$ and let us define the test function
\begin{align*}
\eta_t(x,\omega) &:=  \psi_t(x, \boldsymbol{u}_\varepsilon[\rho_t](x,\omega)),\\ 
\boldsymbol{u}_\varepsilon[\rho_t](x,\omega) &:= \omega- \nabla W_\varepsilon*\rho_t(x),
\end{align*}
where we have set
\begin{align}\label{epep}
\begin{aligned}
W_\varepsilon(x) &:=\frac{1}{2-\alpha}(|x|+\varepsilon)^{1-\alpha}\left(|x|-\frac{\varepsilon}{1-\alpha}\right),\\
\nabla W_\varepsilon (x) &= \frac{1}{1-\alpha} x (|x|+\varepsilon)^{-\alpha},\\
D^2 W_\varepsilon(x) &= \frac{1}{1-\alpha}(|x|+\varepsilon)^{-\alpha} \left(I - \alpha \frac{x}{|x|}\otimes\frac{x}{|x|+\varepsilon}\right),
\end{aligned}
\end{align}
which can be viewed as mollified versions of $W$, $\nabla W$ and $D^2 W$ respectively.

Our goal is to show that $\eta$ is an admissible test function in Definition \ref{D1st}. First, let us make a couple of observations regarding regularity of $\boldsymbol{u}[\rho_t]$ and $\boldsymbol{u}_\varepsilon[\rho_t]$ in the following lemma, which in particular improves the simpler space continuity in Corollary \ref{C-sided-Lipschitz} to space-time continuity.

\begin{lem}[Regularity of mollifications]\label{partialtu}
Let $\boldsymbol{\mu}$ be any solution to \eqref{kurak} in the sense of Definition \ref{D1st}. Then, the velocity fields $\boldsymbol{u}[\rho]$ and $\boldsymbol{u}_\varepsilon[\rho]$ belong to the class $C([0,T]\times \RR^{2d},\mathbb{R}^d)$ and they have sublinear growth for any $\varepsilon>0$. Moreover, we have 
\begin{align*}
&[D^2W_\varepsilon*(\boldsymbol{u}[\rho]\mu)]\in C_b([0,T]\times\RR^{2d},\RR^d),\\ &D^2W_\varepsilon*\mu\in C_b([0,T]\times\RR^{2d},\RR^{d\times d}),
\end{align*}
and the following identities hold true
\begin{align*}
\partial_t\boldsymbol{u}_\varepsilon[\rho_t] &= [D^2W_\varepsilon*(\boldsymbol{u}[\rho_t]\mu_t)],\\ \nabla_x\boldsymbol{u}_\varepsilon[\rho_t] &= D^2W_\varepsilon*\mu_t,
\end{align*}
for every $(t,x,\omega)\in [0,T]\times \mathbb{R}^{2d}$. Here, we recall that
$$[D^2 W_\varepsilon*(\boldsymbol{u}[\rho_t]\mu_t)](x)=\int_{\mathbb{R}^{2d}} D^2 W_\varepsilon(x-x')\boldsymbol{u}[\rho_t](x',\omega')\,d\mu_t(x',\omega'),$$
in accordance with the notation $[M*V]$ in \eqref{E-convolution-matrix-vector}.
\end{lem}

\begin{proof}
First, we prove that $\boldsymbol{u}[\rho]\in C([0,T]\times \RR^{2d},\mathbb{R}^d)$. Consider any sequence $\{(t_n,x_n,\omega_n)\}_{n\in \mathbb{N}}$ and $(t,x,\omega)$ in $[0,T]\times \mathbb{R}^{2d}$ such that $(t_n,x_n,\omega_n)\to(t,x,v)$. Then, we have
\begin{multline*}
    \boldsymbol{u}[\rho_{t_n}](x_n,\omega_n) - \boldsymbol{u}[\rho_{t}](x,\omega) \\
    = \big(\boldsymbol{u}[\rho_{t_n}](x_n,\omega_n) - \boldsymbol{u}[\rho_{t}](x_n,\omega_n)\big) + \big(\boldsymbol{u}[\rho_{t}](x_n,\omega_n) - \boldsymbol{u}[\rho_{t}](x,\omega)\big)=: I_n + II_n.
\end{multline*}
Note that Proposition \ref{W2cont} implies that $\mu_{t_n}\to\mu_t$ in $W_2$, which together with Lemma \ref{L-unif-conv-u} ensures that $I_n\to 0$. In addition, the convergence $II_n\to 0$ follows from Corollary \ref{C-sided-Lipschitz}(i). Thus, we have $\boldsymbol{u}[\rho]\in C([0,T]\times \RR^{2d},\mathbb{R}^d)$ and by analogous arguments $\boldsymbol{u}_\varepsilon[\rho]\in C([0,T]\times \RR^{2d},\mathbb{R}^d)$. To finish the proof first note that the mollified weight verifies that $\nabla W_\varepsilon\in C^1(\mathbb{R}^d,\mathbb{R}^d)$, it has subquadratic growth, and $D^2W_\varepsilon\in C_b(\RR^{d},\mathbb{R}^{d\times d})$. Thus, recalling Remark \ref{test}, we have
\begin{align*}
    \partial_t\boldsymbol{u}_\varepsilon[\rho_t] = -\partial_t(\nabla W_\varepsilon*\rho_t) = [D^2W_\varepsilon*(\boldsymbol{u}[\rho_t]\mu_t)].
\end{align*}
Finally, by Young's inequality for convolutions, since $D^2W_\varepsilon\in C_b(\RR^{d},\RR^{d\times d})$, we then have $[D^2W_\varepsilon*(\boldsymbol{u}[\rho]\mu)]\in C_b([0,T]\times\RR^{2d},\RR^d)$. Since the proof of the remaining properties involving $\nabla_x\boldsymbol{u}_\varepsilon[\rho_t]$ is analogous, we omit it here.
\end{proof}

\smallskip

By the above lemma and the chain rule
\begin{align}\label{dteta}
\begin{aligned}
\partial_t\eta_t(x,\omega) &= \partial_t\psi_t(x,\boldsymbol{u}_\varepsilon[\rho_t](x,\omega)) + \nabla_v\psi_t(x,\boldsymbol{u}_\varepsilon[\rho_t](x,\omega))\cdot [D^2W_\varepsilon*(\boldsymbol{u}[\rho_t]\mu_t)](x),\\
\nabla_x\eta_t(x,\omega) &= \nabla_x\psi_t(x,\boldsymbol{u}_\varepsilon[\rho_t](x,\omega)) - \nabla_v\psi_t(x,\boldsymbol{u}_\varepsilon[\rho_t](x,\omega))^\top(D^2W_\varepsilon * \mu_t(x)),
\end{aligned}
\end{align}
and both above derivatives belong to $C_b([0,T]\times\RR^{2d})$, which ensures that $\eta\in C^1_b([0,T]\times\RR^{2d})$ is an admissible test function in Definition \ref{D1st} (it inherits compact support in $[0,T)$ from $\psi$).

\medskip

$\diamond$ {\sc Step 2}. Transforming the equations.\\
In {\sc Step 1} we established that the $\varepsilon$-dependent function $\eta$ is an admissible test function for the weak formulation \eqref{weakeqF}. Therefore, we have
\begin{align}\label{step31}
\int_0^T\int_{\RR^{2d}} \Big(\partial_t\eta + \boldsymbol{u}[\rho_t]\cdot\nabla_x\eta\Big)\, d \mu_t\, dt = - \int_{\RR^{2d}} \eta_0 \, d \mu_0,
\end{align}
or equivalently, by \eqref{dteta},
\begin{multline}\label{diagl2}
{\mathcal L}_1 + {\mathcal L}_2 := \int_0^T\int_{\RR^{2d}}\Big (\partial_t\psi_t(x,\boldsymbol{u}_\varepsilon[\rho_t](x,\omega)) + \boldsymbol{u}[\rho_t](x,\omega)\cdot\nabla_x\psi_t(x,\boldsymbol{u}_\varepsilon[\rho_t](x,\omega) )\Big)\, d\mu_t\, dt \\
+ \int_0^T\int_{\RR^{2d}}\nabla_v\psi_t(x, \boldsymbol{u}_\varepsilon[\rho_t](x,\omega))\cdot\Big(\left[D^2W_\varepsilon* (\boldsymbol{u}[\rho_t]\mu_t)\right](x) -\left(D^2W_\varepsilon *\rho_t(x)\right)\boldsymbol{u}[\rho_t](x) \Big)\, d\mu_t\, dt \\
= - \int_{\RR^{2d}} \psi_0(x, \boldsymbol{u}[\rho_0](x,\omega))\, d \mu_0 =: {\mathcal R}.
\end{multline}
We diagonalize ${\mathcal L}_2$, so that it better corresponds to the nonlinear term in Definition \ref{DCS}. For the sake of notational simplicity we define the measure
\begin{align}\label{notat}
\lambda : = \mu_t\otimes\mu_t\otimes {\mathcal L}^1_{\mathlarger{\llcorner}[0,T]}(t),
\end{align}
where ${\mathcal L}^1_{\mathlarger{\llcorner}[0,T]}$ is the 1D Lebesgue measure restricted to $[0,T]$.
Since $D^2W_\varepsilon(x) = D^2W_\varepsilon(-x)$, then we can perform the change of variables $(x,\omega)\leftrightarrow (x',\omega')$ to obtain the symmetric form
\begin{align}\label{E-L2-symmetrized}
{\mathcal L}_2 &= \int_0^T\int_{\RR^{4d}} \nabla_v\psi(x,\boldsymbol{u}_\varepsilon[\rho_t])^\top D^2W_\varepsilon(x-x')\left(\boldsymbol{u}[\rho_t]'-\boldsymbol{u}[\rho_t]\right)\, d\lambda\nonumber\\
&=\frac{1}{2}\int_0^T\int_{\RR^{4d}} \left(\nabla_v\psi(x,\boldsymbol{u}_\varepsilon[\rho_t]) - \nabla_v\psi(x,\boldsymbol{u}_\varepsilon[\rho_t])'\right)^\top  D^2W_\varepsilon(x-x')\left(\boldsymbol{u}[\rho_t]'-\boldsymbol{u}[\rho_t]\right)\, d\lambda.
\end{align}

\medskip

$\diamond$ {\sc Step 3}. Passing with $\varepsilon\to 0$; two useful lemmas.\\
We continue by proving the following lemmas.

\begin{lem}\label{phiuniform}
Consider any $\boldsymbol{\mu}\in C([0,T],\mathcal{P}_2(\mathbb{R}^{2d})-W_2)$. Then, we have 
$$\lim_{\varepsilon\rightarrow 0}\sup_{t\in [0,T]}| \boldsymbol{u}_\varepsilon[\rho_t]-\boldsymbol{u}[\rho_t]|_\infty=0.$$
\end{lem}

\begin{proof}
It suffices to see that
\begin{align*}
|\nabla W_\varepsilon *\rho_t(x) - \nabla W *\rho_t(x) |&= \left|\int_{\RR^{2d}} (x-x')\Big[(|x-x'|+\varepsilon)^{-\alpha} - |x-x'|^{-\alpha}\Big] \,d\mu_t(x',\omega')\right|\\
&\leq \int_{\RR^{2d}} |x-x'|\Big[|x-x'|^{-\alpha}- (|x-x'|+\varepsilon)^{-\alpha} \Big] \,d\mu_t(x',\omega')\\
& = \int_{\RR^{2d}}\Big[|x-x'|^{1-\alpha} - (|x-x'|+\varepsilon)^{1-\alpha}\Big] \,d\mu_t(x',\omega')\\
&+ \varepsilon\int_{\RR^{2d}}(|x-x'|+\varepsilon)^{-\alpha}\,d\mu_t(x',\omega')\leq 2\varepsilon^{1-\alpha}.
\end{align*}
\end{proof}

\begin{lem}\label{gepg}
Let $\boldsymbol{\mu}$ be any solution to \eqref{kurak} in the sense of Definition \ref{D1st}, and consider
\begin{align*}
g_\varepsilon(t,x,x',\omega,\omega')&:=\big|\boldsymbol{u}_\varepsilon[\rho_t]- \boldsymbol{u}_\varepsilon[\rho_t]'\big|\ \big|\boldsymbol{u}[\rho_t]-\boldsymbol{u}[\rho_t]'\big|\ \phi_\varepsilon(|x-x'|),\\
g(t,x,x',\omega,\omega') &:=\big|\boldsymbol{u}[\rho_t]-\boldsymbol{u}[\rho_t]'\big|^2\ \phi(|x-x'|),
\end{align*}
for any $\varepsilon>0$, assumed to be equal to $0$ on the diagonal set $\Delta$ defined in \eqref{diagonalset}, where we set the truncation $\phi_\varepsilon(r):= (r+\varepsilon)^{-\alpha}$ for $r\geq 0$. Then, the following inequality is satisfied 
\begin{align}\label{gepg01}
    g_\varepsilon\lesssim
    \left\{
    \begin{array}{ll}
        \vert x-x'\vert^{2-3\alpha}+g,  & \mbox{ if } \alpha\in(0,\frac{2}{3}],  \\
       \vert \omega-\omega'\vert^\frac{2-3\alpha}{1-\alpha}+g,  & \mbox{ if } \alpha\in(\frac{2}{3},1).
    \end{array}
    \right.
\end{align}
In both cases, the right-hand side above is integrable with respect to $\lambda  = \mu_t\otimes\mu_t\otimes {\mathcal L}^1_{\mathlarger{\llcorner}[0,T]}(t)$.
\end{lem}

\begin{proof}
A straightforward computation shows that, the function $x\mapsto x(|x|+\varepsilon)^{-\alpha} = (1-\alpha)W_\varepsilon(x)$ is H\" older-continuous for every $\varepsilon>0$ with an explicit $\varepsilon$-independent constant $H\leq 5$ and exponent $1-\alpha$. With this information in mind, we define
\begin{align*}
A := \Big\{(x,x',\omega,\omega')\in \mathbb{R}^{4d}:\,|\omega-\omega'|\leq \frac{3H}{1-\alpha}|x-x'|^{1-\alpha}\Big\}.
\end{align*}
On the one hand, we note that on $A$ we have
\begin{align*}
\big|\boldsymbol{u}[\rho_t]-\boldsymbol{u}[\rho_t]'\big|\leq |\omega-\omega'| + \frac{H}{1-\alpha}|x-x'|^{1-\alpha}\leq \frac{4H}{1-\alpha}|x-x'|^{1-\alpha},\\
\big|\boldsymbol{u}_\varepsilon[\rho_t]-\boldsymbol{u}_\varepsilon[\rho_t]'\big|\leq |\omega-\omega'| + \frac{H}{1-\alpha}|x-x'|^{1-\alpha}\leq \frac{4H}{1-\alpha} |x-x'|^{1-\alpha},
\end{align*}
and therefore
\begin{equation}\label{E-gepsilon-inside}
g_\varepsilon(t,x,x',\omega,\omega')\lesssim |x-x'|^{2-3\alpha},
\end{equation}
for every $t\in [0,T]$ and $(x,x',\omega,\omega')\in A$. On the other hand, outside of $A$ we have
\begin{align*}
\Big|\big|\boldsymbol{u}_\varepsilon[\rho_t]-&\boldsymbol{u}_\varepsilon[\rho_t]'\big|-\big|\boldsymbol{u}[\rho_t]-\boldsymbol{u}[\rho_t]'\big|\Big| \leq \Big|\big(\boldsymbol{u}_\varepsilon[\rho_t]-\boldsymbol{u}[\rho_t]\big)-\big(\boldsymbol{u}_\varepsilon[\rho_t]'-\boldsymbol{u}[\rho_t]'\big)\Big|\\
&= 
\Big|(\nabla W_\varepsilon*\mu_t(x)-\nabla W*\mu_t(x))-(\nabla W_\varepsilon*\mu_t(x')-\nabla W*\mu_t(x'))\Big|\\
&=\Big|(\nabla W_\varepsilon*\mu_t(x)-\nabla W_\varepsilon*\mu_t(x'))-(\nabla W*\mu_t(x)-\nabla W*\mu_t(x'))\Big|\\
&\leq
\frac{2H}{1-\alpha} |x-x'|^{1-\alpha}
\leq |\omega-\omega'|- \frac{H}{1-\alpha}|x-x'|^{1-\alpha}\\
&\leq \big|\boldsymbol{u}[\rho_t]-\boldsymbol{u}[\rho_t]'\big|,
\end{align*}
which implies
\begin{align*}
\big|\boldsymbol{u}_\varepsilon[\rho_t]-\boldsymbol{u}_\varepsilon[\rho_t]'\big|\leq 2\big|\boldsymbol{u}[\rho_t]-\boldsymbol{u}[\rho_t]'\big|,
\end{align*}
and thus also
\begin{equation}\label{E-gepsilon-outside}
g_\varepsilon(t,x,x',\omega,\omega')\leq 2g(t,x,x',\omega,\omega'),
\end{equation}
for every $t\in [0,T]$ and $(x,x',\omega,\omega')\in \mathbb{R}^{4d}\setminus A$.

Putting \eqref{E-gepsilon-inside} and \eqref{E-gepsilon-outside} together in the case $\alpha\in (0,\frac{2}{3}]$ we arrive to the first way $\eqref{gepg01}_1$ of dominating $g_\varepsilon$. Note that in that case the right hand side belongs to $L^1_\lambda([0,T]\times \mathbb{R}^{4d})$ because so does $g$ by Lemma \ref{enstrophy}, and $|x-x'|^{2-3\alpha}$ by interpolation by the (integrable) zeroth and second order moments. When, $\alpha\in (\frac{2}{3},1)$ quantities appearing in \eqref{E-gepsilon-inside} can be further controlled by
$$g_\varepsilon(t,x,x',\omega')\lesssim |x-x'|^{2-3\alpha}\lesssim |\omega-\omega'|^\frac{2-3\alpha}{1-\alpha},$$
for every $t\in [0,T]$ and $(x,x',\omega,\omega')\in A$, thus yielding the second way $\eqref{gepg01}_2$ of dominating  $g_\varepsilon$. The right hand side now belongs to $L^1_\lambda([0,T]\times \mathbb{R}^{4d})$ by virtue of the assumption \eqref{nudiag0}. 
\end{proof}

\medskip

$\diamond$ {\sc Step 4}. Passing with $\varepsilon\to 0$; the case of ${\mathcal L}_1$ and ${\mathcal R}$.\\
We proceed with the limit as $\varepsilon\to 0$.
By Lemma \ref{phiuniform} and the regularity of $\psi$ we have 
$$\partial_t\psi_t(x,\boldsymbol{u}_\varepsilon[\rho_t])\to\partial_t\psi_t(x,\boldsymbol{u}[\rho_t])\quad \mbox{and}\quad \nabla_x\psi_t(x,\boldsymbol{u}_\varepsilon[\rho_t])\to\nabla_x\psi_t(x,\boldsymbol{u}[\rho_t]),$$ for all $(x,\omega)$ and all $t\in[0,T]$. Since these convergences hold for all $(x,\omega)$, they hold for a.e. $(x,\omega)$ with respect to every measure $\mu_t$ for a.e. $t\in[0,T]$. Thus, the dominated convergence theorem, with the dominating function of the form $C(1+|\boldsymbol{u}[\rho_t]|^2)\in L^1(0,T; L^1_{\mu_t}(\mathbb{R}^{2d}))$ ({\it cf.} Lemma \ref{enstrophy}), allow us to pass to the limit on $\mathcal{L}_1$ and obtain
\begin{align}\label{l1}
{\mathcal L}_1\stackrel{\varepsilon\to 0 }{\longrightarrow} \int_0^T\int_{\RR^{2d}} \big(\partial_t\psi_t(x,\boldsymbol{u}[\rho_t]) + \boldsymbol{u}[\rho_t]\cdot\nabla_x\psi_t(x,\boldsymbol{u}[\rho_t])\Big)\, d\mu_t\, dt,
\end{align}
and, analogously for $\mathcal{R}$, we have
\begin{align}\label{r}
{\mathcal R}\stackrel{\varepsilon\to 0 }{\longrightarrow} - \int_{\RR^{2d}} \psi_0(x,\boldsymbol{u}[\rho_0]) \,d \mu_0.
\end{align}

\medskip

$\diamond$ {\sc Step 5}. Passing with $\varepsilon\to 0$; the case of ${\mathcal L}_2$.\\
It remains to prove the convergence of ${\mathcal L}_2$. We define the functions in the integrand of the symmetrized expression \eqref{E-L2-symmetrized} for $\mathcal{L}_2$, and in the expected limiting expression as $\varepsilon\rightarrow 0$, that is, the nonlinear term in \eqref{weakeqf}. Specifically, we define $h_\varepsilon$ and $h$ as follows
\begin{align*}
h_\varepsilon&:=\left(\nabla_v\psi_t(x,\boldsymbol{u}_\varepsilon[\rho_t]) - \nabla_v\psi_t(x,\boldsymbol{u}_\varepsilon[\rho_t])'\right)^\top D^2W_\varepsilon(x-x')\left(\boldsymbol{u}[\rho_t]'-\boldsymbol{u}[\rho_t]\right),\\
h&:=\left(\nabla_v\psi_t(x,\boldsymbol{u}[\rho_t]) - \nabla_v\psi_t(x,\boldsymbol{u}[\rho_t])'\right)^\top D^2 W(x-x')\left(\boldsymbol{u}[\rho_t]'-\boldsymbol{u}[\rho_t]\right)\,\mathds{1}_{\mathbb{R}^{4d}\setminus \Delta}.
\end{align*}
Note that in $h$ we are subtracting the diagonal set $\Delta$ defined in \eqref{diagonalset} in agreement with \eqref{weakeqf}. First, note that
$h_\varepsilon\rightarrow h$
whenever $x\neq x'$. When $x=x'$ there are two possibilities: either $\omega=\omega'$ or $\omega\neq \omega'$. In the former case, the point $((x,\omega),(x',\omega'))$ belongs to $\Delta$, where $h_\varepsilon$ simply reduces to $0$, and therefore it also converges to $0$ (thus to $h$) as $\varepsilon\rightarrow 0$. In the latter case, $((x,\omega),(x',\omega'))$ must belong to the collision set ${\rm Col}$, defined in \eqref{colisionset}, which is a negligible set with respect to the measure $\lambda$ by Lemma \ref{enstrophy}. Altogether, we obtain the following convergence as $\varepsilon\rightarrow 0$
$$h_\varepsilon\to h\quad \lambda\mbox{-a.e.},$$ 

We aim to apply here the dominated convergence theorem with respect to the measure $\lambda$, and then the rest of this step will focus on finding an appropriate domination by a $L^1$ function with respect to $\lambda$. Recall that $\nabla_v\psi_t$ is Lipschitz-continuous, which implies
\begin{align*}
|\nabla_v\psi_t(x,\boldsymbol{u}_\varepsilon[\rho_t]) - \nabla_v\psi_t(x,\boldsymbol{u}_\varepsilon[\rho_t])'|&\lesssim |x-x'| + |\boldsymbol{u}_\varepsilon[\rho_t]-\boldsymbol{u}_\varepsilon[\rho_t]'|,\\
|\nabla_v\psi_t(x,\boldsymbol{u}[\rho_t]) - \nabla_v\psi_t(x,\boldsymbol{u}[\rho_t])'|&\lesssim |x-x'| + |\boldsymbol{u}[\rho_t]-\boldsymbol{u}[\rho_t]'|,
\end{align*}
and by Young's inequality with exponent 2
\begin{align*}
    |h_\varepsilon|&\lesssim (|x-x'|+|\boldsymbol{u}_\varepsilon[\rho_t]-\boldsymbol{u}_\varepsilon[\rho_t]'|)|x-x'|^{-\alpha} |\boldsymbol{u}[\rho_t]-\boldsymbol{u}[\rho_t]'|\\
    &= \Big(|x-x'|^{1-\frac{\alpha}{2}}\Big)\Big(|\boldsymbol{u}[\rho_t]-\boldsymbol{u}[\rho_t]'||x-x'|^{-\frac{\alpha}{2}}\Big) + |\boldsymbol{u}_\varepsilon[\rho_t]-\boldsymbol{u}_\varepsilon[\rho_t]'||\boldsymbol{u}[\rho_t]-\boldsymbol{u}[\rho_t]'||x-x'|^{-\alpha}\\
    &\lesssim |x-x'|^{2-\alpha} + g + g_\varepsilon.
\end{align*}
By  Proposition \ref{W2cont}, and Lemmas \ref{enstrophy} and \ref{gepg}, the right-hand side above is dominated by an $\varepsilon$-independent $\lambda$-integrable function. Therefore, by dominated convergence theorem we have
\begin{align}\label{l2}
{\mathcal L}_2\stackrel{\varepsilon\to 0 }{\longrightarrow} \frac{1}{2}\int_0^T\int_{\RR^{4d}\setminus\Delta} \left(\nabla_v\psi_t(x,\boldsymbol{u}[\rho_t]) - \nabla_v\psi_t(x,\boldsymbol{u}[\rho_t])'\right)^\top D^2W (x-x')\left(\boldsymbol{u}[\rho_t]'-\boldsymbol{u}[\rho_t]\right) \,d\lambda.
\end{align}
Combining \eqref{l1}, \eqref{r} and \eqref{l2} with \eqref{diagl2} and returning to the original notation we arrive at the equation
\begin{align}\label{eqmu}
\int_0^T\int_{\RR^{2d}}\Big (\partial_t\psi_t(x,\omega-\nabla W*\rho_t(x)) + \boldsymbol{u}[\rho_t](x,\omega)\nabla_x\psi_t(x,\omega - \nabla W*\rho_t(x) )\Big)\,d\mu_t\,dt \nonumber\\
+ \frac{1}{2}\int_0^T\int_{\RR^{4d}\setminus\Delta} \left(\nabla_v\psi_t(x,\omega-\nabla W*\rho_t(x)) - \nabla_v\psi_t(x',\omega'-\nabla W*\rho_t(x'))\right)^\top\\
\times D^2 W(x-x')\left(\boldsymbol{u}[\rho_t](x',\omega')-\boldsymbol{u}[\rho_t](x,\omega)\right)\,d(\mu_t \otimes \mu_t)\,dt\nonumber\\
= - \int_{\RR^{2d}} \psi_0(x, \omega - \nabla W*\rho_0(x))\,d\mu_0\nonumber.
\end{align}

\medskip

$\diamond$ {\sc Step 6}. Conversion from $\boldsymbol{\mu}$ to $\boldsymbol{f}$.\\
It remains to observe that, by the very definition of $\boldsymbol{f}$ as a push-forward of $\boldsymbol{\mu}$ along the map $\cT^{1\to 2}$, the above equation \eqref{eqmu} is equivalent to
\begin{align*}
&\int_0^T\int_{\RR^{2d}}\Big(\partial_t\psi_t(x,v) + v\cdot\nabla_x\psi_t(x,v)\Big)\,df_t\,dt
\\
&\qquad + \frac{1}{2}\int_0^T\int_{\RR^{4d}\setminus\Delta} \Big(\nabla_v\psi_t(x,v) - \nabla_v\psi_t(x',v')\Big)^\top D^2 W(x-x')(v'-v) \,d(f_t\otimes f_t)\,dt\nonumber\\
&\qquad = - \int_{\RR^{2d}} \psi_0(x, v) \,df_0,
\end{align*}
which is precisely the weak formulation \eqref{weakeqf} for $\boldsymbol{f}$ according to Definition \ref{DCS} tested by an arbitrary admissible test function $\psi\in C^1_b([0,T]\times\RR^{2d})$ with compact support in $[0,T)$ and Lipschitz-continuous $\nabla_v\psi$.
\end{proof}

\subsection{From second order alignment dynamics to the first order Kuramoto system}\label{sec:CStoK}

The following result focuses on proving the converse implication to the one obtained in Proposition \ref{PCSexi}, then making up the full Theorem \ref{main1}. The proof follows by repeating analogous arguments from Proposition \ref{PCSexi}, and thus we provide only a sketch.

\begin{pro}\label{2ndto1st}
Suppose that $\alpha\in(0,1)$ and $T\geq 0$ and let $\boldsymbol{f}$ be a weak solution to \eqref{sysk} in the sense of Definition \ref{DCS}. Assume further that
\begin{align}\label{nudiagf}
    \sup_{t\in [0,T]}\int_{\RR^{2d}}\left\vert(v+\nabla W*\rho_t(x))-(v'+\nabla W*\rho_t(x'))\right\vert^\frac{2-3\alpha}{1-\alpha}d(f_t\otimes f_t)< \infty.
\end{align}
Then, $\mu_t=(\cT^{2\to 1}[\rho_t])_{\#} f_t$, with change of variables $\mathcal{T}^{2\to 1}[\rho_t]$ given in Definition \ref{D-change-variables}, is a weak solution of \eqref{kurak} in the sense of Definition \ref{D1st}.
\end{pro}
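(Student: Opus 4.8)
The plan is to mirror the proof of Proposition \ref{PCSexi}, reversing the roles of $\boldsymbol{f}$ and $\boldsymbol{\mu}$. First I would establish the easy structural facts about $\mu_t=(\cT^{2\to 1}[\rho_t])_{\#}f_t$. Since $\cT^{2\to 1}[\rho_t]$ acts as the identity on the $x$-component, the spatial marginal $\rho_t=\pi_{x\#}\mu_t$ is unchanged, and the $\omega$-marginal $\nu:=\pi_{\omega\#}\mu_t$ can be shown to be time-independent: testing the weak formulation \eqref{weakeqf} for $\boldsymbol{f}$ with functions $\psi_t(x,v)=\eta(t)\,g(v+\nabla W*\rho_t(x))$ — or, more transparently, differentiating $\int \Phi(\boldsymbol{\omega}[\rho_t](x,v))\,df_t$ using Remark \ref{test}-type identities together with $\partial_t(\nabla W*\rho_t)$ — makes the nonlinear alignment term cancel exactly against the transport term, showing $\partial_t\nu=0$. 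I would also verify the a priori integrability hypothesis \eqref{E-weakeqF} of Definition \ref{D1st}: by Proposition \ref{W2contf}(i) the second-order moments of $f_t$ are uniformly bounded, hence so are those of $\mu_t$ (using $|\omega|\lesssim|v|+|x|^{1-\alpha}$), and then $\int_0^T\int|x|^{1-\alpha}\,d\mu_t\,dt<\infty$ follows by interpolation; time-continuity of $\boldsymbol{\mu}$ in the narrow topology transfers from $\boldsymbol{f}$ exactly as in the opening paragraph of the proof of Proposition \ref{PCSexi}, via Lemma \ref{L-unif-conv-u} and the subquadratic growth of $\cT^{2\to 1}[\rho_t]$.

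The core is to derive the weak formulation \eqref{weakeqF} for $\boldsymbol{\mu}$ from \eqref{weakeqf} for $\boldsymbol{f}$. Given a test function $\eta\in C^1_b([0,T]\times\RR^d)$ compactly supported in $[0,T)$, I would mollify as before and set $\psi_t(x,v):=\eta_t(x,v+\nabla W_\varepsilon*\rho_t(x))$ with $W_\varepsilon$ as in \eqref{epep}, so that $\psi$ is an admissible test function for \eqref{weakeqf} (it has compact support in $[0,T)$ inherited from $\eta$, and $\nabla_v\psi_t(x,v)=\nabla_x\eta_t(x,v+\nabla W_\varepsilon*\rho_t(x))\cdot(\text{identity})$ is Lipschitz in $v$ because $\nabla\eta$ is $C^1$ hence Lipschitz on the relevant bounded region). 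By the chain rule, $\partial_t\psi_t = \partial_t\eta_t + \nabla_v\eta_t^\varepsilon\cdot\partial_t(\nabla W_\varepsilon*\rho_t)$ and $\nabla_x\psi_t = \nabla_x\eta_t^\varepsilon + \nabla_v\eta_t^\varepsilon\cdot D^2W_\varepsilon*\rho_t$, where by Lemma \ref{partialtu}-type arguments (now applied to $\boldsymbol{f}$ via Remark \ref{test}, using $\partial_t(g*f_t)=-(\nabla_x g)*[vf_t]$) one has $\partial_t(\nabla W_\varepsilon*\rho_t)=-[D^2W_\varepsilon*(vf_t)]$. Substituting into \eqref{weakeqf} and collecting terms, the $v\cdot\nabla_x\psi$ term combines with $\partial_t\psi$ so that, after diagonalizing the nonlinear term of \eqref{weakeqf} and the new bilinear $D^2W_\varepsilon$-terms by the $(x,v)\leftrightarrow(x',v')$ antisymmetry, one obtains precisely \eqref{weakeqF} for $\eta$ written in the $(x,\boldsymbol{\omega}[\rho_t])$ variables up to an $\varepsilon$-error — this is the bookkeeping dual to Steps 1–2 of Proposition \ref{PCSexi}.

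Then I would pass to the limit $\varepsilon\to 0$. The linear terms converge by Lemma \ref{phiuniform}-type bounds ($|\nabla W_\varepsilon*\rho_t-\nabla W*\rho_t|_\infty\lesssim\varepsilon^{1-\alpha}$) and dominated convergence using the uniform moment bounds from Proposition \ref{W2contf}(i); the only delicate term is the diagonalized nonlinear one, for which I would introduce $g_\varepsilon$ and $g$ exactly as in Lemma \ref{gepg} but with $\boldsymbol{u}_\varepsilon[\rho_t]$, $\boldsymbol{u}[\rho_t]$ replaced by $v+\nabla W_\varepsilon*\rho_t$, $v+\nabla W*\rho_t$, and establish the same case split ($\alpha\le\tfrac23$ vs. $\alpha\in(\tfrac23,1)$), with the bad case controlled by the hypothesis \eqref{nudiagf} — which is exactly why \eqref{nudiagf} is assumed. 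Here I need the collision set ${\rm Col}$ of $\boldsymbol{f}$ to be $\lambda$-negligible so that the pointwise limit $h_\varepsilon\to h$ holds $\lambda$-a.e.; this is the content of Lemma \ref{enstrophy}, but stated there for $\boldsymbol{\mu}$, so I would invoke the negligibility directly from the energy dissipation inequality \eqref{weakeqfa} in Definition \ref{DCS}, which already gives $\int_0^T\int_{\RR^{4d}\setminus\Delta}|v-v'|^2\phi(|x-x'|)\,d(f_s\otimes f_s)\,ds<\infty$ and hence forces $\lambda(\{v\neq v',\,x=x'\})=0$. The main obstacle, as in Proposition \ref{PCSexi}, is precisely this domination step in the singular regime $\alpha\in(\tfrac23,1)$: ensuring that the mollified cross-term $g_\varepsilon$ is bounded uniformly in $\varepsilon$ by a fixed $\lambda$-integrable function, which is where the unusual exponent $\frac{2-3\alpha}{1-\alpha}$ and assumption \eqref{nudiagf} are indispensable; everything else is a faithful transcription of the machinery already developed.
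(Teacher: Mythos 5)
Your proposal mirrors the paper's own proof closely: mollify the test function as $\psi_t(x,v)=\eta_t(x,v+\nabla W_\varepsilon*\rho_t(x))$, compute derivatives via the chain rule and the transport identity $\partial_t(\nabla W_\varepsilon*\rho_t)=-[D^2W_\varepsilon*(vf_t)]$, symmetrize by $(x,v)\leftrightarrow(x',v')$ so the surviving singular term carries a factor $D^2W-D^2W_\varepsilon$, and pass to the limit $\varepsilon\to 0$ by dominated convergence with a variant of Lemma~\ref{gepg} controlled by \eqref{nudiagf}, using the $\tilde\lambda$-negligibility of $\mathrm{Col}$ coming directly from \eqref{weakeqfa}. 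The structural verifications you add (propagation of moments, time-continuity, hypothesis \eqref{E-weakeqF}) are correct and useful, though the paper leaves them implicit.

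There is, however, one genuine gap: you claim that $\nabla_v\psi_t(x,v)=\nabla_\omega\eta_t(x,v+\nabla W_\varepsilon*\rho_t(x))$ (note your $\nabla_x\eta$ is a typo) is Lipschitz in $v$ ``because $\nabla\eta$ is $C^1$ hence Lipschitz on the relevant bounded region.'' This is wrong on two counts. Test functions in Definition~\ref{D1st} are only required to lie in $C^1_b([0,T]\times\RR^{2d})$, so $\nabla_\omega\eta$ is merely bounded and continuous, not $C^1$ and not Lipschitz; and there is no ``relevant bounded region,'' since compact support is required only in time, not in $(x,\omega)$. Consequently the mollified $\psi$ need not satisfy the Lipschitz hypothesis in Definition~\ref{DCS}, and you cannot test \eqref{weakeqf} against it. The paper handles this by first proving \eqref{weakeqF} only for the dense subclass of $\eta$ with $\nabla_\omega\eta_t$ uniformly Lipschitz in $t$, then extending to $\eta\in C^1_c$ by uniform approximation of the uniformly-continuous $\nabla_\omega\eta$ by bounded-Lipschitz functions, and finally to $\eta\in C^1_b$ by the cut-off argument of Proposition~\ref{W2contf}(iii) via the moment bounds of Proposition~\ref{W2contf}(i). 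With this density-and-approximation step inserted, your argument is otherwise sound and reproduces the paper's proof faithfully.
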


\begin{proof}[Sketch of the proof]
Our approach will be to invert all the argumentation introduced in the proof of Proposition \ref{PCSexi}, now aiming to show that $\mu_t=(\cT^{2\to 1}[\rho_t])_{\#} f_t$ satisfies \eqref{weakeqF} for any admissible test function $\eta\in C^1_b([0,T]\times \RR^{2d})$ compactly supported in $[0,T)$. 

Let us remark first that, without loss of generality, we can restrict to the class of special test functions $\eta\in C^1_b([0,T]\times \RR^{2d})$ satisfying the extra condition (which we will need below) that $\nabla_\omega\eta_t$ is Lipschitz-continuous uniformly in $t\in [0,T]$ . Specifically, once the weak formulation \eqref{weakeqF} is proved for any such $\eta$ as above, then it also holds for any $\eta\in C^1_c([0,T]\times \RR^{2d})$ because $\nabla_\omega\eta$ is bounded and uniformly-continuous, and therefore it can be approximated uniformly by bounded-Lipschitz functions \cite[Theorem 6.8]{H-01}. Hence, \eqref{weakeqF} must also hold for any $\eta\in C^1_b([0,T]\times \RR^{2d})$ by performing a simple cut-off argument as in the proof of Proposition \ref{W2contf}(iii) thanks to the uniform propagation of moments obtained in Proposition \ref{W2contf}(i). 

Let us fix then $\varepsilon>0$ and any $\eta\in C^1_b([0,T]\times \RR^{2d})$ compactly supported in $[0,T)$ such that $\nabla_\omega \eta_t$ is Lipschitz-continuous uniformly in $t\in [0,T]$. We pullback along $\cT^{1\to 2}$ setting
\begin{align*}
    \psi_t(x,v) &:= \eta_t(x,\boldsymbol{\omega}_\varepsilon[\rho_t](x,v)),\\
    \boldsymbol{\omega}_\varepsilon[\rho_t](x,v) &:=v+\nabla W_\varepsilon * \rho_t(x),
\end{align*}
where again $W_\varepsilon$ is the mollification defined in \eqref{epep}. By a similar reasoning to Lemma \ref{partialtu} based on Remark \ref{test}, we have the identity
\begin{align*}
    \partial_t(\boldsymbol{\omega}_\varepsilon[\rho_t]) = -[D^2W_\varepsilon*(vf_t)]:=-\int_{\RR^d}D^2W_\varepsilon(x -x') v'\, df_t,
\end{align*}
where again we use the notation $[M*V]$ introduced in \eqref{E-convolution-matrix-vector}. Using the chain rule implies
\begin{align*}
    \partial_t\psi_t(x,v) &= \partial_t\eta_t(x,\boldsymbol{\omega}_\varepsilon[\rho_t](x,v)) - \nabla_\omega\eta_t(x,\boldsymbol{\omega}_\varepsilon[\rho_t](x,v))\cdot [D^2W_\varepsilon*(vf_t)](x),\\
    \nabla_x\psi_t(x,v) &= \nabla_x\eta_t(x,\boldsymbol{\omega}_\varepsilon[\rho_t](x,v)) + \nabla_\omega\eta_t(x,\boldsymbol{\omega}_\varepsilon[\rho_t](x,v))^\top (D^2W_\varepsilon* f_t(x)),\\
    \nabla_v\psi_t(x,v) &= \nabla_\omega\eta_t(x,\boldsymbol{\omega}_\varepsilon[\rho_t](x,v)).
\end{align*}
Arguing as in the proof of Lemma \ref{partialtu}, all of the above terms belong to $C_b([0,T]\times\RR^{2d})$ (note that Proposition \ref{W2contf} plays a role here). It is also easy to infer that $\nabla_v\psi$ is Lipschitz-continuous, based on the mollification in $\boldsymbol{\omega}_\varepsilon$. Therefore $\psi\in C^1_b([0,T]\times\RR^{2d})$ is compactly supported in $[0,T)$ with Lipschitz-continuous $\nabla_v\psi$. Thus $\psi$ is an admissible test function in the weak formulation for $\boldsymbol{f}$ according to Definition \ref{DCS}, see also Proposition \ref{W2contf}(iii). Consequently, we have
\begin{multline*}
    \int_0^T\int_{\RR^{2d}} \left(\partial_t\psi + v\cdot\nabla_x\psi\right)\, df_t\, dt \\
    + \frac{1}{2}\int_0^T\int_{\RR^{4d}\setminus\Delta}(\nabla_v\psi-\nabla_v\psi')^\top D^2 W(x-x') (v'-v)\, d(f_t\otimes f_t')\, dt=-\int_{\RR^{2d}}\psi_0\, df_0,
\end{multline*}
which after expressing $\psi$ in terms of $\eta$ reads
\begin{multline}\label{1to2-1}
    \int_0^T\int_{\RR^{2d}}\Big( \partial_t\eta_t(x,\boldsymbol{\omega}_\varepsilon[\rho_t](x,v)) + v\cdot\nabla_x\eta_t(x,\boldsymbol{\omega}_\varepsilon[\rho_t](x,v))\Big)\, df_t\, dt\\
    + \int_0^T\int_{\RR^{4d}\setminus\Delta}\Bigg(\big(\nabla_\omega\eta_t(x,\boldsymbol{\omega}_\varepsilon[\rho_t](x,v))-\nabla_\omega\eta_t(x',\boldsymbol{\omega}_\varepsilon[\rho_t](x',v')\big)^\top\\
    ~\qquad \qquad \qquad \quad\   \times(D^2 W - D^2W_\varepsilon)(x-x') (v'-v)\Bigg)\, d(f_t\otimes f_t')\, dt=-\int_{\RR^{2d}}\eta_0(x,\boldsymbol{\omega}_\varepsilon[\rho_0](x,v))\, df_0.
\end{multline}
Next we pass with $\varepsilon$ to $0$. The key point is to converge with the second integral on the left-hand side of \eqref{1to2-1} to $0$. We follow the main ideas of {\sc Step 3} and {\sc Step 5} of Proposition \ref{PCSexi}:

\medskip

$\diamond$ {\sc Step 1}. We prove $\tilde\lambda$-a.e. convergence to $0$ of the integrand, where $\tilde\lambda$ is now defined as 
$$\tilde\lambda:=f_t(x,v)\otimes f_t(x',v')\otimes \mathcal{L}^1_{\mathlarger{\llcorner} [0,T]}(t).$$
Specifically, we infer pointwise convergence in the complement of the collision set ${\rm Col}$ defined in \eqref{colisionset}, which is $\tilde\lambda$-negligible under the energy dissipation assumption \eqref{weakeqfa}.

\medskip

$\diamond$ {\sc Step 2}. We dominate the integrand by an $L^1_{\tilde{\lambda}}([0,T]\times \mathbb{R}^{4d})$ function and use the dominated convergence theorem. To this end we employ a variant of Lemma \ref{gepg} with
\begin{align*}
    A = \Big\{(x,x',v,v')\in \mathbb{R}^{4d}:\,|v-v'|\leq\frac{3H}{1-\alpha}|x-x'|^{1-\alpha}\Big\},
\end{align*}
and replacing $\boldsymbol{u}_\varepsilon[\rho_t]$ with $\boldsymbol{\omega}_\varepsilon[\rho_t]$. Following the argument in {\sc Step 5} of the proof of Proposition \ref{PCSexi}, we upper-bound the intergrand in the middle term of \eqref{1to2-1} by the function
\begin{align*}
    |x-x'|^{2-\alpha} + |v-v'|^2\phi(|x-x'|) + \left\vert\boldsymbol{\omega}_\varepsilon[\rho_t](x,v)-\boldsymbol{\omega}_\varepsilon[ \rho_t](x',v')\right\vert|v-v'|\phi(|x-x'|),
\end{align*}
by virtue of the Lipschitz-continuity which we are assuming on $\nabla_\omega\eta$. A similar argumentation as in Lemma \ref{gepg} allows bounding such $\varepsilon$-dependent functions by a $\varepsilon$-independent function that belongs to $L^1_{\tilde{\lambda}}([0,T]\times \mathbb{R}^{4d})$ by assumptions \eqref{weakeqfa}, \eqref{weakeqfb} and \eqref{nudiagf}.

\medskip

 Ultimately, note that from the above we obtain the identity
\begin{align*}
        \int_0^T\int_{\RR^{2d}} \Big(\partial_t\eta_t(x,v+\nabla W*\rho_t(x)) + v\cdot\nabla_x\eta_t(x,v+\nabla W*\rho_t(x))\Big)\,df_t\,dt\\
    =-\int_{\RR^{2d}}\eta_0(x,v+\nabla W*\rho_0(x))\,df_0.
\end{align*}
Recalling that $\mu_t=(\cT^{2\to 1}[\rho_t])_{\#} f_t$ by Definition \ref{D-change-variables}, we then arrive at \eqref{weakeqF}, which is satisfied for all $\eta\in C^1_b([0,T]\times \mathbb{R}^{2d})$ compactly supported in $[0,T)$.
\end{proof}

\subsection{Proofs of Theorems \ref{main2} and \ref{main3}}\label{sec:Cfinalp}
Theorems \ref{main2} and \ref{main3} follow by applying Theorem \ref{main1} to the results of \cite{PP-22-1-arxiv} obtained for the first order system \eqref{kurak}. The main hassle amounts to a suitable interpretation of the results and translation into language more suited for the second order system \eqref{sysk}.
\begin{proof}[Proof of Theorem \ref{main2}]
Given any initial datum $f_0\in{\mathcal P}_2(\RR^{2d})$, we can push it forward along the map $\cT^{2\to 1}$, thus obtaining $\mu_0\in {\mathcal P}_2(\RR^{2d})$.  Then, thanks to \eqref{nudiag0t0}, condition \eqref{nudiag0} is fulfilled. Therefore, by \cite[Theorem 5.3]{PP-22-1-arxiv} there exists a unique solution $\boldsymbol{\mu}$ to \eqref{kurak} in the sense of Definition \ref{D1st}, which by Theorem \ref{main1} can be transformed back into a solution $\boldsymbol{f}$ to \eqref{sysk} in the sense of Definition \ref{DCS}. Thus existence is proved. Uniqueness under the a priori assumption \eqref{nudiagf0} follows by transforming solutions to \eqref{sysk} into solutions of \eqref{kurak}, using Theorem \ref{main1}, and then recalling uniqueness of solutions to \eqref{kurak} established in \cite{PP-22-1-arxiv}.
\end{proof}

\begin{proof}[Proof of Theorem \ref{main3}]
Under the assumptions of Theorem \ref{main2}, we have that the solutions $\boldsymbol{f}$ and $\widetilde{\boldsymbol{f}}$ can be transformed into solutions $\boldsymbol{\mu}$ and $\widetilde{\boldsymbol{\mu}}$ to \eqref{kurak} in the sense of Definition \ref{D1st}. These new solutions inherit the properties of $\boldsymbol{f}$ and $\widetilde{\boldsymbol{f}}$. Namely, the assumption \eqref{main30} we have
\[
    \int_{\RR^{2d}}x d\mu_0 = \int_{\RR^{2d}}x d\widetilde{\mu}_0,
\]
and from \eqref{diamsupp} we infer
\begin{align*}
    {\mathcal D}_x^\mu &:= \diam ({\rm supp}_x \mu_0) = {\mathcal D}_x,\quad
    \widetilde{{\mathcal D}}_x^\mu := \diam ({\rm supp}_x \widetilde{\mu}_0)= \widetilde{{\mathcal D}}_x,\\
    {\mathcal D}_\omega^\mu &:= \diam ({\rm supp}_\omega \mu_0) = {\mathcal D}_\omega,\quad
    \widetilde{{\mathcal D}}_\omega^\mu := \diam ({\rm supp}_\omega \widetilde{\mu}_0) = \widetilde{{\mathcal D}}_\omega,
\end{align*}
for $\mathcal{D}_x$, $\widetilde{\mathcal{D}}_x$, $\mathcal{D}_\omega$ and $\widetilde{\mathcal{D}}_\omega$ defined by \eqref{diamsupp} in terms of $f_0$ and $\widetilde{f_0}$. Then, $\boldsymbol{\mu}$ and $\widetilde{\boldsymbol{\mu}}$ satisfy the assumptions of Corollary 5.14 and Theorem 5.13 from \cite{PP-22-1-arxiv}, which imply the following.
\begin{itemize}
    \item There exists a unique compactly supported equilibrium $\mu_\infty\in \mathcal{P}_{2}(\mathbb{R}^{2d})$ of \eqref{kurak} with the same $\omega$-marginal $\nu$ as $\boldsymbol{\mu}$, such that
$$\int_{\mathbb{R}^{2d}}x\,d\mu_0(x,\omega) =\int_{\mathbb{R}^{2d}}x\,d\mu_\infty(x,\omega),$$
and we also have
\begin{equation}\label{finalchanges1}
W_{2}(\mu_t,\mu_\infty)\leq Ce^{-{2}\phi(D_1)t},
\end{equation}
for all $t\geq 0$, where we denote
$$D_1:=\max\left\{\mathcal{D}_x^\mu,\left(\mathcal{D}_\omega^\mu\right)^{\frac{1}{1-\alpha}}\right\}.$$
    \item Moreover, we have 
    $$
    AW_2(\mu_t,\widetilde{\mu}_t)\leq \left(1+\frac{1}{2\phi(D_2)}\right)\,AW_2(\mu_0,\widetilde{\mu}_0^2),
    $$
    for all $t\geq 0$, where we denote
    $$D_2:=\max\left\{\mathcal{D}_x^\mu,\widetilde{\mathcal{D}}_x^\mu,\big(\mathcal{D}_\omega^\mu\big)^{\frac{1}{1-\alpha}},\big(\widetilde{\mathcal{D}}_\omega^\mu\big)^{\frac{1}{1-\alpha}}\right\},$$
and $AW_2$ is the {\it adapted Wasserstein-2 metric} defined as
\begin{equation*}
AW_2(\mu_1,\mu_2):=\left(\inf_{\hat{\nu}\in \Gamma(\nu_1,\nu_2)}\int_{\mathbb{R}^{2d}} \left(W_2^2(\mu_1^\omega,\mu_2^{\omega'})+\vert \omega-\omega'\vert^2\right)\,d\hat{\nu}(\omega,\omega')\right)^{1/2},
\end{equation*}
also called {\it nested} or {\it causal Wasserstein distance}, see \cite{L-18,PP-12,PP-14}. Here, $\nu_i:=\pi_{\omega\#}\mu_i$ and  $\{\mu_i^\omega\}_{\omega\in \mathbb{R}^d}$ is the associated disintegration introduced in \eqref{disint} for $i=1,2$.
\end{itemize}

\medskip

In order to finish the proof it remains to express the above information for $\boldsymbol{\mu}$ and $\widetilde{\boldsymbol{\mu}}$ in the language of $\boldsymbol{f}$ and $\widetilde{\boldsymbol{f}}$. This is done in the following steps:

\medskip

$\diamond$ {\sc Step 1}. Convergence of $\boldsymbol{f}$ to equilibrium.\\
The equilibrium $\mu_\infty$ satisfies assumptions of Theorem \ref{main1} (particularly it inherits assumption \eqref{nudiag0} from $\boldsymbol{\mu}$, which in turn inherits it from $\boldsymbol{f}$). Thus,
taking $f_\infty:= (\cT^{1\to 2}[\rho_\infty])_\#\mu_\infty$ with $\rho_\infty = \pi_{x\#}\mu_\infty$ we immediately see that $f_\infty$ is an equilibrium of \eqref{sysk} and satisfies \eqref{main30}. It remains to properly upper-bound $W_2(f_t,f_\infty)$ in terms of $W_2(\mu_t,\mu_\infty)$. Let $\gamma_t$ be an optimal transference plan between $\mu_t$ and $\mu_\infty$. Then $\gamma^f_t:=(\cT^{1\to 2}[\rho_t],\cT^{1\to 2}[\rho_\infty])_\#\gamma_t$ is an admissible plan connecting $f_t$ with $f_\infty$. Thus, using \eqref{finalchanges1} in the last line below,
\begin{align*}
    W_2^2(f_t,f_\infty)&\leq \int_{\RR^{4d}}|x-x'|^2 + |v-v'|^2 \,d\gamma^f_t\\
    &=  \int_{\RR^{4d}}|x-x'|^2 + |\omega-\omega' + \nabla W*\rho_\infty(x) - \nabla W*\rho_t(x')|^2 \,d\gamma_t\\
    &\lesssim \int_{\RR^{4d}}|x-x'|^2 + |\omega-\omega'|^2 d\gamma_t + \int_{\RR^{4d}} |x-x'|^{2-2\alpha} \,d\gamma_t\\
    &\lesssim  W_2^2(\mu_t,\mu_\infty) + W_2^{2-2\alpha}(\mu_t,\mu_\infty)\\
    &\lesssim e^{-4\phi(D_1)t} + e^{-4(1-\alpha)\phi(D_1)t}\lesssim e^{-4(1-\alpha)\phi(D_1)t},
\end{align*}
for every $t\geq 0$, and therefore \eqref{main31} is proven.

\medskip

$\diamond$ {\sc Step 2}. Uniform stability.\\
We simply define for all $\sigma_1$ and $\sigma_2$ in ${\mathcal P}_2(\RR^{2d})$ the distance
$$
{\rm dist}(\sigma_1,\sigma_2):= AW_2((\cT^{2\to 1}[\rho_1])_\#\sigma_1, (\cT^{2\to 1}[\rho_2])_\#\sigma_2),
$$
where we define $\rho_1 := \pi_{x\#}\sigma_1$ and $\rho_2 := \pi_{x\#}\sigma_2$. Verifying that ${\rm dist}(\cdot,\cdot)$ defines indeed a distance follows from the fact that $AW_2(\cdot,\cdot)$ is a distance and that the transformation $\sigma\mapsto (\cT^{2\to 1}[\rho])_\#\sigma$ for $\rho=\pi_{x\#}\sigma$ is bijective, as explained in the paragraph below Definition \ref{D-change-variables}.
\end{proof}


\appendix

\section{Properties of potential $W$}\label{Appendix-convexity-W}

In this part, we summarize the main convexity properties of the potential function $W$ along with some useful identities and bounds which are used throughout the paper. We start with the following technical lemma.

\begin{lem}\label{szwarc}
The following properties hold true
\begin{enumerate}[label=(\roman*)]
    \item (Convexity: first-order condition)
    $$(\nabla W(x)-\nabla W(y))\cdot (x-y)\geq \Lambda_1(x,y,\alpha)\vert x-y\vert^2.$$
    \item (Convexity: second order condition)
    $$v^\top\ D^2 W(x)\ v\geq \Lambda_2(x,\alpha)\vert v\vert^2.$$
    \item (Outwards pointing gradient)
    $$x\cdot\nabla W(x) = \frac{1}{1-\alpha}|x|^{2-\alpha}\geq 0.$$
    \item (Bound of the Hessian)
    $$|D^2 W(x)|\leq \frac{1+\alpha}{1-\alpha}\phi(|x|).$$
\end{enumerate}
for every $x,x'\in \mathbb{R}^d$ and $v\in \mathbb{R}^d$. Here, the functions $\Lambda_1$ and $\Lambda_2$ take the form
$$\Lambda_1(x,y,\alpha):=\int_0^1 \phi(\vert (1-\tau)x+\tau y\vert)\,d\tau,\quad \Lambda_2(x,\alpha):=\phi(\vert x\vert).$$
Since $\Lambda_1,\Lambda_2\geq 0$, the potential $W$ is in particular convex.
\end{lem}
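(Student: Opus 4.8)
The plan is to establish the four items by direct computation from the explicit expressions
\[
\nabla W(x) = \tfrac{1}{1-\alpha}|x|^{-\alpha}x, \qquad D^2W(x) = \tfrac{1}{1-\alpha}|x|^{-\alpha}\Big(I_d - \alpha\,\tfrac{x}{|x|}\otimes\tfrac{x}{|x|}\Big),
\]
recorded in the Introduction, treating $(iii)$, $(ii)$ and $(iv)$ first since they are purely algebraic and pointwise on $\mathbb{R}^d\setminus\{0\}$, and then deducing $(i)$ from $(ii)$ by integrating along segments.

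For $(iii)$ one simply writes $x\cdot\nabla W(x) = \tfrac{1}{1-\alpha}|x|^{-\alpha}(x\cdot x) = \tfrac{1}{1-\alpha}|x|^{2-\alpha}$, which is nonnegative since $\alpha\in(0,1)$. For $(ii)$, expanding the quadratic form gives
\[
v^\top D^2W(x)\,v = \tfrac{1}{1-\alpha}|x|^{-\alpha}\Big(|v|^2 - \alpha\,\tfrac{(x\cdot v)^2}{|x|^2}\Big),
\]
and the Cauchy--Schwarz inequality $(x\cdot v)^2\le |x|^2|v|^2$ shows the bracket is at least $(1-\alpha)|v|^2$; hence $v^\top D^2W(x)v \ge |x|^{-\alpha}|v|^2 = \phi(|x|)|v|^2 = \Lambda_2(x,\alpha)|v|^2$. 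For $(iv)$, writing $u:=x/|x|$ and applying the triangle inequality for the operator norm together with $|I_d|=1$ and $|u\otimes u|=|u|^2=1$ yields $|I_d-\alpha\,u\otimes u|\le 1+\alpha$, whence $|D^2W(x)|\le\tfrac{1+\alpha}{1-\alpha}\phi(|x|)$.

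The only point requiring genuine care is $(i)$. Fixing $x\ne y$ (the case $x=y$, where both sides vanish, being trivial), I would parametrize the segment by $\gamma(\tau):=(1-\tau)y+\tau x$ for $\tau\in[0,1]$, so that $\gamma'\equiv x-y$, and then argue that $\tau\mapsto\nabla W(\gamma(\tau))$ is absolutely continuous on $[0,1]$ \emph{even when the origin lies on the segment}: indeed $\nabla W$ extends continuously to all of $\mathbb{R}^d$ with $\nabla W(0)=0$, is $C^1$ off the origin with $|D^2W(z)|\lesssim|z|^{-\alpha}$, and the scalar map $\tau\mapsto|\gamma(\tau)|$ vanishes at most once, near which $|\gamma(\tau)|^{-\alpha}$ is integrable because $\alpha<1$; thus $\tau\mapsto D^2W(\gamma(\tau))(x-y)\in L^1(0,1)$ and the fundamental theorem of calculus (and differentiation under the integral sign) applies across the singularity. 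The same integrability is what keeps $\Lambda_1(x,y,\alpha)$ finite, so that the asserted inequality is meaningful. This gives
\[
(\nabla W(x)-\nabla W(y))\cdot(x-y) = \int_0^1 (x-y)^\top D^2W(\gamma(\tau))(x-y)\,d\tau \ge |x-y|^2\int_0^1\phi(|\gamma(\tau)|)\,d\tau,
\]
where the lower bound uses $(ii)$ under the integral sign, and the change of variables $\tau\mapsto 1-\tau$ identifies $\int_0^1\phi(|\gamma(\tau)|)\,d\tau$ with $\Lambda_1(x,y,\alpha)=\int_0^1\phi(|(1-\tau)x+\tau y|)\,d\tau$. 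Finally $\Lambda_1,\Lambda_2\ge0$ since $\phi\ge0$, so $W$ is convex.

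The \emph{main obstacle} is precisely the justification in $(i)$ of the fundamental theorem of calculus when the segment $[y,x]$ passes through the origin, where $D^2W$ is singular; once the $L^1$-integrability afforded by $\alpha<1$ is invoked, everything else is routine bookkeeping with the explicit formulas for $\nabla W$ and $D^2W$.
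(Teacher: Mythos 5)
Your proof follows essentially the same route as the paper's: Cauchy--Schwarz applied to the explicit Hessian for $(ii)$, a first-order Taylor expansion (i.e.\ integration of the Hessian along the segment) to deduce $(i)$ from $(ii)$, direct computation for $(iii)$, and the triangle inequality for the operator norm for $(iv)$. The paper states ``$(i)$ follows from $(ii)$ by a first-order Taylor expansion'' without further comment, whereas you explicitly justify the fundamental theorem of calculus across a possible passage of the segment through the origin via the $L^1$-integrability of $|\gamma(\tau)|^{-\alpha}$ granted by $\alpha<1$; this is a welcome clarification of a point the paper leaves implicit, not a different method.
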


\begin{proof}
Recall that by direct computation we have
\begin{align}
\nabla W(x) &= \frac{1}{1-\alpha}\phi(|x|)\,x,\label{E-W-gradient}\\
D^2 W(x) &= \frac{1}{1-\alpha}\phi(|x|)\left(I_d-\alpha \frac{x}{|x|}\otimes \frac{x}{|x|}\right),\label{E-W-Hessian}
\end{align}
Then, $(ii)$ follows from \eqref{E-W-Hessian} by the Cauchy--Schwartz inequality and $(i)$ follows from $(ii)$ by a first-order Taylor expansion. Moreover, $(iii)$ is trivial in view of \eqref{E-W-gradient}, and $(iv)$ follows from \eqref{E-W-Hessian} by the triangle inequality for the matrix norm.
\end{proof}

Consequently, using the $0$-convexity of $W$ and the Cauchy-Schwarz inequality we obtain one-sided Lipschitz continuity of the velocity field associated with equation \eqref{kurak}. More specifically, we obtain the following set of properties.

\begin{cor}\label{C-sided-Lipschitz}
Consider any $\rho\in \mathcal{P}(\mathbb{R}^d)$ with $\int_{\mathbb{R}^d}|x|^{1-\alpha}\,d\rho(x)<\infty$, and let $\boldsymbol{v}[\rho]=(\boldsymbol{u}[\rho],0)$ be the velocity field of the Kuramoto-type equation \eqref{kurak}, that is,
$$\boldsymbol{u}[\rho](x,\omega)=\omega-\nabla W*\rho(x).$$
Then, $\boldsymbol{v}[\rho]$ verifies the following conditions:
\begin{enumerate}[label=(\roman*)]
\item (Continuity) $\boldsymbol{v}[\rho]\in C(\mathbb{R}^{2d},\mathbb{R}^{2d})$.
\item (One-sided Lipschitz) We have
$$(\boldsymbol{v}[\rho](x,\omega)-\boldsymbol{v}[\rho](x',\omega'))\cdot((x,\omega)-(x',\omega'))\leq \frac{1}{2}\vert (x,\omega)-(x',\omega')\vert^2,$$
for any $x,x',\omega,\omega'\in \mathbb{R}^d$.
\item (Sublinear growth) We have
$$\vert \boldsymbol{v}[\rho](x,\omega)\vert\leq \frac{1}{1-\alpha}M_{1-\alpha}(\rho)+\frac{1}{1-\alpha}\vert x\vert^{1-\alpha}+\vert \omega\vert,$$
for any $x,\omega\in \mathbb{R}^d$, with $M_{1-\alpha}(\rho):=\int_{\mathbb{R}^{2d}}|x|^{1-\alpha}\,d\mu(x,\omega).$
\end{enumerate}
\end{cor}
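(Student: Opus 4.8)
The plan is to verify the three items of Corollary \ref{C-sided-Lipschitz} separately, relying only on the explicit formula $\nabla W(x)=\tfrac{1}{1-\alpha}|x|^{-\alpha}x$ and the convexity information gathered in Lemma \ref{szwarc}. Throughout I will use repeatedly that $t\mapsto t^{1-\alpha}$ is subadditive on $[0,\infty)$ for $\alpha\in(0,1)$, so that $|x-y|^{1-\alpha}\le |x|^{1-\alpha}+|y|^{1-\alpha}$. The sublinear growth in $(iii)$ comes out immediately: $|\boldsymbol v[\rho](x,\omega)|=|\boldsymbol u[\rho](x,\omega)|\le |\omega|+|\nabla W*\rho(x)|$, and the convolution is bounded by $|\nabla W*\rho(x)|\le \tfrac{1}{1-\alpha}\int_{\mathbb{R}^d}|x-y|^{1-\alpha}\,d\rho(y)\le \tfrac{1}{1-\alpha}|x|^{1-\alpha}+\tfrac{1}{1-\alpha}M_{1-\alpha}(\rho)$ by subadditivity, which is exactly the claimed inequality and in particular shows that $\boldsymbol v[\rho]$ is finite everywhere under the moment hypothesis on $\rho$.

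For the continuity in $(i)$: since the $\omega$-component of $\boldsymbol v[\rho]$ vanishes and $(x,\omega)\mapsto\omega$ is trivially continuous, it suffices to show that $x\mapsto\nabla W*\rho(x)=\int_{\mathbb{R}^d}\nabla W(x-y)\,d\rho(y)$ is continuous. I would fix a sequence $x_n\to x$, assume without loss that all $x_n$ lie in some ball $B_R$, and pass to the limit by dominated convergence: the integrand converges pointwise because $\nabla W\in C(\mathbb{R}^d,\mathbb{R}^d)$ (it extends continuously at the origin by $\nabla W(0)=0$, as $|\nabla W(x)|=\tfrac{1}{1-\alpha}|x|^{1-\alpha}\to0$), while $|\nabla W(x_n-y)|\le\tfrac{1}{1-\alpha}(R^{1-\alpha}+|y|^{1-\alpha})$ provides a $\rho$-integrable dominating function thanks to $\int_{\mathbb{R}^d}|y|^{1-\alpha}\,d\rho(y)<\infty$. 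Hence $\boldsymbol v[\rho]\in C(\mathbb{R}^{2d},\mathbb{R}^{2d})$.

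For the one-sided Lipschitz estimate in $(ii)$: because the $\omega$-component of $\boldsymbol v[\rho]$ is zero, the inner product reduces to $(\boldsymbol u[\rho](x,\omega)-\boldsymbol u[\rho](x',\omega'))\cdot(x-x')$, and writing $\boldsymbol u[\rho](x,\omega)-\boldsymbol u[\rho](x',\omega')=(\omega-\omega')-(\nabla W*\rho(x)-\nabla W*\rho(x'))$ splits it into two pieces. The first is controlled by Young's inequality, $(\omega-\omega')\cdot(x-x')\le\tfrac12|x-x'|^2+\tfrac12|\omega-\omega'|^2=\tfrac12|(x,\omega)-(x',\omega')|^2$. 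For the second, I would write $\nabla W*\rho(x)-\nabla W*\rho(x')=\int_{\mathbb{R}^d}(\nabla W(x-y)-\nabla W(x'-y))\,d\rho(y)$ and apply Lemma \ref{szwarc}$(i)$ pointwise in $y$, which gives $(\nabla W(x-y)-\nabla W(x'-y))\cdot(x-x')\ge\Lambda_1(x-y,x'-y,\alpha)|x-x'|^2\ge0$; integrating in $y$ yields $(\nabla W*\rho(x)-\nabla W*\rho(x'))\cdot(x-x')\ge0$, so subtracting it can only decrease the total. Adding the two bounds gives the stated inequality.

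All the computations are elementary; the only point requiring a little care is making the continuity argument rigorous despite $\nabla W$ being unbounded, which is precisely why the hypothesis $\int_{\mathbb{R}^d}|x|^{1-\alpha}\,d\rho<\infty$ is imposed and is handled by the dominated convergence theorem as above. I do not anticipate any genuine obstacle.
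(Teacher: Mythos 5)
Your proof is correct and matches the paper's argument for items (ii) and (iii) essentially verbatim. The only variation is in (i): the paper fixes the integrand $\nabla W(x-\cdot)$ and pushes $\rho$ forward by the translation $\tau_{x-x_n}$, then invokes a uniform-integrability criterion for narrow convergence (citing \cite[Lemma 5.1.7]{AGS-08}); you instead keep $\rho$ fixed, let the argument $x_n-y$ vary, and apply dominated convergence directly with the $n$-independent majorant $\tfrac{1}{1-\alpha}(R^{1-\alpha}+|y|^{1-\alpha})$ supplied by subadditivity of $t\mapsto t^{1-\alpha}$. Your route is a bit more elementary and perfectly adequate here since $|\nabla W|$ has an explicit pointwise bound; the two arguments are otherwise interchangeable.
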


\begin{proof}
$\diamond$ {\sc Step 1}. Proof of $(i)$.\\
Let us take $(x_n,\omega_n)\rightarrow (x,\omega)$ in $\mathbb{R}^{2d}$ and note that
$$\boldsymbol{u}[\rho](x_n,\omega_n)-\boldsymbol{u}[\rho](x,\omega)=(\omega_n-\omega)-\int_{\mathbb{R}^d}\nabla W(x-x')\,d(\rho_n-\rho_t)(x'),$$
where $\rho_n:=\tau_{x-x_n\#}\rho$, and $\tau_{x-x_n}$ denotes the translation operator with shift $x-x_n$. Note that by definition we have that $\rho_n\rightarrow \rho$ narrowly, but $\nabla W$ is unbounded though. To prove convergence to zero of the second term  above we need to ensure (see \cite[Lemma 5.1.7]{AGS-08}) that the function 
$$x'\in \mathbb{R}^d\longmapsto \vert \nabla W(x-x')\vert \lesssim 1+|x'|^{1-\alpha},$$
is uniformly integrable with respect to $\{\rho_n\}_{n\in \mathbb{N}}$, that is, 
$$\lim_{R\rightarrow \infty}\sup_{n\in \mathbb{N}}\int_{|x'|\geq R}|x'|^{1-\alpha}\,d\rho_n(x')=0.$$
However, this is clear because we have
\begin{align*}
\int_{|x'|\geq R}|x'|^{1-\alpha}\,d\rho_n(x')&=\int_{|x'|\geq \frac{R}{2}}|x'+(x-x_n)|^{1-\alpha}\,d\rho(x')\\
&\lesssim |x_n-x|^{1-\alpha}\int_{|x'|\geq \frac{R}{2}}d\rho(x')+\int_{|x'|\geq \frac{R}{2}}|x'|^{1-\alpha}d\rho(x'),
\end{align*}
and the two factors in the right hand side vanish as $R\rightarrow \infty$ uniformly in $n\in \mathbb{N}$ because $\{x_n\}_{n\in \mathbb{N}}$ is bounded and $\int_{\mathbb{R}^d}1+|x'|^{1-\alpha}\,d\rho(x')<\infty$ by hypothesis.

\medskip

$\diamond$ {\sc Step 2}. Proof of $(ii)$.\\
By definition we have
\begin{align*}
(\boldsymbol{v}[\rho]&(x,\omega)-\boldsymbol{v}[\rho](x',\omega'))\cdot ((x,\omega)-(x',\omega'))\\
&=(\omega-\omega')\cdot(x-x')-\int_{\mathbb{R}^d} (\nabla W(x-z)-\nabla W(x'-z))\cdot (x-x')\,d\rho(z)\\
&\leq \frac{1}{2}(|x-x'|^2+|\omega-\omega'|^2),
\end{align*}
for every $(x,\omega),(x',\omega)\in \mathbb{R}^{2d}$, where in the last step we have used the Cauchy-Schwarz and Young inequalities in the first term, and the convexity property of $W$ in Lemma \ref{szwarc}(i).

\medskip

$\diamond$ {\sc Step 3}. Proof of $(iii)$.\\
This part is straightforward by definition of $\boldsymbol{u}[\rho]$ since we have
\begin{align*}
|\boldsymbol{v}[\rho](x,\omega)|&=\left\vert \omega-\int_{\mathbb{R}^d}\nabla W(x-x')\,d\rho(x')\right\vert\\
&\leq |\omega|+\frac{1}{1-\alpha}\int_{\mathbb{R}^d}|x-x'|^{1-\alpha}\,d\rho(x')\\
&\leq \frac{1}{1-\alpha}\int_{\mathbb{R}^d}|x'|^{1-\alpha}\,d\rho(x')+\frac{1}{1-\alpha}|x|^{1-\alpha}+|\omega|,
\end{align*}
for every $(x,\omega)\in \mathbb{R}^{2d}$.
\end{proof}

We remark that the above regularity is key for the well-posedness of a unique global flow of $\boldsymbol{v}[\rho]$, and solving \eqref{kurak} via the method of characteristics, see \cite{P-19-arxiv}. More specifically, we have.

\begin{cor}[Lagrangian solutions]\label{C-Lagrangian-solution}
Consider any $T>0$ and $\mu_0\in \mathcal{P}_2(\mathbb{R}^{2d})$, and let $\boldsymbol{\mu}$ be any weak measure-valued solution to \eqref{kurak} with initial datum $\mu_0$ in the sense of Definition \ref{D1st}. Let $Z(t;x,\omega)=(X(t;x,\omega),\omega)$ be the characteristic flow, which solves
\begin{align}\label{E-characteristic-system}
\begin{aligned}
&\frac{d}{dt}X(t;x,\omega)=\boldsymbol{u}[\rho_t](Z(t;x,\omega)), && \mbox{a.e.}\ t\in [0,T],\\
&X(0;x,\omega)=x, && (x,\omega)\in \mathbb{R}^{2d}.
\end{aligned}
\end{align}
Then, \eqref{E-characteristic-system} admits a global Carath\'eodory solution for any $(x,\omega)\in \mathbb{R}^{2d}$, which is unique forward-in-time. Moreover, the weak measure-valued solution $\boldsymbol{\mu}$ is Lagrangian, {\it i.e.}, it verifies
\begin{equation}\label{E-lagrangian-solution}
\mu_t=Z(t;\cdot,\cdot)_{\#}\mu_0,\quad t\in [0,T].
\end{equation}
\end{cor}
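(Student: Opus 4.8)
The plan is to establish Corollary~\ref{C-Lagrangian-solution} in two stages: first solving the characteristic ODE \eqref{E-characteristic-system}, and then identifying the solution $\boldsymbol{\mu}$ with the push-forward of $\mu_0$ along the resulting flow. For the first stage, I would invoke the regularity collected in Corollary~\ref{C-sided-Lipschitz}: for fixed $\omega$, the vector field $(t,x)\mapsto \boldsymbol{u}[\rho_t](x,\omega)$ is continuous in $x$ by Lemma~\ref{partialtu} (which upgrades the spatial continuity of Corollary~\ref{C-sided-Lipschitz}(i) to joint space-time continuity), and has sublinear growth uniformly in $t$ by Corollary~\ref{C-sided-Lipschitz}(iii). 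Continuity plus sublinear growth gives, via Peano's theorem and a Gr\"onwall a priori bound (exactly the computation already performed in the proof of Proposition~\ref{W2cont}(i)), a global-in-time Carath\'eodory solution $X(t;x,\omega)$ for every initial datum $(x,\omega)\in\mathbb{R}^{2d}$, with no finite-time blow-up. Forward uniqueness comes from Corollary~\ref{C-sided-Lipschitz}(ii): the one-sided Lipschitz estimate $(\boldsymbol{v}[\rho](x,\omega)-\boldsymbol{v}[\rho](x',\omega'))\cdot((x,\omega)-(x',\omega'))\leq \tfrac12|(x,\omega)-(x',\omega')|^2$ yields, for two solutions with the same data, $\tfrac{d}{dt}|Z(t;x,\omega)-Z(t;x',\omega')|^2\leq |Z(t;x,\omega)-Z(t;x',\omega')|^2$, and Gr\"onwall forces the difference to stay zero forward in time. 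This is the standard Filippov-type argument and can be cited from \cite{P-19-arxiv}.

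For the second stage, the goal is the Lagrangian identity \eqref{E-lagrangian-solution}. The natural route is the superposition principle / uniqueness of the continuity equation: one shows that $t\mapsto Z(t;\cdot,\cdot)_\#\mu_0$ is itself a weak measure-valued solution of \eqref{kurak} in the sense of Definition~\ref{D1st}, and then concludes by the uniqueness statement in Corollary~\ref{C-measure-sol-gradient-flows}. That $\tilde\mu_t := Z(t;\cdot,\cdot)_\#\mu_0$ satisfies the weak formulation \eqref{weakeqF} is a direct chain-rule computation: for $\eta\in C^1_b([0,T]\times\mathbb{R}^d)$ compactly supported in $[0,T)$, one writes $\int \eta_t\,d\tilde\mu_t = \int \eta_t(X(t;x,\omega),\omega)\,d\mu_0$, differentiates in $t$ under the integral using the absolute continuity of $t\mapsto X(t;x,\omega)$ and the equation \eqref{E-characteristic-system}, and integrates from $0$ to $T$; the boundary term at $T$ vanishes by the support assumption on $\eta$. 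One must also check that $\tilde\mu$ has the integrability \eqref{E-weakeqF}, but this follows from the Gr\"onwall bound $|X(t;x,\omega)|\lesssim (1+|x|^2+|\omega|^2)^{1/2}$ together with $\mu_0\in\mathcal{P}_2(\mathbb{R}^{2d})$, and continuity in the narrow topology follows from continuity of $t\mapsto X(t;x,\omega)$ and dominated convergence. Since both $\boldsymbol{\mu}$ and $\tilde{\boldsymbol{\mu}}$ are weak measure-valued solutions with the same initial datum $\mu_0$, and such solutions are unique by Corollary~\ref{C-measure-sol-gradient-flows}, we get $\mu_t=\tilde\mu_t=Z(t;\cdot,\cdot)_\#\mu_0$ for all $t\in[0,T]$.

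The main obstacle — or at least the only delicate point — is the circularity to avoid: Proposition~\ref{W2cont}, Lemma~\ref{enstrophy} and Lemma~\ref{partialtu} all invoke Corollary~\ref{C-Lagrangian-solution}, so in an honest write-up one cannot use those results to prove it. The resolution is that the Lagrangian representation rests only on (a) the ODE well-posedness, which needs just Corollary~\ref{C-sided-Lipschitz} and the a priori integrability \eqref{E-weakeqF} built into Definition~\ref{D1st}, and (b) the uniqueness of weak measure-valued solutions from Corollary~\ref{C-measure-sol-gradient-flows}, which is imported wholesale from \cite{PP-22-1-arxiv} and does not itself depend on the Lagrangian picture. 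The spatial continuity of $\boldsymbol{u}[\rho_t]$ for fixed $t$ (rather than the joint space-time continuity of Lemma~\ref{partialtu}) is all that the Peano/Filippov argument requires, and that is exactly Corollary~\ref{C-sided-Lipschitz}(i); measurability in $t$ of $t\mapsto\boldsymbol{u}[\rho_t](x,\omega)$ follows from narrow continuity of $\boldsymbol{\mu}$. So the dependency chain is consistent provided one is careful to appeal only to Corollaries~\ref{C-sided-Lipschitz} and~\ref{C-measure-sol-gradient-flows}; everything else is routine.
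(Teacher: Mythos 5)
Your Stage~1 (Carath\'eodory existence + forward uniqueness from the one-sided Lipschitz bound of Corollary~\ref{C-sided-Lipschitz}) matches the paper's Step~1. The problem is your Stage~2, where the circularity you flag is real and your proposed resolution does not hold up.

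You claim that Corollary~\ref{C-measure-sol-gradient-flows} is ``imported wholesale from \cite{PP-22-1-arxiv} and does not itself depend on the Lagrangian picture.'' But look at the proof of that corollary: it identifies a Definition~\ref{D1st}-solution with a distributional solution in the sense of \cite[Definition 5.1]{PP-22-1-arxiv} precisely by citing $\boldsymbol{\mu}\in AC(0,T;\mathcal{P}_{2,\nu}(\mathbb{R}^{2d}))$, which is Proposition~\ref{W2cont}(ii), whose proof opens with ``By virtue of Corollary~\ref{C-Lagrangian-solution}\ldots.'' So the dependency chain is Corollary~\ref{C-Lagrangian-solution} $\Rightarrow$ Proposition~\ref{W2cont} $\Rightarrow$ Corollary~\ref{C-measure-sol-gradient-flows}, and invoking the last to prove the first is circular. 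The uniqueness in \cite{PP-22-1-arxiv} is for fibered gradient flows, i.e.\ for curves already known to be absolutely continuous in $(\mathcal{P}_{2,\nu},W_{2,\nu})$; without the Lagrangian representation one does not a priori know that a Definition~\ref{D1st}-solution belongs to that class, so the uniqueness cannot be applied. Your additional step of verifying that $\tilde\mu_t:=Z(t;\cdot,\cdot)_\#\mu_0$ is a solution does not help: you would still need to know $\mu_t=\tilde\mu_t$, which is exactly the uniqueness you cannot yet use.

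The paper sidesteps this entirely by using the Ambrosio--Crippa superposition principle \cite[Theorem 12]{AC-14}: from the distributional formulation \eqref{weakeqF} and the integrability
\[
\int_0^T\int_{\mathbb{R}^{2d}}\frac{|\boldsymbol{v}_t(x,\omega)|}{1+|x|+|\omega|}\,d\mu_t\,dt<\infty
\]
(which follows from Corollary~\ref{C-sided-Lipschitz}(iii) and \eqref{E-weakeqF}, no further input needed), one concludes that $\boldsymbol{\mu}$ is a superposition of characteristic trajectories. Forward uniqueness from Stage~1 then collapses the superposition onto the deterministic flow, yielding \eqref{E-lagrangian-solution}. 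The crucial structural point you missed is that the superposition principle is a representation theorem valid for \emph{every} distributional solution of the continuity equation under mild integrability, and therefore requires no uniqueness of measure-valued solutions whatsoever. Replacing your Stage~2 with this argument closes the gap; everything in your Stage~1 can stay.
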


\begin{proof}
We divide the proof into two parts.

\medskip

$\diamond$ {\sc Step 1}. Well posedness of \eqref{E-characteristic-system}.\\
For simplicity of notation, we set the velocity fields 
$$\boldsymbol{u}_t(x,\omega):=\boldsymbol{u}[\rho_t](x,\omega)\quad \mbox{and}\quad \boldsymbol{v}_t(x,\omega):=(\boldsymbol{u}_t(x,\omega),0).$$
We shall employ Carath\'eodory's existence theorem to prove the existence of Carath\'eodory solutions to the characteristic system \eqref{E-characteristic-system}, that is, absolutely continuous trajectories solving the above system for almost every $t\in [0,T]$. To do so, we need to verify the following three conditions in Carath\'eodory's existence theorem:
\begin{enumerate}[label=(\roman*)]
    \item (Time-measurability) The map $t\in [0,T]\mapsto \boldsymbol{v}_t(x,\omega)$ is measurable for each $(x,\omega)\in \mathbb{R}^{2d}$.
    \item (Space-continuity) The map $(x,\omega)\in \mathbb{R}^{2d}\mapsto \boldsymbol{v}_t(x,\omega)$ is continuous for {\it a.e.} $t\in [0,T]$.
    \item (Domination by $L^1$ function) For every $(x_0,\omega_0)\in \mathbb{R}^{2d}$ there exists $R>0$ such that 
    $$t\in [0,T]\mapsto \Vert\boldsymbol{v}_t(x,\omega)\Vert_{L^\infty(B_R(x_0)\times B_R(\omega_0))}\in L^1(0,T).$$
\end{enumerate}
The time-measurability in $(i)$ is clear, and the spacial-continuity in $(ii)$ follows from Corollary \ref{C-sided-Lipschitz}(i). Then, we only focus on the domination by a $L^1$ function of time. Consider any $(x_0,\omega_0)\in \mathbb{R}^{2d}$ and $R>0$ and let us use the sublinear growth in Corollary \ref{C-sided-Lipschitz}(iii) to obtain
$$\vert \boldsymbol{v}_t(x,\omega)\vert\lesssim |x_0|^{1-\alpha}+|\omega_0|+R^{1-\alpha}+R+\int_{\mathbb{R}^{2d}}|x|^{1-\alpha}\,d\mu_t(x,\omega),$$
for any $(x,\omega)\in B_R(x_0)\times B_R(\omega_0)$ and each $t\in [0,T]$. Note that the the right hand side belongs to $L^1(0,T)$ because of condition \eqref{E-weakeqF} in Definition \ref{D1st}. Hence, there is at least one local-in-time Carath\'eodory solution to \eqref{E-characteristic-system} for each $(x,\omega)\in \mathbb{R}^{2d}$. The solution is in fact globally defined in $[0,T]$ by Gr\"{o}nwall's lemma due to the sublinear growth of $\boldsymbol{v}_t$ and, in addition, it is unique forward-in-time in view of the one-sided Lipschitz condition in Corollary \ref{C-sided-Lipschitz}(ii).

\medskip

$\diamond$ {\sc Step 2}. Lagrangian solution.\\
To prove that the weak measure-valued solution $\boldsymbol{\mu}$ to \eqref{kurak} must propagate along the forward-unique characteristic flow in the above step, we follow the approach by {\sc Ambrosio} and {\sc Crippa} in \cite{AC-14}. Specifically, by definition we note that $\boldsymbol{\mu}$ is a solution to the continuity equation \eqref{kurak} in distributional sense \eqref{weakeqF} with given velocity field $\boldsymbol{v}_t$. In addition, such a velocity field verifies appropriate integrability conditions, namely,
$$\int_0^T\int_{\mathbb{R}^{2d}}\frac{|\boldsymbol{v}_t(x,\omega)|}{1+|x|+|\omega|}\,d\mu_t(x,\omega)\,dt<\infty,$$
by the sublinear growth in Corollary \ref{C-sided-Lipschitz}(iii) and the assumption \eqref{E-weakeqF} in Definition \ref{D1st}. Hence, the {\it superposition principle} in \cite[Theorem 12]{AC-14} implies that $\boldsymbol{\mu}$ can be represented as a {\it superposition solution}. Since the characteristic flow \eqref{E-characteristic-system} is well posed by the previous considerations, then we conclude that $\boldsymbol{\mu}$ must be Lagrangian, {\it i.e.}, \eqref{E-lagrangian-solution} holds true.
\end{proof}

\section{Two useful theorems}

In the final appendix we present two useful theorems: the classical disintegration theorem \cite[Theorem 5.3.1]{AGS-08} and the local-in-time mean-field limit, which follows from similar arguments to those developed by the second author in \cite{P-19-arxiv} for the singular Kuramoto model.

\begin{theo}[Disintegration]\label{dis}
Set $d_1,d_2\in \mathbb{N}$, define the projection onto the second component $\pi_2:\mathbb{R}^{d_1}\times \mathbb{R}^{d_2}\longrightarrow \mathbb{R}^{d_2}$ and consider $\mu\in \mathcal{P}(\mathbb{R}^{d_1}\times \mathbb{R}^{d_2})$ and $\nu:=(\pi_2)_\# \mu\in \mathcal{P}(\mathbb{R}^{d_2})$. Then, there exists a $\nu$-a.e. uniquely defined family of probability measures $\{\mu^{x_2}\}_{x_2\in \mathbb{R}^{d_2}}\subseteq \mathcal{P}(\mathbb{R}^{d_1})$ so that the following properties hold true:
\begin{enumerate}[label=(\roman*)]
\item For every Borel subset $B\subseteq \mathbb{R}^{d_1}$, the following map is Borel-measurable
$$
x_2\in \mathbb{R}^{d_2}  \longmapsto  \mu^{x_2}(B).
$$
\item For every Borel-measurable map $\varphi:\mathbb{R}^{d_1}\times \mathbb{R}^{d_2}\longmapsto [0,+\infty)$, we obtain
\begin{align}\label{disb}
\int_{\mathbb{R}^{d_1}\times \mathbb{R}^{d_2}} \varphi(x_1,x_2)\,d\mu(x_1,x_2)=\int_{\mathbb{R}^{d_2}}\left(\int_{\mathbb{R}^{d_1}}\varphi(x_1,x_2)\,d\mu^{x_2}(x_1)\right)\,d\nu(x_2).
\end{align}
\end{enumerate}
\end{theo}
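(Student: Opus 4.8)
The plan is to build the kernel $x_{2}\mapsto\mu^{x_{2}}$ out of Radon--Nikodym densities along a countable generating algebra and then upgrade those densities to genuine Borel probability measures, exploiting that finite Borel measures on $\mathbb{R}^{d_{1}}$ are completely determined by their multivariate distribution functions. First I would fix a countable algebra $\mathcal{A}$ of Borel subsets of $\mathbb{R}^{d_{1}}$ generating $\mathcal{B}(\mathbb{R}^{d_{1}})$ --- for instance the algebra generated by half-open rational boxes $\prod_{i=1}^{d_{1}}(a_{i},b_{i}]$ with $a_{i},b_{i}\in\mathbb{Q}\cup\{\pm\infty\}$. For each $A\in\mathcal{A}$ the set function $E\mapsto\mu(A\times E)$ is a finite nonnegative Borel measure on $\mathbb{R}^{d_{2}}$ dominated by $\nu$ (if $\nu(E)=\mu(\mathbb{R}^{d_{1}}\times E)=0$ then $\mu(A\times E)=0$), so the Radon--Nikodym theorem provides a Borel density $g_{A}:=\frac{d\mu(A\times\cdot)}{d\nu}$ which may be taken $[0,1]$-valued. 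Since $\mathcal{A}$ is countable, there is a single $\nu$-null set $N$ off which the family $\{g_{A}(x_{2})\}_{A\in\mathcal{A}}$ is a finitely additive, $[0,1]$-valued set function on $\mathcal{A}$ with $g_{\emptyset}(x_{2})=0$, $g_{\mathbb{R}^{d_{1}}}(x_{2})=1$, and monotone under inclusion; each of these countably many $\nu$-a.e.\ identities is imposed simultaneously by intersecting the corresponding null sets.

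The crux is to turn $A\mapsto g_{A}(x_{2})$ into a countably additive probability measure on $\mathcal{B}(\mathbb{R}^{d_{1}})$, for every $x_{2}\notin N$, in a way that remains Borel in $x_{2}$. I would do this concretely via multivariate distribution functions: for $t=(t_{1},\dots,t_{d_{1}})\in\mathbb{R}^{d_{1}}$ set $F(t,x_{2}):=\inf\{g_{(-\infty,q]}(x_{2}):q\in\mathbb{Q}^{d_{1}},\ q>t\}$, where $(-\infty,q]:=\prod_{i}(-\infty,q_{i}]$ and $q>t$ means $q_{i}>t_{i}$ for all $i$. Off $N$, the map $t\mapsto F(t,x_{2})$ is right-continuous, has the correct limits ($0$ as some $t_{i}\to-\infty$, $1$ as all $t_{i}\to+\infty$), and --- using the finite additivity of $g$ and monotone passage to the infimum --- has nonnegative increments over every box, hence is a bona fide $d_{1}$-dimensional cumulative distribution function; I let $\mu^{x_{2}}$ be the associated Lebesgue--Stieltjes probability measure. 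Measurability of $x_{2}\mapsto F(t,x_{2})$ is inherited from that of the $g_{(-\infty,q]}$ (a countable infimum of Borel functions), and then $x_{2}\mapsto\mu^{x_{2}}\!\big(\prod_{i}(a_{i},b_{i}]\big)$ is Borel, being a finite alternating sum of values of $F(\cdot,x_{2})$. This is the step I expect to be the main obstacle: checking \emph{uniformly in $x_{2}\notin N$} that $F(\cdot,x_{2})$ is a genuine multivariate CDF --- the nonnegativity of all rectangular increments is a family of conditions on infinitely many finite sums that must be extracted from the $\nu$-a.e.\ additivity relations on $\mathcal{A}$ after intersecting the relevant null sets --- together with the identification $\mu^{x_{2}}(A)=g_{A}(x_{2})$ for \emph{all} $A\in\mathcal{A}$, not merely for the generating half-spaces.

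With the kernel in hand, properties (i) and (ii) follow by monotone-class arguments. For (i): the collection of Borel $B\subseteq\mathbb{R}^{d_{1}}$ for which $x_{2}\mapsto\mu^{x_{2}}(B)$ is Borel is a $\lambda$-system containing the $\pi$-system of rational boxes, hence equals $\mathcal{B}(\mathbb{R}^{d_{1}})$ by Dynkin's lemma. For (ii): by construction $\int_{\mathbb{R}^{d_{2}}}g_{A}(x_{2})\,\mathbf{1}_{E}(x_{2})\,d\nu(x_{2})=\mu(A\times E)$ for $A\in\mathcal{A}$ and $E$ Borel, i.e.\ \eqref{disb} holds for $\varphi=\mathbf{1}_{A\times E}$; fixing $E$, the class of slices in $\mathbb{R}^{d_{1}}$ for which this persists is a $\lambda$-system, so it holds for $\varphi=\mathbf{1}_{B\times E}$ for all Borel $B$, then for $\varphi=\mathbf{1}_{C}$ with $C\in\mathcal{B}(\mathbb{R}^{d_{1}}\times\mathbb{R}^{d_{2}})$ by another $\lambda$-system argument, and finally for general nonnegative Borel $\varphi$ by linearity and the monotone convergence theorem applied to simple approximations. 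Uniqueness is then immediate: if $\{\widetilde{\mu}^{x_{2}}\}$ is another such family, then for each box $A$ in the countable $\pi$-system we have $\mu^{x_{2}}(A)=\widetilde{\mu}^{x_{2}}(A)$ for $\nu$-a.e.\ $x_{2}$ (both are Radon--Nikodym densities of $\mu(A\times\cdot)$); intersecting over the countably many such $A$ shows $\mu^{x_{2}}=\widetilde{\mu}^{x_{2}}$ on a $\pi$-system generating $\mathcal{B}(\mathbb{R}^{d_{1}})$ for $\nu$-a.e.\ $x_{2}$, whence $\mu^{x_{2}}=\widetilde{\mu}^{x_{2}}$ $\nu$-a.e.\ by uniqueness of measures.
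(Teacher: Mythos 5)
The paper does not prove this theorem; it simply cites \cite[Theorem 5.3.1]{AGS-08} in the appendix and uses the result as a black box. So there is no ``paper's own proof'' to compare against, only a reference. Your proposal supplies an actual self-contained proof, and it is essentially correct: it is the classical \emph{regular conditional distribution via multivariate CDFs} construction one finds in probability textbooks (Dudley, Billingsley, Kallenberg), specialized to Euclidean spaces. You correctly isolate the only delicate point, namely that the countably many $\nu$-a.e.\ identities (finite additivity, monotonicity, and nonnegativity of rational-box increments of the $g_A$'s) must be imposed simultaneously by intersecting null sets, and that the nonnegativity of rectangular increments of $F(\cdot,x_2)$ over \emph{real} boxes then follows by approximating each corner from above by rationals and using right-continuity, since a rectangular increment is a finite alternating sum and hence passes to the limit. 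The identification $\mu^{x_2}(A)=g_A(x_2)$ for all $A\in\mathcal{A}$, not just for the generating half-spaces, also needs the $\pi$--$\lambda$ argument you invoke, and your treatment of (i), (ii), and uniqueness by Dynkin's lemma and monotone convergence is standard and correct.

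The trade-off between the two routes is worth noting. The AGS theorem the paper cites is formulated for Radon measures on quite general separable metric (Radon/Polish) spaces, and its proof goes through abstract machinery (lifting to a countably generated sub-$\sigma$-algebra, Riesz representation, or measurable selection arguments) that does not rely on any linear or order structure of the underlying space. Your argument is more elementary and entirely concrete, but it leans crucially on the Euclidean structure of $\mathbb{R}^{d_1}$: the multivariate cumulative distribution function trick has no analogue on an abstract Polish space. Since the statement in the paper is only asserted for $\mathbb{R}^{d_1}\times\mathbb{R}^{d_2}$, your specialization is entirely adequate and arguably more transparent; you just give up the generality that makes the AGS reference applicable elsewhere.
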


\begin{pro}[Local-in-time mean field limit]\label{P-mean-field-local-in-time} 
Consider any initial datum $\mu_0\in \mathcal{P}_2(\mathbb{R}^{2d})$ and let $\boldsymbol{\mu}$ be the (unique) weak measure-valued solution of \eqref{kurak} issued at $\mu_0$. Assume that $\boldsymbol{\mu}^N$ is a sequence of empirical measures over the phase-space configurations $(x_1^N(t),\omega_1^N)$, $\ldots\,$, $(x_N^N(t),\omega_N^N)$ solving the particle system \eqref{kurap} such that
$$\lim_{N\rightarrow\infty} W_2(\mu_0^N,\mu_0)=0.$$
Then, we obtain the local-in-time mean field limit
$$\lim_{N\rightarrow\infty}\sup_{t\in [0,T]}W_2(\mu_t^N,\mu_t)=0,$$
for every $T>0$.
\end{pro}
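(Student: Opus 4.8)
The plan is to run a Dobrushin-type stability estimate along characteristics, in the spirit of \cite{P-19-arxiv}. By Corollary \ref{C-Lagrangian-solution}, both $\boldsymbol{\mu}$ and each $\boldsymbol{\mu}^N$ are Lagrangian, so that $\mu_t=Z(t;\cdot,\cdot)_\#\mu_0$ and $\mu_t^N=Z^N(t;\cdot,\cdot)_\#\mu_0^N$, where $Z(t;x,\omega)=(X(t;x,\omega),\omega)$ and $Z^N(t;x,\omega)=(X^N(t;x,\omega),\omega)$ are the forward-unique characteristic flows of $\boldsymbol{u}[\rho_t]$ and $\boldsymbol{u}[\rho_t^N]$, with $\rho_t=\pi_{x\#}\mu_t$ and $\rho_t^N=\pi_{x\#}\mu_t^N$ (for $\boldsymbol{\mu}^N$ this reduces to the classical particle system \eqref{kurap}). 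First I would record the uniform-in-$N$ bounds needed to make everything legitimate: since $W_2(\mu_0^N,\mu_0)\to 0$, the second-order moments of $\mu_0^N$ are bounded uniformly in $N$, and by the propagation of moments in Proposition \ref{W2cont}(i) (or directly from the ODEs \eqref{kurap}) this gives $\sup_N\sup_{t\in[0,T]}\int_{\mathbb{R}^{2d}}(|x|^2+|\omega|^2)\,d\mu_t^N<\infty$; together with the sublinear growth of $\boldsymbol{u}[\rho^N]$ in Corollary \ref{C-sided-Lipschitz}(iii), this yields global existence of $Z^N$ on $[0,T]$, a uniform-in-$N$ Gr\"onwall bound $|X^N(t;x,\omega)|\lesssim (1+|x|^2+|\omega|^2)^{1/2}$, and an $L^1$ domination legitimizing the differentiations below.

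Next I would fix an optimal transference plan $\gamma_0\in\Gamma(\mu_0^N,\mu_0)$ for $W_2$ and push it forward along the two flows to $\gamma_t:=(Z^N(t;\cdot,\cdot),Z(t;\cdot,\cdot))_\#\gamma_0\in\Gamma(\mu_t^N,\mu_t)$. Because the $\omega$-component is frozen along both flows, setting
$$e(t):=\int_{\mathbb{R}^{4d}}|X^N(t;x,\omega)-X(t;x',\omega')|^2\,d\gamma_0\big((x,\omega),(x',\omega')\big),$$
one gets $W_2^2(\mu_t^N,\mu_t)\le e(t)+\int_{\mathbb{R}^{4d}}|\omega-\omega'|^2\,d\gamma_0\le e(t)+W_2^2(\mu_0^N,\mu_0)$ and $e(0)\le W_2^2(\mu_0^N,\mu_0)$. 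Then I would differentiate $e$, insert $\dot X^N=\boldsymbol{u}[\rho_t^N](X^N,\omega)$, $\dot X=\boldsymbol{u}[\rho_t](X,\omega')$, and split
$$\nabla W*\rho_t(X)-\nabla W*\rho_t^N(X^N)=\big(\nabla W*\rho_t(X)-\nabla W*\rho_t(X^N)\big)+\big(\nabla W*\rho_t(X^N)-\nabla W*\rho_t^N(X^N)\big).$$
Pairing with $X^N-X$: the $\omega-\omega'$ term is absorbed by Young's inequality (producing $e(t)+W_2^2(\mu_0^N,\mu_0)$); the first bracket gives a \emph{nonpositive} contribution, since $x\mapsto\nabla W*\rho_t(x)$ is monotone by Lemma \ref{szwarc}(i); and the second bracket is controlled, after one more Young's inequality, by the uniform quantity $\sup_{x\in\mathbb{R}^d}|\nabla W*\rho_t^N(x)-\nabla W*\rho_t(x)|^2$. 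This yields a differential inequality of the form $\dot e(t)\le 2\,e(t)+W_2^2(\mu_0^N,\mu_0)+\sup_{x}|\nabla W*\rho_t^N(x)-\nabla W*\rho_t(x)|^2$.

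To close the argument I would bound the last supremum \emph{without} invoking Lemma \ref{L-unif-conv-u} (that would be circular) but by repeating its cut-off computation: for any $\varepsilon\in(0,1)$, splitting $\nabla W=\xi_\varepsilon\nabla W+(1-\xi_\varepsilon)\nabla W$ gives $\sup_{x}|\nabla W*\rho_t^N(x)-\nabla W*\rho_t(x)|\lesssim \varepsilon^{1-\alpha}+C_\varepsilon\,W_1(\rho_t^N,\rho_t)\le \varepsilon^{1-\alpha}+C_\varepsilon\,W_2(\mu_t^N,\mu_t)\le \varepsilon^{1-\alpha}+C_\varepsilon\big(\sqrt{e(t)}+W_2(\mu_0^N,\mu_0)\big)$. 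Substituting back produces $\dot e(t)\le C_\varepsilon'\,e(t)+C_\varepsilon'\big(W_2^2(\mu_0^N,\mu_0)+\varepsilon^{2-2\alpha}\big)$, so Gr\"onwall's lemma on $[0,T]$ gives $\sup_{t\in[0,T]}e(t)\le C(T,\varepsilon)\big(W_2^2(\mu_0^N,\mu_0)+\varepsilon^{2-2\alpha}\big)$, whence $\limsup_{N\to\infty}\sup_{t\in[0,T]}W_2^2(\mu_t^N,\mu_t)\lesssim C(T,\varepsilon)\,\varepsilon^{2-2\alpha}$, and letting $\varepsilon\to 0$ concludes. The main obstacle is precisely the non-Lipschitz (only $(1-\alpha)$-H\"older) character of $\nabla W$, which rules out a naive Gr\"onwall estimate and forces the $\varepsilon$-regularization, combined with the sign trick from the monotonicity of $\nabla W*\rho_t$ that keeps the estimate from degrading; the remaining effort is the routine bookkeeping to make all bounds uniform in $N$.
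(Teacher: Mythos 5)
There is a genuine gap at the very last step. You correctly set up the Dobrushin-type estimate with the coupling $\gamma_0$ pushed forward along the two flows, and the monotonicity of $x\mapsto \nabla W*\rho_t(x)$ indeed kills the first bracket. But after the $\varepsilon$-cut-off, your Gr\"onwall constant is $C_\varepsilon'\sim 1+C_\varepsilon^2\sim \varepsilon^{-2\alpha}$ (since $C_\varepsilon=[\xi_\varepsilon\nabla W]_{\rm Lip}\sim\varepsilon^{-\alpha}$), which makes $C(T,\varepsilon)\sim e^{T\varepsilon^{-2\alpha}}$. Hence the terminal bound $\limsup_{N\to\infty}\sup_{[0,T]}W_2^2(\mu_t^N,\mu_t)\lesssim C(T,\varepsilon)\,\varepsilon^{2-2\alpha}\sim e^{T\varepsilon^{-2\alpha}}\varepsilon^{2-2\alpha}$ \emph{diverges} as $\varepsilon\to 0$; it is not even decreasing in $\varepsilon$. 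The $\varepsilon$-regularization that closes cleanly in Lemma \ref{L-unif-conv-u} works there because $\sup_t W_2(\mu^n_t,\mu_t)\to 0$ is an \emph{assumption}, so $[\varphi_{0,\varepsilon}]_{\rm Lip}\sup_t W_2\to 0$ at fixed $\varepsilon$ without any feedback loop. Here the $\varepsilon$-dependent constant sits inside Gr\"onwall, and the exponential wins.

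The right way to kill the singular term is to avoid the pointwise decomposition and instead express \emph{both} $\nabla W*\rho_t^N(X^N)$ and $\nabla W*\rho_t(X)$ as integrals over the same coupling $\gamma_0$ of the initial data (using $\rho_t^N=X^N(t;\cdot,\cdot)_\#\mu_0^N$, $\rho_t=X(t;\cdot,\cdot)_\#\mu_0$), then symmetrize the resulting double integral by swapping the two pairs of integration variables and using the oddness of $\nabla W$. Writing $a=X^N(t;y,\sigma)$, $b=X(t;y',\sigma')$, $a'=X^N(t;z,\tau)$, $b'=X(t;z',\tau')$, the singular contribution to $\tfrac{1}{2}\dot e(t)$ becomes
\begin{equation*}
-\tfrac{1}{2}\iint \big[(a-a')-(b-b')\big]\cdot\big[\nabla W(a-a')-\nabla W(b-b')\big]\,d\gamma_0\,d\gamma_0\ \le\ 0,
\end{equation*}
by the convexity of $W$ (Lemma \ref{szwarc}(i)), with no remainder and no $\varepsilon$. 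This leaves only the $(\sigma-\sigma')$ term, yielding $\dot e(t)\le e(t)+W_2^2(\mu_0^N,\mu_0)$ with a constant uniform in $N$, and Gr\"onwall gives $\sup_{[0,T]}W_2^2(\mu_t^N,\mu_t)\lesssim e^T W_2^2(\mu_0^N,\mu_0)\to 0$. Your decomposition throws away exactly this cancellation, which is why the remainder ends up incompatible with a Gr\"onwall closure. (A quick check that you also cannot escape by a H\"older estimate of $\nabla W$: that would produce a sublinear term $\sim e(t)^{1-\alpha/2}$, which is non-Osgood and destroys stability.)
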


\bibliographystyle{amsplain} 
\bibliography{PP-22}

\providecommand{\bysame}{\leavevmode\hbox to3em{\hrulefill}\thinspace}
\providecommand{\MR}{\relax\ifhmode\unskip\space\fi MR }
\providecommand{\MRhref}[2]{%
  \href{http://www.ams.org/mathscinet-getitem?mr=#1}{#2}
}
\providecommand{\href}[2]{#2}
\begin{thebibliography}{10}

\bibitem{AKMO-20}
R.~Almeida, R.~Kamocki, A.~B. Malinowska, and T.~Odzijewicz, \emph{On the
  existence of optimal consensus control for the fractional {C}ucker-{S}male
  model}, Arch. Control Sci. \textbf{30(66)} (2020), no.~4, 625--651.

\bibitem{AC-14}
L.~Ambrosio and G.~Crippa, \emph{Continuity equations and {ODE} flows with
  non-smooth velocity}, Proc. Roy. Soc. Edinburgh Sect. A \textbf{144} (2014),
  no.~6, 1191--1244.

\bibitem{AGS-08}
L.~Ambrosio, N.~Gigli, and G.~Savar\'e, \emph{Gradient flows in metric spaces
  and in the space of probability measures}, Birkh{\"a}user, Basel, 2008.

\bibitem{BBCK-18}
R.~Bailo, M.~Bongini, J.~A. Carrillo, and D.~Kalise, \emph{Optimal consensus
  control of the {C}ucker-{S}male model}, IFAC-PapersOnLine \textbf{51} (2018),
  no.~13, 1--6, 2nd IFAC Conference on Modelling, Identification and Control of
  Nonlinear Systems MICNON 2018.

\bibitem{BH-17}
N.~Bellomo and S.-Y. Ha, \emph{A quest toward a mathematical theory of the
  dynamics of swarms}, Math. Models Methods Appl. Sci. \textbf{27} (2017),
  no.~4, 745--770.

\bibitem{BCC-11}
F.~Bolley, J.~A. Ca\~{n}izo, and J.~A. Carrillo, \emph{Stochastic mean-field
  limit: non-{L}ipschitz forces and swarming}, Math. Models Methods Appl. Sci.
  \textbf{21} (2011), no.~11, 2179--2210.

\bibitem{CC-21}
J.~A. Carrillo and Y.-P. Choi, \emph{Mean-field limits: from particle
  descriptions to macroscopic equations}, Arch. Ration. Mech. Anal.
  \textbf{241} (2021), no.~3, 1529--1573.

\bibitem{CCMP-17}
J.~A. Carrillo, Y.-P. Choi, P.~B. Mucha, and J.~Peszek, \emph{Sharp conditions
  to avoid collisions in singular {C}ucker-{S}male interactions}, Nonlinear
  Anal. Real World Appl. \textbf{37} (2017), 317--328.

\bibitem{CHL-17}
Y.-P. Choi, S.-Y. Ha, and Z.~Li, \emph{Emergent dynamics of the
  {C}ucker-{S}male flocking model and its variants}, Active particles. {V}ol.
  1. {A}dvances in theory, models, and applications, Model. Simul. Sci. Eng.
  Technol., Birkh{\"{a}}user, Cham, 2017, pp.~299--331.

\bibitem{CKPP-19}
Y.-P. Choi, D.~Kalise, J.~Peszek, and A.~A. Peters, \emph{A collisionless
  singular {C}ucker-{S}male model with decentralized formation control}, SIAM
  J. Appl. Dyn. Syst. \textbf{18} (2019), no.~4, 1954--1981.

\bibitem{CZ-21}
Y.-P. Choi and X.~Zhang, \emph{One dimensional singular {C}ucker-{S}male model:
  {U}niform-in-time mean-field limit and {c}ontractivity}, J. Differential
  Equations \textbf{287} (2021), 428--459.

\bibitem{CD-10}
F.~Cucker and J.-G. Dong, \emph{Avoiding collisions in flocks}, IEEE Trans.
  Automat. Control \textbf{55} (2010), no.~5, 1238--1243.

\bibitem{CS-07}
F.~Cucker and S.~Smale, \emph{Emergent behavior in flocks}, IEEE Trans.
  Automat. Control \textbf{52} (2007), no.~5, 852--862.

\bibitem{DMPW-19}
R.~Danchin, P.~B. Mucha, J.~Peszek, and B.~Wr\'{o}blewski, \emph{Regular
  solutions to the fractional {E}uler alignment system in the {B}esov spaces
  framework}, Math. Models Methods Appl. Sci. \textbf{29} (2019), no.~1,
  89--119.

\bibitem{DKRT-18}
T.~Do, A.~Kiselev, L.~Ryzhik, and C.~Tan, \emph{Global regularity for the
  fractional {E}uler alignment system}, Arch. Ration. Mech. Anal. \textbf{228}
  (2018), no.~1, 1--37.

\bibitem{D-21}
J.-G. Dong, \emph{Avoiding collisions and pattern formation in flocks}, SIAM J.
  Appl. Math. \textbf{81} (2021), no.~5, 2111--2129.

\bibitem{HHK-10}
S.-Y. Ha, T.~Ha, and J.-H. Kim, \emph{Asymptotic dynamics for the
  {C}ucker-{S}male-type model with the {R}ayleigh friction}, J. Phys. A
  \textbf{43} (2010), no.~31, 315201.

\bibitem{HKPZ-19}
S.-Y. Ha, J.~Kim, J.~Park, and X.~Zhang, \emph{Complete cluster predictability
  of the {C}ucker-{S}male flocking model on the real line}, Arch. Ration. Mech.
  Anal. \textbf{231} (2019), no.~1, 319--365.

\bibitem{HL-09}
S.-Y. Ha and J.-G. Liu, \emph{A simple proof of the {C}ucker-{S}male flocking
  dynamics and mean-field limit}, Commun. Math. Sci. \textbf{7} (2009), no.~2,
  297--325.

\bibitem{H-01}
J.~Heinonen, \emph{Lectures on {A}nalysis on {M}etric {S}paces}, Universitext,
  Springer, New York, 2001.

\bibitem{J-18}
C.~Jin, \emph{Flocking of the {M}otsch-{T}admor model with a cut-off
  interaction function}, J. Stat. Phys. \textbf{171} (2018), no.~2, 345--360.

\bibitem{KK-20}
M.-J. Kang and J.~Kim, \emph{Propagation of the mono-kinetic solution in the
  {C}ucker-{S}male-type kinetic equations}, Commun. Math. Sci. \textbf{18}
  (2020), no.~5, 1221--1231.

\bibitem{K-21}
J.~Kim, \emph{First-order reduction and emergent behavior of the
  one-dimensional kinetic {C}ucker-{S}male equation}, J. Differ. Equations
  \textbf{302} (2021), 496--532.

\bibitem{K-22}
\bysame, \emph{A {C}ucker-{S}male {F}locking {M}odel with the {H}essian
  {C}ommunication {W}eight and {I}ts {F}irst-{O}rder {R}eduction}, J. Nonlinear
  Sci. \textbf{32} (2022), 20.

\bibitem{L-18}
R.~Lassalle, \emph{Causal transference plans and their {M}onge-{K}antorovich
  problems}, Stoch. Anal. Appl. \textbf{36} (2018), no.~3, 452--484.

\bibitem{M-18}
I.~Markou, \emph{Collision-avoiding in the singular {C}ucker-{S}male model with
  nonlinear velocity couplings}, Discrete Contin. Dyn. Syst. Ser. A \textbf{38}
  (2018), no.~10, 5245--5260.

\bibitem{MMPZ-19}
P.~Minakowski, P.~B. Mucha, J.~Peszek, and E.~Zatorska, \emph{Singular
  {C}ucker--{S}male {D}ynamics}, Active Particles, Volume 2 (N.~Bellomo,
  P.~Degond, and E.~Tadmor, eds.), Modeling and Simulation in Science,
  Engineering and Technology, Birkh{\"a}user, Cham, 2019, pp.~201--243.

\bibitem{MPT-19}
J.~Morales, J.~Peszek, and E.~Tadmor, \emph{Flocking with short-range
  interactions}, J. Stat. Phys. \textbf{176} (2019), no.~2, 382--397.

\bibitem{MP-19-arxiv}
J.~Morales and D.~Poyato, \emph{On the trend to global equilibrium for
  {K}uramoto oscillators}, 2019, arXiv:1908.07657.

\bibitem{MP-18}
P.~B. Mucha and J.~Peszek, \emph{The {C}ucker-{S}male equation: singular
  communication weight, measure-valued solutions and weak-atomic uniqueness},
  Arch. Ration. Mech. Anal. \textbf{227} (2018), no.~1, 273--308.

\bibitem{PKH-10}
J.~Park, H.~J. Kim, and S.-Y. Ha, \emph{Cucker-{S}male flocking with
  inter-particle bonding forces}, IEEE Trans. Automat. Control \textbf{55}
  (2010), no.~11, 2617--2623.

\bibitem{PPS-21}
J.~Park, D.~Poyato, and J.~Soler, \emph{Filippov trajectories and clustering in
  the {K}uramoto model with singular couplings}, J. Eur. Math. Soc. \textbf{23}
  (2021), 3193--3278.

\bibitem{PGE-09}
L.~Perea, G.~G\'~omez, and P.~Elosegui, \emph{Extension of the {C}ucker-{S}male
  {C}ontrol {L}aw to {S}pace {F}light {F}ormations}, J. Guid. Control Dyn.
  \textbf{32} (2009), no.~2, 527--537.

\bibitem{P-14}
J.~Peszek, \emph{Existence of piecewise weak solutions of a discrete
  {C}ucker-{S}male's flocking model with a singular communication weight}, J.
  Differ. Equations \textbf{257} (2014), no.~8, 2900--2925.

\bibitem{P-15}
\bysame, \emph{Discrete {C}ucker--{S}male {F}locking {M}odel with a {W}eakly
  {S}ingular {W}eight}, SIAM J. Math. Anal. \textbf{47} (2015), no.~5,
  3671--3686.

\bibitem{PP-22-1-arxiv}
J.~Peszek and D.~Poyato, \emph{Heterogeneous gradient flows in the topology of
  fibered optimal transport}, 2022, arXiv:2203.08104.

\bibitem{PP-12}
G.~C. Pflug and A.~Pichler, \emph{A distance for multistage stochastic
  optimization models}, SIAM J. Optim. \textbf{22} (2012), no.~1, 1--23.

\bibitem{PP-14}
\bysame, \emph{Multistage stochastic optimization}, Springer Series in
  Operations Research and Financial Engineering, Springer, Cham, 2014.

\bibitem{P-19-arxiv}
D.~Poyato, \emph{Filippov flows and mean-field limits in the kinetic singular
  {K}uramoto model}, 2019, arXiv:1903.01305.

\bibitem{ST-21}
R.~Shu and E.~Tadmor, \emph{Anticipation {B}reeds {A}lignment}, Arch. Rational
  Mech. Anal. \textbf{240} (2021), 203--241.

\bibitem{ST-17}
R.~Shvydkoy and E.~Tadmor, \emph{Eulerian dynamics with a commutator forcing},
  Trans. Math. Appl. \textbf{1} (2017), no.~1, 26.

\bibitem{ST-18}
\bysame, \emph{Eulerian dynamics with a commutator forcing {III}. {F}ractional
  diffusion of order {$0<\alpha<1$}}, Phys. D \textbf{376/377} (2018),
  131--137.

\bibitem{V-09}
C.~Villani, \emph{Optimal transport: old and new}, Grundlehren der
  mathematischen Wissenschaften, vol. 338, Springer, Berlin, Heidelberg, 2009.

\bibitem{YYC-20}
X.~Yin, D.~Yue, and Z.~Chen, \emph{Asymptotic behavior and collision avoidance
  in the {C}ucker-{S}male model}, IEEE Trans. Automat. Control \textbf{65}
  (2020), no.~7, 3112--3119.

\bibitem{ZZ-20}
X.~Zhang and Z.~Zhu, \emph{Complete classification of the asymptotical behavior
  for singular {C-S} model on the real line}, J. Differ. Equ. \textbf{269}
  (2020), no.~1, 201--256.

\end{thebibliography}

\end{document}